\documentclass[3p]{elsarticle}
\usepackage{amsmath,amsthm}
\usepackage{amssymb}
\usepackage{amsbsy}
\usepackage{mathrsfs}
\usepackage{tikz}
\usepackage{pgfplots}
\usepackage{cases}
\usepackage{hyperref} 
\usepackage{comment}
\usepackage{ulem}
\usepackage{subcaption}
\usepackage{booktabs}
\usepackage{soul,cancel}

\newtheorem{theorem}{Theorem}
\newtheorem{lemma}[theorem]{Lemma}
\newdefinition{remark}{Remark}
\newdefinition{hyp}{Hypothesis}
\newdefinition{ass}{Assumption}
\newtheorem{algorithm}{Algorithm}[section]

\def\nuplus{\nu^{+}}
\def\numinus{\nu^{-}}
\def\pplus{p^{+}}
\def\pminus{p^{-}}
\def\fplus{\mathbf{f}^{+}}
\def\fminus{\mathbf{f}^{-}}
\def\phi{{\varphi}}
\def\V#1{\mathbf{#1}}
\def\Vt#1{\mathbf{\tilde{#1}}}
\def\VX{\mathbf{X}}
\newcommand{\Mbd}{\mathcal{M}}
\newcommand{\Nbd}{\mathcal{N}}
\newcommand{\divergence}{\mathrm{\nabla \cdot}}

\newcommand{\Lumap}{\mathrm{T_1}}
\newcommand{\Luinvmap}{\mathrm{T_1^{-1}}}
\newcommand{\Lbmap}{\mathrm{T_2}}
\newcommand{\Lbinvmap}{\mathrm{T_2^{-1}}}

\def\prodL2#1{\left(#1\right)}

\newcommand{\normBoch}[1]{{\left\vert\kern-0.25ex\left\vert\kern-0.25ex\left\vert #1 
\right\vert\kern-0.25ex\right\vert\kern-0.25ex\right\vert}}

\newcommand{\DY}[1]{{\color{black}{#1}}}
\newcommand{\AT}[1]{{\color{black}{#1}}}

\journal{Communications in Nonlinear Science and Numerical Simulation}

\begin{document}
\begin{frontmatter}

\title{Decoupled iterative schemes for solving stationary MHD problems} 

\author[labelA]{Aziz Takhirov} 
\ead{atakhirov@sharjah.ac.ae}
\address[labelA]{University of Sharjah, UAE}
\author[labelB]{Driss Yakoubi}
\ead{driss.yakoubi@devinci.fr}
\address[labelB]{De Vinci Higher Education, De Vinci Research Center, Paris, France.}


\begin{abstract}
We develop a novel iterative approach for solving the incompressible magnetohydrodynamics problem. The main idea is to split the velocity-momentum and magnetic induction equations with respect to the diffusive terms, as in \cite{Yak2023}. As a result, we get a smaller system that is iteration-level-dependent, along with two Stokes systems that need to be assembled only once. We also extended the scheme to the Els{\"a}sser variables reformulation of the equations. For both schemes, we established boundedness and convergence. Several numerical experiments are presented to show the effectiveness of the schemes.
\end{abstract}

\begin{keyword}
incompressible magnetohydrodynamics; steady-state; viscosity splitting; Els{\"a}sser variables;
\end{keyword}

\end{frontmatter}

\section{Introduction}\label{sec:Intro}
 Magnetohydrodynamics (MHD) describes the interaction between electrically conducting fluids and the electromagnetic fields. In an MHD system, the coupling between the flow and the electromagnetic field occurs through two mechanisms. Firstly, the electric current and the magnetic field generate the Lorentz force that acts on the flow. Secondly, the flow of conducting fluid itself induces a magnetic field, thereby altering the applied field. The MHD system has many practical applications, such as industrial liquid metals \cite{gerbeau2006mathematical}, metallurgy \cite{Davidson_2001}, dynamo \cite{Nore_2016}, etc.
 
 Mathematically, the stationary MHD system can be written as follows:
\begin{align}
- \nu\Delta \V{u} 
+ \V{u} \cdot \nabla \V{u} - \kappa \V{B}\cdot \nabla \V{B} - \gamma \nabla \divergence \V{u} + \nabla p  &= \V{f}, \label{eq:MHD_U} \\
\nabla \cdot \V{u} &= 0, \label{eq:MHD_Umass} \\
- \mu\Delta \V{B} + \V{u} \cdot \nabla \V{B} - \V{B} \cdot \nabla \V{u} - \gamma \nabla \divergence \V{B} + \nabla \lambda  &= \V{g}, \label{eq:MHD_B} \\
\nabla \cdot \V{B} &= 0, \label{eq:MHD_Bmass}
\end{align}
where $\V{u}$ is the velocity of the fluid, $\V{B}$ is the magnetic field, $p:=\hat{p}+\kappa\frac{|\V{B}|^2}{2}$ is the modified pressure with $\hat{p}$ being the pressure, $\lambda $ is a variable acting as a Lagrange multiplier corresponding to the solenoidal constraint on the magnetic field, $\V{f}$ is the body forcing, $ \V{g}$ is the divergence-free forcing on the magnetic field $\V{B}$, $\kappa$ is a coupling number, $\nu$ is the kinematic viscosity, and $\mu$ is the magnetic diffusivity. 
For simplicity, we consider homogeneous Dirichlet boundary conditions for both $\V{u}$ and $\V{B}$
\begin{equation}\label{BC}
\V{u} = \V{0} \quad \text{and } \quad \V{B}=\V{0} \quad \text{on } \; \partial \Omega.
\end{equation} 
The $\gamma$ terms in \eqref{eq:MHD_U}-\eqref{eq:MHD_Bmass} are the grad-div stabilization terms, which vanish for the continuous solution. They are well-known for enhancing and improving the numerical approximations of incompressible flows in various contexts, cf. \cite{https://doi.org/10.1002/fld.3654,GEREDELI2023114920,JenJohLinReb2014,OlsReu2004}.

With appropriate changes, our analysis can also be extended for no-slip velocity conditions together with $\V{B} \cdot \V{n} = 0$  and  $ (\nabla \times \V{B}) \times \V{n} = 0$  (in this case, the Maxwell equation uses the curl-curl form of the dissipation term).

The steady MHD system has been the subject of many studies. Among the earliest ones, in \cite{Gunzburger1991523}, Gunzburger et. al. studied the well-posedness for the continuous and discrete problem with nonhomogeneous boundary conditions. Moreover, they analyzed three linearization methods: the fully explicit method, Newton's method, and the semi-implicit Oseen's method. In \cite{XU2022105372}, the authors considered two-grid Newton's method with nonconforming finite element spaces. \cite{GREIF20102840} studied a DG-based mixed method, where the linearization was performed via the Picard iterations. \cite{Mengying2021} developed an algebraic splitting method, in the spirit of \cite{rebholz2019efficient}, where the Schur complement matrix is SPD and independent of iteration level. Yang et. al. \cite{YANG2019347} extended the Arrow-Hurwicz method of \cite{chen17} to stationary MHD equations. Due to the lagging of some dissipative terms, approaches of \cite{YANG2019347,chen17}  usually entail complicated assumptions on the scheme parameters. On the other hand, an improved version of the Arrow-Hurwicz method was developed in \cite{TC23,Calcolo2026}. Finally, we mention the article \cite{DONG2014287}, where, similar to the work of \cite{Gunzburger1991523}, three iterative methods of Stokes-, Newton-, and Oseen-type were investigated.

The goal of this paper is to present two iterative schemes inspired by the Incremental Viscosity Splitting scheme for unsteady Navier-Stokes \cite{Yak2023} flows. The idea has already been successfully applied to steady Navier-Stokes equations \cite{SIVS_Takhirov2026}. In this approach, the equations are split with respect to the viscous and resistive terms, respectively. This results in a PDE system where the intermediate velocity and magnetic fields must be solved for simultaneously, and then the end-of-step variables are obtained by solving two linear Stokes systems. These Stokes systems result in SPD Schur complement matrices that are assembled and preconditioned only once. Unlike the Arrow-Hurwicz scheme \cite{TC23}, IVS schemes still require a solution of a mixed problem. However, a gain is that the end-of-step fields are (discretely) divergence-free.

This paper is organized as follows. Section \ref{sec:Notations} presents the notations. The next Section \ref{sec:Existence} presents some well-posedness results for the MHD system \eqref{eq:MHD_U}-\eqref{eq:MHD_Bmass}. Afterwards, in Section \ref{sec:Picard}, we recall the properties of the classical Picard iterative scheme. Our novel schemes are presented in Sections \ref{sec:SIVS} and \ref{sec:SIVS_Els}, while the section \ref{sec:Numerics} is dedicated to numerical experiments. The last Section then concludes the manuscript.
\section{Notations and preliminaries}\label{sec:Notations}
Throughout this work, vector fields and spaces are denoted using boldface notation.
\noindent Standard notations for Sobolev spaces and corresponding norms will be used throughout the paper, see e.g., \cite{Ada1975}. In particular, $(\cdot,\cdot)$ and $\|\cdot \|$ denote $L^2(\Omega)$ inner product and the corresponding norm, respectively. $\V{H}^{k}$, where $k$ is an integer greater than zero, will denote the space of vector-valued functions each of whose $n$ components belong to $H^k$, the Sobolev space of real-valued functions with square integrable derivatives of order up to $k$ equipped with the usual norm $\|\cdot\|_k$. The dual space of $\V{H}^{1}_0(\Omega)$ will be denoted by $\V{H}^{-1}$, and the duality pairing between these two spaces is denoted by $\langle \cdot,\cdot \rangle$. The norm in 
$\V{H}^{-1}$ is given by $\|\V{f} \|_{-1} = \langle \V{f}, (-\Delta )^{-1}\V{f} \rangle^{1/2}.$

The equivalent weak formulation of \eqref{eq:MHD_U}-\eqref{eq:MHD_Bmass} reads as follows: $\forall (\V{v},q,\V{S},\eta)\in(\V{X},Q)^2 $, find $(\V{u},p,\V{B},\lambda)\in (\V{X},Q)^2$ satisfying 
\begin{align}
a_1(\V{u}, \V{v})+c^*(\V{u},\V{u},\V{v})-\kappa c^*(\V{B},\V{B},\V{v})+b(p,\V{v}) & = \left<\V{f},\V{v} \right>, \label{eq:exact1}\\
b(q,\V{u}) & = 0, \label{eq:exact2} \\
a_2(\V{B}, \V{S})+c^*(\V{u},\V{B},\V{S})-c^*(\V{B},\V{u},\V{S})+b(\lambda,\V{S}) & = \left<\V{g},\V{S} \right>, \label{eq:exact3} \\
b(\eta,\V{B}) & = 0, \label{eq:exact4}
\end{align}
where $\V{X}:= \V{H}_0^1(\Omega), \; Q:=L_0^2(\Omega)$ and 
\begin{align*}
a_1(\V{u}, \V{v}) & = \nu(\nabla \V{u}, \nabla \V{v}) + \gamma (\nabla \cdot \V{u}, \nabla \cdot \V{v}), \\
b(p, \V{v}) & = -(p,\nabla\cdot \V{v}), \\
a_2(\V{B}, \V{S}) & = \mu(\nabla \V{B}, \nabla \V{S}) + \gamma (\nabla \cdot \V{B}, \nabla \cdot \V{S}), \\
c(\V{u},\V{v},\V{w}) &= ((\V{u}\cdot \nabla)\V{v},\V{w}), \\
c^* (\V{u},\V{v},\V{w}) &= c(\V{u},\V{v},\V{w})+\frac{1}{2}\left({(\nabla \cdot \V{u})}\V{v}, \V{w} \right).
\end{align*}
The following bound holds for all $\V{u},\V{v}, \V{w}\in \V{X}$, see for instance,
\cite{GirRav1986,Tem1979}:
\begin{eqnarray}
c(\V{u},\V{v},\V{w})\leq \mathcal{M}_0 \|\nabla \V{u}\|\|\nabla \V{v}\|\|\nabla \V{w}\| \text{ and } c^*(\V{u},\V{v},\V{w})\leq \mathcal{M} \|\nabla \V{u}\|\|\nabla \V{v}\| \|\nabla \V{w}\|, \label{eq:nnl}
\end{eqnarray}
for some $\Mbd_0, \Mbd =\mathcal{O}(1)$. 
The assumption on $\Omega$ is sufficient to ensure that the inf--sup (or Ladyzhenskaya--Babu\v{s}ka--Brezzi, LBB) condition holds (see~\cite{BofBreFor2013,BoyFab2013,GirRav1986}): $\exists \beta=\beta(\Omega) > 0$ such that
\begin{equation*}
\inf_{q \in Q} \;
\sup_{\V{v} \in \V{X}}
\frac{b(q,\V{v})}{\|q\|\, \|\nabla \V{v}\|}
\geq \beta.
\end{equation*} 
We also define the div-free subspace of $\V{X}$:
\begin{equation*}
    \V{V}: = \{ \V{v} \in \V{X}:\;  b(q,\V{v})=0 \; \forall q\in Q\} = \{ \V{v} \in \V{X}:\; \nabla \cdot  \V{v}=0 \; \text{in} \; \Omega\},
\end{equation*}
For operators 
\begin{align*}
    \Lumap:= -\nu \Delta - \gamma \nabla \divergence:\; \V{X}\rightarrow \V{H}^{-1} , \, 
    \Lbmap:= -\mu \Delta - \gamma \nabla \divergence:\; \V{X}\rightarrow \V{H}^{-1},
\end{align*}
associated with the bilinear forms $a_1(\cdot,\cdot)$ and $a_2(\cdot,\cdot)$, respectively, we define:
\begin{equation}
    \label{eq:Tnorm}
    \begin{aligned}
    \|\V{g}\|_{\Lumap}: = \sqrt{\langle \Lumap \V{g},\V{g} \rangle} &\text{ and } 
    \|\V{f}\|_{\Luinvmap}: = \sqrt{\langle \Luinvmap \V{f},\V{f} \rangle}, \\
    \|\V{g}\|_{\Lbmap}: = \sqrt{\langle \Lbmap \V{g},\V{g} \rangle} &\text{ and } 
    \|\V{f}\|_{\Lbinvmap}: = \sqrt{\langle \Lbinvmap \V{f},\V{f} \rangle}.
    \end{aligned}    
\end{equation}
%
%
The following norm equivalences can be easily verified:
\begin{lemma}\cite{SIVS_Takhirov2026}
The following inequalities hold:
\begin{equation}
\begin{aligned}
\forall \V{v} \in \V{X}, \, \sqrt{\nu}\|\nabla \V{v}\| \le \| \V{v} \|_\Lumap \le \sqrt{\nu+\gamma} \|\nabla \V{v}\|, \, 
\sqrt{\mu}\|\nabla \V{v}\| \le \| \V{v} \|_\Lbmap \le \sqrt{\mu+\gamma} \|\nabla \V{v}\|, \\
\text{ and } \forall \V{f} \in \V{H}^{-1}, \, 
 \frac{1}{\sqrt{\nu+\gamma}}\| \V{f} \|_{-1} \le \| \V{f} \|_\Luinvmap \le \frac{1}{\sqrt{\nu}} \| \V{f} \|_{-1}, \, 
  \frac{1}{\sqrt{\mu+\gamma}}\| \V{f} \|_{-1} \le \| \V{f} \|_\Lbinvmap \le \frac{1}{\sqrt{\mu}} \| \V{f} \|_{-1}.
\end{aligned}
    \label{eq:NormEquiv}
\end{equation}    
\end{lemma}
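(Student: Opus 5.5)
The plan is to prove the first line directly from the definitions, and to get the second line by a duality argument in which the inverse operators and the first line do the work. Only $\Lumap$ is treated; the $\Lbmap$ case is identical with $\nu$ replaced by $\mu$.

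\textbf{Primal bounds.} For $\V{v}\in\V{X}$, the definition gives $\|\V{v}\|_{\Lumap}^2=\langle \Lumap\V{v},\V{v}\rangle = a_1(\V{v},\V{v})=\nu\|\nabla\V{v}\|^2+\gamma\|\nabla\cdot\V{v}\|^2$. The lower bound follows by dropping the nonnegative $\gamma$ term. For the upper bound, we use the standard identity on $\V{H}^1_0$,
\[
\|\nabla\V{v}\|^2=\|\nabla\cdot\V{v}\|^2+\|\nabla\times\V{v}\|^2 ,
\]
proved for smooth compactly supported fields by integration by parts and extended by density. It gives $\|\nabla\cdot\V{v}\|\le\|\nabla\V{v}\|$, and hence $\|\V{v}\|_{\Lumap}^2\le(\nu+\gamma)\|\nabla\V{v}\|^2$.

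\textbf{Dual bounds.} By Lax--Milgram, $\Lumap$ is an isomorphism from $\V{X}$ onto $\V{H}^{-1}$, since $a_1$ is bounded, symmetric and coercive. So $\Luinvmap$ is well defined, and the plan is to use the variational characterization of a dual norm. For $\V{f}\in\V{H}^{-1}$, set $\V{w}=\Luinvmap\V{f}$. Then $\|\V{f}\|_{\Luinvmap}^2=\langle\V{f},\V{w}\rangle=a_1(\V{w},\V{w})=\|\V{w}\|_{\Lumap}^2$. Since $a_1$ is an inner product on $\V{X}$, Cauchy--Schwarz in it yields
\[
\|\V{f}\|_{\Luinvmap}=\sup_{\V{v}\in\V{X}\setminus\{0\}}\frac{\langle\V{f},\V{v}\rangle}{\|\V{v}\|_{\Lumap}},
\]
with the supremum attained at $\V{v}=\V{w}$. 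The same reasoning for $-\Delta$ with the paper's definition of $\|\cdot\|_{-1}$ gives
\[
\|\V{f}\|_{-1}=\sup_{\V{v}\in\V{X}\setminus\{0\}}\frac{\langle\V{f},\V{v}\rangle}{\|\nabla\V{v}\|}.
\]

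\textbf{Combining.} Now substitute the primal bounds $\sqrt{\nu}\|\nabla\V{v}\|\le\|\V{v}\|_{\Lumap}\le\sqrt{\nu+\gamma}\|\nabla\V{v}\|$ into the denominators of these two suprema. The lower bound on $\|\V{v}\|_{\Lumap}$ bounds the $\Luinvmap$ supremum from above, giving $\|\V{f}\|_{\Luinvmap}\le \nu^{-1/2}\|\V{f}\|_{-1}$. The upper bound on $\|\V{v}\|_{\Lumap}$ gives $\|\V{f}\|_{\Luinvmap}\ge(\nu+\gamma)^{-1/2}\|\V{f}\|_{-1}$.

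\textbf{Main obstacle.} The only nontrivial step is the inequality $\|\nabla\cdot\V{v}\|\le\|\nabla\V{v}\|$ with constant exactly $1$. A crude Cauchy--Schwarz estimate would only give $\sqrt{d}$, so the identity above is essential. Everything else is routine duality.
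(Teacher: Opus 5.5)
The paper does not prove this lemma. It only says the equivalences ``can be easily verified'' and cites \cite{SIVS_Takhirov2026}, so there is no argument of the authors to compare yours with. Your proof is correct and complete, and it follows the natural route. The primal bounds come from $a_1(\V{v},\V{v})=\nu\|\nabla\V{v}\|^2+\gamma\|\divergence\V{v}\|^2$ together with $\|\divergence\V{v}\|\le\|\nabla\V{v}\|$ on $\V{H}^1_0(\Omega)$. The dual bounds say that inversion reverses the form ordering $\nu(-\Delta)\le\Lumap\le(\nu+\gamma)(-\Delta)$, which is exactly what your variational characterizations of $\|\cdot\|_{\Luinvmap}$ and $\|\cdot\|_{-1}$ establish. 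You are right that the constant-one divergence inequality is the only point needing care.

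Three small details are worth stating explicitly when you write it up:
\begin{itemize}
\item When you insert the bounds on $\|\V{v}\|_{\Lumap}$ into the denominators, the inequalities point the right way only where $\langle\V{f},\V{v}\rangle\ge 0$. Since $\V{v}\mapsto-\V{v}$ leaves the denominators unchanged, both suprema may be restricted to such $\V{v}$, and then the comparison is legitimate.
\item The identity you invoke uses the vanishing trace of $\V{v}$. In arbitrary dimension the same double integration by parts gives $\|\nabla\V{v}\|^2=\|\divergence\V{v}\|^2+\tfrac12\|\nabla\V{v}-(\nabla\V{v})^{T}\|^2$, which avoids having to interpret the curl in $d=2$ versus $d=3$.
\item The lower primal bound needs $\gamma\ge 0$. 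This is implicit in the paper but should be stated.
\end{itemize}
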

We also equip the product space $\V{X}^2$ with a product norm:
\begin{equation}
    \label{eq:ProdNorm}
    \| (\V{u},\V{B})\|_\VX: = \sqrt{\nu \| \nabla \V{u}\|^2 + \kappa \mu \| \nabla \V{B}\|^2}.
\end{equation}
Next, we state two preliminary lemmas on non-negative sequences that will be used in the sequel, taken from \cite{Calcolo2026}.
\begin{lemma}[Sequences converging to $0$]\label{lem:SeqConv0}
 Assume that $\{a_k\}_{k=1}^\infty, \{b_k\}_{k=1}^\infty, \{c_k\}_{k=1}^\infty$ are non-negative sequences of real numbers and $\exists \, \omega_i$, $\varepsilon_i$, $i=\overline{1,2}$, such that $0<\varepsilon_i \le \omega_i$ and 
 \begin{equation*}
  \omega_1 a_{k+1} + \omega_2 b_{k+1} + c_{k+1} \leq \left(\omega_1-\varepsilon_1 \right) a_k + \left(\omega_2 - \varepsilon_2 \right) b_{k} + c_{k}.
 \end{equation*}
Then $\exists \, C \ge 0$ such that 
\begin{align*}
 \lim\limits_{n \rightarrow \infty} \left(a_k,b_k,c_k\right) & = (0,0,C).
\end{align*}
\end{lemma}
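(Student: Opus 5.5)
Set $E_k := \omega_1 a_k + \omega_2 b_k + c_k \ge 0$. The hypothesis gives
\begin{equation*}
E_{k+1} + \varepsilon_1 a_k + \varepsilon_2 b_k \le E_k ,
\end{equation*}
so $\{E_k\}$ is non-increasing and bounded below by $0$. It therefore converges to some limit $E_\infty \ge 0$. The proof is then a telescoping argument followed by identifying the limit of $c_k$.

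\textbf{Step 1: summability of $a_k$ and $b_k$.} Summing the inequality from $k=1$ to $N$ gives
\begin{equation*}
E_{N+1} + \sum_{k=1}^{N} \left(\varepsilon_1 a_k + \varepsilon_2 b_k\right) \le E_1 .
\end{equation*}
Hence $\sum_k \left(\varepsilon_1 a_k + \varepsilon_2 b_k\right) \le E_1 < \infty$. Since $\varepsilon_i>0$ and all terms are non-negative, both series $\sum a_k$ and $\sum b_k$ converge. In particular $a_k \to 0$ and $b_k \to 0$.

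\textbf{Step 2: convergence of $c_k$.} We have $c_k = E_k - \omega_1 a_k - \omega_2 b_k$. Each term on the right converges: $E_k \to E_\infty$, while $a_k, b_k \to 0$ with $\omega_i$ fixed. Therefore $c_k \to E_\infty =: C$. Because every $c_k \ge 0$, the limit satisfies $C \ge 0$. This gives $\lim_{k\to\infty}(a_k,b_k,c_k) = (0,0,C)$.

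\textbf{Main obstacle.} There is essentially none. The only point needing care is that $c_k$ itself is not assumed monotone. Its convergence therefore has to come from the monotonicity of the combined quantity $E_k$ together with the decay of $a_k$ and $b_k$, rather than from a separate monotonicity argument. The condition $\varepsilon_i \le \omega_i$ guarantees that the coefficients $\omega_i-\varepsilon_i$ on the right-hand side are non-negative. The argument above never actually uses this, because non-negativity of $E_k$ follows directly from that of $a_k$, $b_k$, $c_k$.
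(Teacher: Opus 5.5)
Your proof is correct and complete. The paper gives no proof of this lemma; it imports it from \cite{Calcolo2026}. Your argument is the natural one: $E_k=\omega_1a_k+\omega_2b_k+c_k$ is non-increasing, telescoping gives $\sum_k(\varepsilon_1a_k+\varepsilon_2b_k)\le E_1$, and then $c_k=E_k-\omega_1a_k-\omega_2b_k\to\lim_k E_k$.

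One correction to your closing remark. The hypothesis $\varepsilon_i\le\omega_i$ is in fact used. Together with $\varepsilon_i>0$, it is the only thing in the statement that guarantees $\omega_i>0$. You need $\omega_i\ge 0$ for $E_k\ge 0$, which is what gives the bound $E_{N+1}\ge 0$ in Step~1 and makes $E_k$ bounded below.

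Without it the lemma fails. Take $\omega_1=-1$, $\varepsilon_1=1$, $\omega_2=\varepsilon_2=1$ and $b_k=c_k=0$. The hypothesis then reads $a_{k+1}\ge 2a_k$, which $a_k=2^k$ satisfies. Your intuition is right only in the weaker sense that $\omega_i\ge 0$ suffices, rather than the full inequality $\varepsilon_i\le\omega_i$.
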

\begin{lemma}[Contractivity of sequences converging to $0$]\label{ContractivitySequence}
Assume that $\{a_k\}_{k=1}^\infty, \{b_k\}_{k=1}^\infty, \{c_k\}_{k=1}^\infty$ are non-negative sequences of real numbers and $\exists \, \omega_i$, $i=\overline{1,3}$, $\varepsilon_i$, $i=\overline{1,2}$, such that $0<\varepsilon_i \le \omega_i$, $i=1,2$, 
 \begin{equation}
  \omega_1 a_{k+1} + \omega_2 b_{k+1} + \omega_3 c_{k+1} \leq \left(\omega_1-\varepsilon_1 \right) a_k + \left(\omega_2 - \varepsilon_2 \right) b_{k} + \omega_3 c_{k}
  \label{eq:Contract1}
 \end{equation}
 and 
 \begin{equation}
 c_{k} \le \tau_1 a_{k+1} + \tau_2 a_{k} + \tau_3 b_{k+1} \text{ for some positive } \tau_i, i=\overline{1,3}.
 \label{eq:Contract2}
 \end{equation}
Then there exists a sequence that is a linear combination of $a_{k}, b_{k}, c_{k}$ and is contracting towards $0$.
\end{lemma}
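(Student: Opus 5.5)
We need to prove Lemma \ref{ContractivitySequence}: from \eqref{eq:Contract1} and \eqref{eq:Contract2}, find a linear combination $S_k = \alpha a_k + \beta b_k + \gamma' c_k$ (or, more flexibly, one involving shifted indices) with $S_{k+1}\le \rho S_k$ for some $\rho<1$.

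The key is to use the dissipation terms $\varepsilon_1 a_k+\varepsilon_2 b_k$ to control a small multiple of $c_k$, which otherwise carries no decay. Rewrite \eqref{eq:Contract1} as
\begin{equation*}
E_{k+1} + \varepsilon_1 a_k + \varepsilon_2 b_k \le E_k, \qquad E_k:=\omega_1 a_k+\omega_2 b_k+\omega_3 c_k .
\end{equation*}
The trouble is that \eqref{eq:Contract2} bounds $c_k$ by $a_{k+1}$ and $b_{k+1}$, i.e.\ by quantities at the \emph{next} level, rather than by $a_k,b_k$. I therefore plan to work with the shifted functional
\begin{equation*}
S_k:= E_k + \theta\,(\omega_1 a_{k}+\omega_2 b_{k})
\end{equation*}
with $\theta>0$ to be chosen. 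An equivalent and cleaner route is to add $\delta$ times \eqref{eq:Contract2} directly to \eqref{eq:Contract1}, for a small $\delta>0$:
\begin{equation*}
\omega_1 a_{k+1}+\omega_2 b_{k+1}+\omega_3 c_{k+1} + \delta c_k \le (\omega_1-\varepsilon_1+\delta\tau_2) a_k + (\omega_2-\varepsilon_2) b_k + \omega_3 c_k + \delta\tau_1 a_{k+1}+\delta\tau_3 b_{k+1}.
\end{equation*}
Moving the $a_{k+1},b_{k+1}$ terms to the left gives
\begin{equation*}
(\omega_1-\delta\tau_1) a_{k+1}+(\omega_2-\delta\tau_3) b_{k+1}+\omega_3 c_{k+1} \le (\omega_1-\varepsilon_1+\delta\tau_2) a_k+(\omega_2-\varepsilon_2) b_k+(\omega_3-\delta) c_k .
\end{equation*}
Define $F_k:=(\omega_1-\delta\tau_1)a_k+(\omega_2-\delta\tau_3)b_k+\omega_3 c_k$. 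For the left side I would use $F_{k+1}$ itself. For the right side, I compare it coefficientwise with $\rho F_k$. This requires
\begin{equation*}
\omega_1-\varepsilon_1+\delta\tau_2\le \rho(\omega_1-\delta\tau_1),\qquad \omega_2-\varepsilon_2\le\rho(\omega_2-\delta\tau_3),\qquad \omega_3-\delta\le\rho\,\omega_3 .
\end{equation*}
Take $\delta$ small enough that $\delta(\tau_1+\tau_2)<\varepsilon_1/2$ and $\delta\tau_3<\varepsilon_2/2$ (and $\delta<\omega_3$). Then
\begin{equation*}
\rho:=\max\Big\{\frac{\omega_1-\varepsilon_1/2}{\omega_1},\ \frac{\omega_2-\varepsilon_2/2}{\omega_2},\ 1-\frac{\delta}{\omega_3}\Big\}<1
\end{equation*}
should work. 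Check the first ratio: $\omega_1-\varepsilon_1+\delta\tau_2\le \rho\omega_1-\delta\tau_1$ holds because $\rho\omega_1\ge\omega_1-\varepsilon_1/2$ and $\delta(\tau_1+\tau_2)<\varepsilon_1/2$, using $\rho\le1$ to bound $\rho\delta\tau_1\le\delta\tau_1$. The second follows the same way. Nonnegativity of all sequences ensures these coefficient comparisons translate into $F_{k+1}\le\rho F_k$.

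All weights of $F_k$ are positive, so $F_k$ is a linear combination of $a_k,b_k,c_k$ that is contracting to $0$, and hence $a_k,b_k,c_k\to0$ geometrically.

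The main obstacle is the index mismatch in \eqref{eq:Contract2}. Adding $\delta\times$\eqref{eq:Contract2} and absorbing the next-level terms into the left side handles this, at the cost of slightly reduced left weights. Making the constant choices consistent is the only delicate point. If $\omega_3=0$ the statement degenerates, so I assume $\omega_3>0$ as implicit.
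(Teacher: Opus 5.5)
Your proof is correct. Adding $\delta$ times \eqref{eq:Contract2} to \eqref{eq:Contract1} and moving $\delta\tau_1 a_{k+1}$ and $\delta\tau_3 b_{k+1}$ to the left gives exactly your inequality. Under $\delta(\tau_1+\tau_2)<\varepsilon_1/2$, $\delta\tau_3<\varepsilon_2/2$ and $\delta<\omega_3$, three things then hold:
\begin{itemize}
\item the three coefficient comparisons hold for your $\rho<1$;
\item the weights of $F_k$ are positive, e.g.\ $\omega_1-\delta\tau_1>\omega_1-\varepsilon_1/2\ge\omega_1/2$;
\item nonnegativity of the sequences turns the comparisons into $F_{k+1}\le\rho F_k$, hence geometric decay of each of $a_k,b_k,c_k$.
\end{itemize}
The paper does not prove this lemma; it is quoted from \cite{Calcolo2026}. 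So there is no in-text argument to compare against, and yours is a complete, self-contained proof with an explicit rate.

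A few minor remarks follow.
\begin{itemize}
\item The preliminary functional $S_k$ is not equivalent to $F_k$ and is never used, so drop it.
\item Your second coefficient comparison genuinely uses $\varepsilon_2>0$, and this hypothesis cannot be removed. The choice $a_k=c_k=0$, $b_k\equiv1$ satisfies \eqref{eq:Contract1} and \eqref{eq:Contract2} with $\varepsilon_2=0$, yet $b_k\not\to0$, so no combination with positive weights contracts. This is relevant to the paper's use of the lemma. In \eqref{eq:err10} it lists $\varepsilon_2=0$, but \eqref{eq:err7} has no $b_{k-1}$ term on the right, so the admissible choice there is $\varepsilon_2=\omega_2=1$.
\item Assuming $\omega_3>0$ is indeed implicit; it equals $1$ in the applications. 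For $\omega_3=0$ the case is not degenerate but trivial: \eqref{eq:Contract1} alone already makes $\omega_1 a_k+\omega_2 b_k$ contract.
\end{itemize}
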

\section{Well-posedness result for the MHD system \ref{eq:MHD_U}-\ref{eq:MHD_Bmass}}\label{sec:Existence}
The weak formulation of \eqref{eq:exact1}-\eqref{eq:exact4} in kernel spaces can be written as follows: $\forall (\V{v},\V{S}) \in \V{V}^2$:
\begin{align}
a_1(\V{u},\V{v}) + c(\V{u},\V{u},\V{v})  
- \kappa \, c(\V{B},\V{B},\V{v})   
&= \langle \V{f},\V{v} \rangle, \label{FV-NS} \\
a_2(\V{B}, \V{S}) + c(\V{u},\V{B},\V{S}) - 
c(\V{B},\V{u},\V{S}) &= 
\langle \V{g},\V{S}\rangle. \label{FV-Max}
\end{align}
First, we prove an a priori bound:
\begin{lemma}\label{lem:Stability}
Any solution $(\V{u},\V{B})$ of \eqref{FV-NS}-\eqref{FV-Max} satisfies 
\begin{equation}\label{eq:Stability}
 \|(\V{u},\V{B})\|_\VX \le \,
\sqrt{\frac{1}{\nu}  \|\V{f}\|^2_{-1}
 +  \frac{\kappa}{\mu}  \|\V{g}\|^2_{-1}}:= \; \Nbd_0.
\end{equation}
\end{lemma}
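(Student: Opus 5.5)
The plan is the standard energy argument. We test \eqref{FV-NS} with $\V{v}=\V{u}\in\V{V}$ and \eqref{FV-Max} with $\V{S}=\kappa\V{B}\in\V{V}$, add the two identities, and use the skew-symmetry of the trilinear form on divergence-free fields. Since $\V{u}\in\V{V}$ and Dirichlet conditions hold, integration by parts gives $c(\V{u},\V{w},\V{w})=0$ for all $\V{w}\in\V{X}$ and $c(\V{u},\V{v},\V{w})=-c(\V{u},\V{w},\V{v})$. This yields $c(\V{u},\V{u},\V{u})=0$ and $\kappa c(\V{u},\V{B},\V{B})=0$. The two coupling terms combine as
\begin{equation*}
-\kappa\, c(\V{B},\V{B},\V{u}) - \kappa\, c(\V{B},\V{u},\V{B}) = 0,
\end{equation*}
which uses the same skew-symmetry with $\V{B}\in\V{V}$ as the advecting field. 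This cancellation is the key step, and it is exactly why the magnetic equation is tested with $\kappa\V{B}$ rather than $\V{B}$.

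After the cancellation we are left with
\begin{equation*}
a_1(\V{u},\V{u}) + \kappa\, a_2(\V{B},\V{B}) = \langle \V{f},\V{u}\rangle + \kappa \langle \V{g},\V{B}\rangle .
\end{equation*}
On $\V{V}$ the grad-div terms vanish, so $a_1(\V{u},\V{u})=\nu\|\nabla\V{u}\|^2$ and $a_2(\V{B},\V{B})=\mu\|\nabla\V{B}\|^2$. The left-hand side is therefore exactly $\|(\V{u},\V{B})\|_\VX^2$. Alternatively, one can keep the $\gamma$ terms and use the lower bounds in \eqref{eq:NormEquiv}.

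For the right-hand side we use duality, $\langle \V{f},\V{u}\rangle\le \|\V{f}\|_{-1}\|\nabla \V{u}\|$, and write
\begin{equation*}
\|\V{f}\|_{-1}\|\nabla\V{u}\|=\tfrac{1}{\sqrt\nu}\|\V{f}\|_{-1}\cdot\sqrt{\nu}\|\nabla\V{u}\|, \qquad \kappa\|\V{g}\|_{-1}\|\nabla\V{B}\|=\sqrt{\tfrac{\kappa}{\mu}}\|\V{g}\|_{-1}\cdot\sqrt{\kappa\mu}\|\nabla\V{B}\|.
\end{equation*}
Cauchy--Schwarz in $\mathbb{R}^2$ then bounds the sum by $\Nbd_0\,\|(\V{u},\V{B})\|_\VX$. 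Dividing by $\|(\V{u},\V{B})\|_\VX$, with the case where it vanishes being trivial, gives \eqref{eq:Stability}.

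The only delicate point is justifying the skew-symmetry identities in the weak setting. We rely on $\nabla\cdot\V{u}=\nabla\cdot\V{B}=0$ and zero traces, and on the fact that the trilinear form is well defined on $\V{X}^3$ by \eqref{eq:nnl}, so we can argue by density of smooth compactly supported functions. Everything else is routine.
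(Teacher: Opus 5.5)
Your proposal is correct and follows the paper's proof: you test with $(\V{v},\V{S})=(\V{u},\kappa\V{B})$, the trilinear terms vanish by skew-symmetry on divergence-free fields (the two coupling terms cancel each other), and you bound the data terms by Cauchy--Schwarz. Your weighted splitting $\tfrac{1}{\sqrt{\nu}}\|\V{f}\|_{-1}\cdot\sqrt{\nu}\|\nabla\V{u}\|$ and $\sqrt{\kappa/\mu}\,\|\V{g}\|_{-1}\cdot\sqrt{\kappa\mu}\,\|\nabla\V{B}\|$ simply makes explicit the final Cauchy--Schwarz step that the paper leaves to the reader.
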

\begin{proof}
Take $\V{v}=\V{u}$ in \eqref{FV-NS} and $\V{S}=\kappa \, \V{B}$ in \eqref{FV-Max}, yielding 
\begin{align*}
 \|(\V{u},\V{B})\|_\VX^2 &= -c(\V{u},\V{u},\V{u})  
+ \kappa \,c(\V{B},\V{B},\V{u}) -
\kappa \,c(\V{u},\V{B},\V{B}) + 
\kappa \,c(\V{B},\V{u},\V{B})
+ \langle \V{f},\V{u} \rangle
+ \kappa \langle \V{g} , \V{B} \rangle.
 \end{align*}
Thanks to the incompressibility, the first four terms on the right-hand side vanish. Then the Cauchy-Schwarz gives \eqref{eq:Stability}.
\end{proof}
\begin{theorem}\label{The-existence}
For any data $(\V{f},\V{g}) \in \V{H}^{-1}(\Omega) \times \V{H}^{-1}(\Omega)$, the system \eqref{FV-NS}-\eqref{FV-Max} admits a solution satisfying \eqref{eq:Stability}. In addition, if 
\begin{equation}\label{Assum-WellPoss}
\Lambda_0:= \frac{2\Mbd_0 \Nbd_0}{\min\{\nu,\mu\}^{3/2}} < 1,
\end{equation}
then this solution is unique.
\end{theorem}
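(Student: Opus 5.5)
Existence will follow from a Galerkin argument combined with Brouwer's fixed point theorem. Uniqueness will follow from an energy estimate on the difference of two solutions, using the a priori bound of Lemma \ref{lem:Stability}.

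\textbf{Existence.} Since $\V{V}$ is separable, I take a nested sequence of finite-dimensional subspaces $\V{V}_m\subset\V{V}$ whose union is dense. On $\V{V}_m^2$ I define the map $\Phi_m$ by
\begin{equation*}
\big(\Phi_m(\V{u},\V{B}),(\V{v},\V{S})\big)_\VX := a_1(\V{u},\V{v}) + c(\V{u},\V{u},\V{v}) - \kappa c(\V{B},\V{B},\V{v}) - \langle\V{f},\V{v}\rangle + \kappa\big[a_2(\V{B},\V{S}) + c(\V{u},\V{B},\V{S}) - c(\V{B},\V{u},\V{S}) - \langle \V{g},\V{S}\rangle\big].
\end{equation*}
The Maxwell equation is tested with $\kappa\V{S}$, exactly as in the proof of Lemma \ref{lem:Stability}. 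Choosing $(\V{v},\V{S})=(\V{u},\V{B})$ and repeating the computation of that lemma, the trilinear terms cancel: $c(\V{u},\V{u},\V{u})=0$, $c(\V{u},\V{B},\V{B})=0$ by skew-symmetry, and $c(\V{B},\V{B},\V{u})$ cancels against $-c(\V{B},\V{u},\V{B})$. Using $a_1(\V{u},\V{u})\ge\nu\|\nabla\V{u}\|^2$ and $a_2(\V{B},\V{B})\ge\mu\|\nabla\V{B}\|^2$, this gives
\begin{equation*}
\big(\Phi_m(\V{u},\V{B}),(\V{u},\V{B})\big)\ge \|(\V{u},\V{B})\|_\VX\big(\|(\V{u},\V{B})\|_\VX-\Nbd_0\big).
\end{equation*}
This quantity is positive on the sphere of radius $\Nbd_0+\epsilon$. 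The standard corollary of Brouwer's theorem then yields a Galerkin solution $(\V{u}_m,\V{B}_m)$ bounded by $\Nbd_0$.

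Next I extract a subsequence converging weakly in $\V{H}^1_0$ and strongly in $\V{L}^2$, by Rellich. The trilinear terms pass to the limit for fixed test functions taken from some $\V{V}_k$. For example, $c(\V{u}_m,\V{u}_m,\V{v})=-c(\V{u}_m,\V{v},\V{u}_m)$, and the latter converges by the strong $L^2$ convergence when $\V{v}$ is smooth enough, or by the standard $L^4$ argument. Density of $\bigcup_k\V{V}_k$ then gives \eqref{FV-NS}--\eqref{FV-Max}. Weak lower semicontinuity preserves \eqref{eq:Stability}.

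\textbf{Uniqueness.} Let $(\V{u}_i,\V{B}_i)$, $i=1,2$, be two solutions, and set $\V{e}=\V{u}_1-\V{u}_2$ and $\V{E}=\V{B}_1-\V{B}_2$. Subtracting the equations and expanding, the momentum difference contains
\begin{equation*}
c(\V{e},\V{u}_1,\V{v})+c(\V{u}_2,\V{e},\V{v})-\kappa c(\V{E},\V{B}_1,\V{v})-\kappa c(\V{B}_2,\V{E},\V{v}),
\end{equation*}
with an analogous expansion for the induction difference. I test with $\V{v}=\V{e}$ and $\V{S}=\kappa\V{E}$. By skew-symmetry, $c(\V{u}_2,\V{e},\V{e})=0$ and $c(\V{u}_2,\V{E},\V{E})=0$. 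The pair $-\kappa c(\V{B}_2,\V{E},\V{e})$ and $-\kappa c(\V{B}_2,\V{e},\V{E})$ also cancels. What remains is
\begin{equation*}
\nu\|\nabla\V{e}\|^2+\kappa\mu\|\nabla\V{E}\|^2\le -c(\V{e},\V{u}_1,\V{e})+\kappa c(\V{E},\V{B}_1,\V{e})-\kappa c(\V{e},\V{B}_1,\V{E})+\kappa c(\V{E},\V{u}_1,\V{E}).
\end{equation*}

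Each term is bounded by $\Mbd_0$ times gradient norms, via \eqref{eq:nnl}. I would write $\|\nabla\V{u}_1\|\le \Nbd_0/\sqrt{\nu}$ and $\sqrt{\kappa}\|\nabla\V{B}_1\|\le\Nbd_0/\sqrt{\mu}$, together with $\|\nabla \V{e}\|\le \nu^{-1/2}\sqrt{\nu}\|\nabla\V{e}\|$ and the analogous bound for $\V{E}$. Set $m=\min\{\nu,\mu\}$. Then the two $\V{u}_1$ terms are bounded by $\Mbd_0\Nbd_0 m^{-3/2}(\nu\|\nabla\V{e}\|^2+\kappa\mu\|\nabla\V{E}\|^2)$. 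The two $\V{B}_1$ terms are bounded by $2\Mbd_0\Nbd_0 m^{-3/2}\,\sqrt{\nu}\|\nabla\V{e}\|\sqrt{\kappa\mu}\|\nabla\V{E}\|\le \Mbd_0\Nbd_0 m^{-3/2}\|(\V{e},\V{E})\|_\VX^2$. Summing, $\|(\V{e},\V{E})\|_\VX^2\le\Lambda_0\|(\V{e},\V{E})\|_\VX^2$, so \eqref{Assum-WellPoss} forces $\V{e}=\V{E}=\V{0}$.

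\textbf{Main obstacle.} The step I expect to be most delicate is the bookkeeping of the weights $\kappa$, $\nu$, $\mu$. The goal is to land exactly on the constant $2\Mbd_0\Nbd_0/\min\{\nu,\mu\}^{3/2}$ rather than a worse one. For this I would use the pairing $\sqrt{\kappa}\|\nabla\V{B}_1\|$ with $\sqrt{\kappa}\|\nabla\V{E}\|$ so that no stray powers of $\kappa$ appear. I would also bound the velocity and magnetic contributions jointly through the product norm, rather than separately.
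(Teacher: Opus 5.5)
Your proposal is correct and follows the paper's proof. Existence uses a Galerkin approximation with the $\kappa$-weighted map $\Phi_m$, Brouwer's fixed-point corollary and compactness in the trilinear terms. Uniqueness tests the difference with $(\V{u}_1-\V{u}_2,\kappa(\V{B}_1-\V{B}_2))$, so the $\V{B}_2$ terms cancel and the four remaining trilinear terms are bounded by $\Lambda_0\|(\V{u}_1-\V{u}_2,\V{B}_1-\V{B}_2)\|_\VX^2$. The differences are cosmetic: Cauchy--Schwarz instead of Young in the coercivity bound, an explicit density step, and weight bookkeeping that is more careful than the paper's and lands on the same constant.
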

\begin{proof}
Since $\mathbb{V} \times \mathbb{V}$ is a closed subspace of 
$\V{H}^1_0(\Omega) \times \V{H}^1_0(\Omega) $, it is a separable Hilbert space. Hence there exists an increasing sequence of finite-dimensional subspaces $\mathbb{V}_m \times \mathbb{V}_m$ of $\mathbb{V}\times \mathbb{V} $ 
such that
  $$
  \mathbb{V}\times \mathbb{V} = 
  \underset{m\geq 0}{\bigcup}\mathbb{V}_m \times \mathbb{V}_m.
  $$
Let us define the following mapping $\Phi_m$ from 
$\mathbb{V}_m \times \mathbb{V}_m$ 	 into itself by:

\begin{equation} \label{MappPhi}
\begin{aligned}
\langle \Phi_m( \V{u}, \V{B}), (\V{v},\V{S})\rangle &:=
a_1(\V{u},\V{v}) + \kappa a_2(\V{B},\V{S})
+ c(\V{u},\V{u},\V{v})
\\ & 
- \kappa \,c(\V{B},\V{B},\V{v})
+ \kappa \,c(\V{u},\V{B},\V{S})
- \kappa \,c(\V{B},\V{u},\V{S})
 \\ & 
 - \langle \V{f}, \V{v} \rangle 
- \kappa \,\langle \V{g}, \V{S} \rangle.
 \end{aligned}
\end{equation}
The mapping $\Phi_m$ is continuous on $\mathbb{V}_m \times \mathbb{V}_m$.
Moreover, taking $(\V{v},\V{S})=(\V{u},\V{B})$ in \eqref{MappPhi} and using the Cauchy-Schwartz and Young's inequalities repeatedly, we obtain
\begin{equation*}
\begin{aligned}
\langle \Phi_m( \V{u}, \V{B}), (\V{u}, \V{B})\rangle &=
\|(\V{u},\V{B})\|^2_\VX 
 - \langle \V{f}, \V{u} \rangle 
- \kappa \,\langle \V{g}, \V{B} \rangle
\\ 
& \ge \|(\V{u},\V{B})\|^2_\VX 
- \|\V{f}\|_{-1} \|\nabla \V{u}\| - \kappa \|\V{g}\|_{-1} \|\nabla \V{B}\|
\\ &\ge 
\frac{1}{2}\|(\V{u},\V{B})\|^2_\VX   
- \frac{1}{2} \left( \frac{\|\V{f}\|^2_{-1}}{\nu} + \frac{\kappa \|\V{g}\|^2_{-1}}{\mu} \right) 
\\ & =
\frac{1}{2}\left( \|(\V{u},\V{B})\|^2_\VX   
- \Nbd_0^2 \right).
%
 \end{aligned}
\end{equation*}
So the right-hand side is nonnegative on the sphere of radius $r=\Nbd_0$. Thanks to the Brouwer fixed-point theorem (see for instance \cite{GirRav1986}) there exist  $(\V{u}_m,\V{B}_m) \in \mathbb{V} \times \mathbb{V} $ satisfying
\begin{equation}\label{Am=0}
 \Phi_m(\V{u}_m,\V{B}_m) = 0\qquad \mbox{ and } 
 \qquad 
\|(\V{u},\V{B})\|_\VX  \le r.    
\end{equation}
The sequence $((\V{u}_m,\V{B}_m))_m$ is uniformly bounded in 
$\mathbb{V} \times \mathbb{V} $. Then, there exists a sub-sequence, still denoted, for simplicity by $((\V{u}_m,\V{B}_m))_m$  that is weakly convergent in $\mathbb{V} \times \mathbb{V} $ towards an element $(\V{u},\V{B})$. 
Since the space $\mathbb{V} \times \mathbb{V} $ is compactly embedded in $\V{L}^4(\Omega)\times \V{L}^4(\Omega)$, the convergence 
$((\V{u}_m,\V{B}_m))_m$  is strong in $\V{L}^4(\Omega)\times \V{L}^4(\Omega)$. 

It remains to be checked that $(\V{u},\V{B})$ is a solution to the problem \eqref{FV-NS}-\eqref{FV-Max}.  To do this, we go back to 
\eqref{Am=0} to write for all $(\V{v},\V{S})$ in 
$\mathbb{V}_m \times \mathbb{V}_m$:
\begin{equation*} 
\begin{aligned}
0 &=
a_1(\V{u}_m,\V{v})
+ \kappa a_2(\V{B}_m, \V{S})
+ c(\V{u}_m,\V{u}_m,\V{v})
\\ & 
- \kappa \, c(\V{B}_m,\V{B}_m,\V{v})
+\kappa \, c(\V{u}_m,\V{B}_m,\V{S})
- \kappa \,c(\V{B}_m,\V{u}_m,\V{S})
 \\ & 
 - \langle \V{f}, \V{v} \rangle 
- \kappa \, \langle \V{g}, \V{S} \rangle.
 \end{aligned}
\end{equation*}
Let us check only the non-linear terms such as: $\displaystyle c(\V{u}_m,\V{u}_m, \V{v})=\int_\Omega \V{u}_m \cdot \nabla \V{u}_m \cdot \V{v} \, d\V{x}$. Since $\|(\V{u},\V{B})\|_\VX \le r $, we have that
\begin{eqnarray*}
\left|
\int_\Omega \V{u}_m \cdot \nabla \V{u}_m \cdot \V{v} \, d\V{x} 
- \int_\Omega \V{u} \cdot \nabla \V{u} \cdot \V{v} \, d\V{x} 
\right|
&=& 
\left|
\int_\Omega(\V{u}_m-\V{u})\cdot\nabla\V{u}_m\cdot\V{v}\;d\V{x}
+\int_\Omega(\V{u}\cdot\nabla)(\V{u}_m-\V{u})\cdot\V{v}\;d\V{x}\;d\V{x}  
\right|
\\ &\le& \frac{r}{\nu} \|\V{u}_m-\V{u}\|_{\V{L}^4} \|\V{v}\|_{\V{L}^4}
+\|\V{u}\|_{\V{L}^4}|\V{v}|_1\|\V{u}_m-\V{u}\|_{\V{L}^4}.
\end{eqnarray*}
The strong convergence $\V{u}_m \longrightarrow \V{u}$
in $\V{L}^4(\Omega) $ ensures that the right-hand side goes to zero as $m \rightarrow\infty$.
Similarly, we get the convergence of 
\begin{equation*}
- \int_\Omega \V{B}_m \cdot \nabla \V{B}_m \cdot \V{v} \, d\V{x}   
+  \int_\Omega \V{u}_m \cdot \nabla \V{B}_m \cdot \V{S} \, d\V{x} 
- \int_\Omega \V{B}_m \cdot \nabla \V{u}_m \cdot \V{S} \, d\V{x}     
\end{equation*}
to 
\begin{equation*}
- \int_\Omega \V{B} \cdot \nabla \V{B} \cdot \V{v} \, d\V{x}  
+  \int_\Omega \V{u} \cdot \nabla \V{B} \cdot \V{S} \, d\V{x} 
- \int_\Omega \V{B} \cdot \nabla \V{u} \cdot \V{S} \, d\V{x}.
\end{equation*}
We conclude that $(\V{u},\V{B})$ is a solution of the problem \eqref{FV-NS}-\eqref{FV-Max}.

To prove uniqueness, assume that there exist two solutions $(\V{u}_i,\V{B}_i),\, i=1,2,$ of \eqref{FV-NS}-\eqref{FV-Max}. Taking the difference between the two equations with test function $(\V{v},\V{S})$ equal to $(\delta \V{u},\kappa \,\delta \V{B} ) := (\V{u}_1-\V{u}_2,\kappa \, (\V{B}_1 - \V{B}_2))$ gives
\begin{align*}
0 & = \|(\delta \V{u},\delta \V{B})\|^2_\VX 
+ \underbrace{\int_\Omega \left( \V{u}_1 \cdot \nabla \V{u}_1  - 
\V{u}_2 \cdot \nabla \V{u}_2 \right) \cdot \delta \V{u} \, d\V{x} 
}_{I_1}
+ 
\kappa \underbrace{ \,\int_\Omega \left( \V{u}_1 \cdot \nabla \V{B}_1 
- \V{u}_2 \cdot \nabla \V{B}_2 \right) 
\cdot   \delta \V{B} \, d\V{x}
}_{I_2}
\\
& \qquad 
- \kappa \underbrace{\int_\Omega \left( \V{B}_1\cdot \nabla \V{B}_1-
\V{B}_2\cdot \nabla \V{B}_2\right)  \cdot \delta \V{u} \, d\V{x} }_{I_3}   
- \kappa \underbrace{\int_\Omega \left( \V{B}_1 \cdot \nabla \V{u}_1 -\V{B}_2 \cdot \nabla \V{u}_2 \right)
\cdot \delta \V{B} \, d\V{x}}_{I_4}. 
\end{align*}
Thanks to the incompressibility, the first and the second integrals $I_1$ and $I_2$ become 
\begin{align*}
I_1 & = 
c(\delta\V{u},\V{u}_1,\delta \V{u})
+ c( \V{u}_2,\delta \V{u},\delta \V{u})
\; = c(\delta \V{u}, \V{u}_1,\delta \V{u}), \\
I_2 
& = c(\delta  \V{u},\V{B}_1, \delta \V{B})
+  c(\V{u}_2,\delta \V{B},\delta \V{B}) \; = 
 c(\delta \V{u},\V{B}_1,\delta \V{B}). 
\end{align*}
On the other hand
\begin{align*}
I_3 & =
c(\delta \V{B}, \V{B}_1, \delta \V{u} )
+ c(\V{B}_2,\delta \V{B},\delta \V{u}) \; = 
 c(\delta \V{B}, \V{B}_1,\delta \V{u})
- c(\V{B}_2,\delta \V{u},\delta \V{B}),
\\
I_4 &= c(\delta \V{B},\V{u}_1,\delta \V{B})
+ c(\V{B}_2,\delta \V{u},\delta \V{B}). 
\end{align*}
Hence 
$$
I_3 + I_4 = 
c(\delta \V{B}, \V{B}_1,\delta \V{u})
 + c(\delta \V{B},\V{u}_1,\delta \V{B}). 
$$
Then, using \eqref{eq:Stability} and Young's inequality, we can write the following bounds
\begin{align*}
\|(\delta \V{u},\delta \V{B})\|^2_\VX 
& \le 
\left| c(\delta \V{u},\V{u}_1,\delta \V{u}) \right|
+ \kappa \left|  c(\delta \V{u},\V{B}_1, \delta \V{B}) \right| 
+  \kappa\,\left| c(\delta \V{B},\V{B}_1,\delta \V{u}) \right|
 + \kappa\, \left| c(\delta \V{B},\V{u}_1,\delta \V{B})\right|
\\ & \le 
\Mbd_0 \left(
 \|\nabla  \V{u}_1 \| \left( \|\nabla \delta \V{u} \|^2 + \kappa\,  \|\nabla \delta \V{B} \|^2 \right)
+ 2 \kappa\, \|\nabla  \V{B}_1 \| \, \|\nabla \delta \V{u} \|\, \|\nabla \delta \V{B} \|  \right) 
\\ & \le 
\Mbd_0 \Nbd_0 \left(  \frac{\|\nabla \delta \V{u} \|^2 + \kappa\,  \|\nabla \delta \V{B} \|^2}{\sqrt{\nu}} + \frac{\|\nabla \delta \V{u} \|^2 + \kappa\,  \|\nabla \delta \V{B} \|^2}{\sqrt{\mu}} \right) 
\\ & \le 
\frac{2\Mbd_0 \Nbd_0}{\min\{\nu,\mu\}^{3/2}} \|(\delta \V{u},\delta \V{B})\|^2_\VX .
\end{align*}
Recalling the small data assumption \eqref{Assum-WellPoss}, we obtain the uniqueness of the solution. 
\end{proof}
\section{Picard iterative method}\label{sec:Picard}
\DY{Having established uniqueness under the small data assumption \eqref{Assum-WellPoss}, we now focus on the constructive aspect of the problem. To this end, we develop a Picard iterative method based on the interaction between the two equations and show that it converges to the unique solution of the initial problem.}
\begin{algorithm}\label{algorithm:Picard}

Let $\V{u}_0=\mathbf{0}, \V{B}_0=\mathbf{0}$, and for $k = 1, 2, \ldots $, compute until convergence:  

Find $(\V{u}_k,\V{B}_k,p_k,\lambda_k) \in \VX^2 \times Q^2$ solution of 
\begin{equation}\label{eq:PicardScheme}
\begin{aligned}
& a_1(\V{u}_{k} ,\V{v})
+ c(\V{u}_{k-1},\V{u}_{k},\V{v})
- \kappa \, c(\V{B}_{k-1},\V{B}_{k},\V{v}) + b(p_{k},\V{v})
= \langle \V{f}, \V{v} \rangle 
\qquad \forall \, \V{v} \in \VX, \\
& a_2(\V{B}_{k},\V{S}) + 
c(\V{u}_{k-1},\V{B}_{k},\V{S}) - 
c(\V{B}_{k-1},\V{u}_{k},\V{S})
+ b(\lambda_{k},\V{S})
=  \langle \V{g}, \V{S} \rangle
\qquad \forall \, \V{S} \in \VX, \\  
& b(q, \V{u}_{k}) =0 \qquad \text{and}  \qquad 
b(\eta, \V{B}_{k}) =0
\qquad \forall \, (q,\eta) \in Q^2.
\end{aligned}
\end{equation}
\end{algorithm}
\begin{theorem}\label{Thm-MonoCVG}
Under the same assumption of Theorem \ref{The-existence}, the solution of the Picard scheme \eqref{eq:PicardScheme} is uniformly bounded   
\begin{equation}\label{Est-UkBk}
\|(\V{u}_k,\V{B}_k)\|_\VX \le \Nbd_0,
\qquad \forall k \in  \mathbb{N}^*.
\end{equation}
Furthermore, if \eqref{Assum-WellPoss} holds, 
then the Scheme \eqref{eq:PicardScheme} is convergent.
\end{theorem}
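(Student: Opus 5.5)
We prove the uniform bound \eqref{Est-UkBk} and then the convergence, mirroring the a priori estimate of Lemma \ref{lem:Stability} and the uniqueness argument of Theorem \ref{The-existence}.

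\textbf{Uniform bound.} For each $k$, the scheme \eqref{eq:PicardScheme} is a linear Oseen-type saddle point problem, so it is well posed by Lax--Milgram on $\V{V}^2$ and the inf--sup condition. Coercivity on $\V{V}^2$ comes from the skew-symmetry argument below, with $\V{u}_{k-1},\V{B}_{k-1}\in \V{V}$ fixed. We test the first equation with $\V{v}=\V{u}_k$ and the second with $\V{S}=\kappa\V{B}_k$; since $\V{u}_k,\V{B}_k\in\V{V}$, the pressure terms drop. Because $\nabla\cdot\V{u}_{k-1}=0$ (true by induction, with $\V{u}_0=\V{0}$), we have $c(\V{u}_{k-1},\V{u}_k,\V{u}_k)=0$ and $c(\V{u}_{k-1},\V{B}_k,\V{B}_k)=0$. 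Since $\nabla\cdot \V{B}_{k-1}=0$,
\[
-\kappa c(\V{B}_{k-1},\V{B}_k,\V{u}_k)-\kappa c(\V{B}_{k-1},\V{u}_k,\V{B}_k)=0 .
\]
This is exactly the structural cancellation that makes lagging the first argument work. We are left with
\[
\|(\V{u}_k,\V{B}_k)\|_\VX^2=\langle\V{f},\V{u}_k\rangle+\kappa\langle\V{g},\V{B}_k\rangle .
\]
Cauchy--Schwarz and $\nu\|\nabla\V{u}_k\|^2 \le \|\V{u}_k\|_{\Lumap}^2$ then give $\|(\V{u}_k,\V{B}_k)\|_\VX\le\Nbd_0$, with no smallness needed. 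Note that $a_1(\V{u}_k,\V{u}_k)=\nu\|\nabla\V{u}_k\|^2$ on $\V{V}$.

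\textbf{Convergence.} Let $(\V{u},\V{B})$ be the unique solution from Theorem \ref{The-existence}. Set $\V{e}_k=\V{u}-\V{u}_k$ and $\V{E}_k=\V{B}-\V{B}_k$, and subtract \eqref{eq:PicardScheme} from \eqref{FV-NS}--\eqref{FV-Max}. We write
\[
c(\V{u},\V{u},\V{v})-c(\V{u}_{k-1},\V{u}_k,\V{v})=c(\V{e}_{k-1},\V{u},\V{v})+c(\V{u}_{k-1},\V{e}_k,\V{v}),
\]
and split the three other trilinear differences the same way, so that the lagged quantity carries the error $\V{e}_{k-1}$ or $\V{E}_{k-1}$ in its first slot. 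Testing with $(\V{e}_k,\kappa\V{E}_k)$, the terms with $\V{u}_{k-1}$ or $\V{B}_{k-1}$ in the first slot cancel exactly as in the boundedness step. What remains is
\[
\|(\V{e}_k,\V{E}_k)\|_\VX^2 = -c(\V{e}_{k-1},\V{u},\V{e}_k)+\kappa c(\V{E}_{k-1},\V{B},\V{e}_k)-\kappa c(\V{e}_{k-1},\V{B},\V{E}_k)+\kappa c(\V{E}_{k-1},\V{u},\V{E}_k).
\]
We bound each term by \eqref{eq:nnl} and use \eqref{eq:Stability} for $\|\nabla\V{u}\|\le\Nbd_0/\sqrt\nu$ and $\sqrt\kappa\|\nabla\V{B}\|\le\Nbd_0/\sqrt\mu$. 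Following the weighting of the uniqueness proof, this should yield
\[
\|(\V{e}_k,\V{E}_k)\|_\VX^2\le \Lambda_0\,\|(\V{e}_{k-1},\V{E}_{k-1})\|_\VX\,\|(\V{e}_k,\V{E}_k)\|_\VX .
\]
Hence $\|(\V{e}_k,\V{E}_k)\|_\VX\le\Lambda_0^k\|(\V{u},\V{B})\|_\VX$, which tends to zero under \eqref{Assum-WellPoss}. Convergence of the pressures $p_k$ and $\lambda_k$ then follows from the inf--sup condition applied to the error equations.

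\textbf{Main obstacle.} The hard step is the bookkeeping of the mixed terms with the $\kappa$-weights, so that the constant is $\Lambda_0$ and not something larger. In particular, the cross terms $\kappa\|\nabla\V{B}\|\,\|\nabla\V{E}_{k-1}\|\,\|\nabla\V{e}_k\|$ must be balanced via $\sqrt\kappa\|\nabla\V{B}\|$ together with $\sqrt{\kappa}\|\nabla \V{E}\|$. I would first try the Young-inequality grouping used in the proof of Theorem \ref{The-existence}. If a factor of $2$ is lost, I would accept a slightly stronger constant or absorb it by $\min\{\nu,\mu\}$ scaling.
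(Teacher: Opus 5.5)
Your proposal is correct and follows the paper's proof essentially step for step. The bound comes from testing with $(\V{u}_k,\kappa\V{B}_k)$ and cancelling the lagged $\V{B}_{k-1}$ cross terms; convergence comes from the error identity with the four residual trilinear terms, contraction by $\Lambda_0$, and inf--sup for $p_k,\lambda_k$. Your splitting, with $\V{u}_{k-1},\V{B}_{k-1}$ in the first slot, is if anything tidier than the paper's displayed $J_2$, $J_4$: those are individually off by terms that cancel only in the sum.

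Your worry about losing a factor of $2$ is unfounded. Apply Cauchy--Schwarz in $\mathbb{R}^2$ to the pairs $(\|\nabla\V{e}_{k-1}\|,\sqrt{\kappa}\|\nabla\V{E}_{k-1}\|)$ and $(\|\nabla\V{e}_{k}\|,\sqrt{\kappa}\|\nabla\V{E}_{k}\|)$, once directly and once with the first pair's entries swapped. This bounds the right-hand side by $\Mbd_0\Nbd_0(\nu^{-1/2}+\mu^{-1/2})\min\{\nu,\mu\}^{-1}\|(\V{e}_{k-1},\V{E}_{k-1})\|_{\VX}\|(\V{e}_k,\V{E}_k)\|_{\VX}\le\Lambda_0\|(\V{e}_{k-1},\V{E}_{k-1})\|_{\VX}\|(\V{e}_k,\V{E}_k)\|_{\VX}$, exactly the constant in \eqref{Assum-WellPoss}.
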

\begin{proof}
Taking $(\V{v},\V{S})= (\V{u}_k,\kappa\, \V{B}_k)$ in \eqref{eq:PicardScheme} and using incompressibility conditions, yielding 
\begin{align*}
& \|(\V{u}_k,\V{B}_k)\|^2_\VX 
- \kappa\,c(\V{B}_{k-1},\V{B}_{k},\V{u}_k)
- \kappa\,c(\V{B}_{k-1},\V{u}_{k},\V{B}_k)
= \langle \V{f}, \V{u}_k \rangle
+ \kappa\,\langle \V{g}, \V{B}_k \rangle.
\end{align*}
Since $\displaystyle c(\V{B}_{k-1}, \V{u}_{k}, \V{B}_k ) = - c(\V{B}_{k-1},\V{B}_{k},\V{u}_k)$,
for all $k \in \mathbb{N}^*$, we immediately have the bound \eqref{Est-UkBk}.

We introduce the errors as 
\begin{equation}
    \label{eq:Picard0}
    \begin{aligned}
\V{e}_k & =\V{u}-\V{u}_k, \, \delta_k = p-p_k, \\
\V{D}_k & =\V{B}-\V{B}_k, \, r_k = \lambda-\lambda_k.
    \end{aligned}
\end{equation}

Then taking the difference between the equations \eqref{eq:PicardScheme} and the exact solution, we find for all $ (\V{v},\V{S},q,\eta) \in \V{X}^2   \times  Q^2$ that
\begin{equation}\label{Differnce-Mono-kETm}
\begin{aligned}
 a_1 (\V{e}_{k},\V{v})
+ a_2(\V{D}_{k},\V{S})
& + \underbrace{[c(\V{u},\V{u},\V{v})  
-c(\V{u}_{k-1},\V{u}_{k},\V{v})]
 }_{J_1}
- \underbrace{[c(\V{B},\V{B},\V{v})  
-c(\V{B}_{k-1},\V{B}_{k},\V{v})]
}_{J_2}
\\ & 
+ 
\underbrace{[c(\V{u},\V{B},\V{S})  
-c(\V{u}_{k-1},\V{B}_{k},\V{S})]
}_{J_3} 
- 
\underbrace{[c(\V{B},\V{u},\V{S})  
-c(\V{B}_{k-1},\V{u}_{k},\V{S})]
}_{J_4} 
\\ & 
+ b(\delta_{k},\V{v})
+ b(r_{k},\V{S}) = 0. 
\end{aligned}
\end{equation}
Taking $(\V{v},\V{S}) = (\V{e}_k,\kappa\,\V{D}_k)$ in \eqref{Differnce-Mono-kETm}, and using the incompressibility, we get  
\begin{align*}
J_1 &= c(\V{u},\V{u},\V{e}_k)  
-c(\V{u}_{k-1},\V{u}_{k},\V{e}_k)
 = c(\V{e}_{k-1},\V{u},\V{e}_k) 
 + c(\V{u}_{k-1},\V{e}_k,\V{e}_k)
 \\ & =  
  c(\V{e}_{k-1},\V{u},\V{e}_k) ,
\\
J_2 &= 
c(\V{B},\V{B},\V{e}_k)  
-c(\V{B}_{k-1},\V{B}_{k},\V{e}_k)
 = c(\V{D}_{k-1},\V{B},\V{e}_k)  
 + c(\V{B}, \V{D}_k,\V{e}_k), 
 \\
J_3 &= 
c(\V{u},\V{B},\V{D}_k)  
-c(\V{u}_{k-1},\V{B}_{k},\V{D}_k)
 = c(\V{e}_{k-1},\V{B},\V{D}_k)
 + c(\V{u},\V{D}_{k},\V{D}_k)
\\ & = 
c(\V{e}_{k-1},\V{B},\V{D}_k),
 \\
J_4 &=
c(\V{B},\V{u},\V{D}_k)  
-c(\V{B}_{k-1},\V{u}_{k},\V{D}_k)
 = c(\V{D}_{k-1},\V{u},\V{D}_k)
+ c(\V{B},\V{e}_{k},\V{D}_k)
\\ & = 
c(\V{D}_{k-1},\V{u},\V{D}_k)
-c(\V{B},\V{D}_k,\V{e}_k),
\end{align*}
where the last terms in $J_2$ and $J_4$ cancel each other out. Inserting $J_i, i=1,\cdots, 4$ in \eqref{Differnce-Mono-kETm}, we obtain
\begin{equation*}
\begin{aligned}
\|(\V{e}_k,\V{D}_k)\|^2_\VX 
& = -c(\V{e}_{k-1},\V{u}, \V{e}_k)
+ \kappa\, c(\V{D}_{k-1},\V{B}, \V{e}_k)
\\ & 
\quad 
- \kappa\, c(\V{e}_{k-1},\V{B}, \V{D}_k)
+ \kappa\,c(\V{D}_{k-1},\V{u},\V{D}_k).
\end{aligned}
\end{equation*}
Thanks to Cauchy-Schwartz and Young's inequalities, combining with the estimate \eqref{Est-UkBk}, we find 
\begin{align*}
\|(\V{e}_k,\V{D}_k)\|^2_\VX
 & \le 
  \Mbd_0 \Nbd_0 \left(\frac{\|\nabla \V{e}_{k-1}\|
\|\nabla \V{e}_{k}\|}{\sqrt{\nu}}
 + 
  \frac{\sqrt{\kappa}\|\nabla \V{D}_{k-1}\| \, \|\nabla \V{e}_{k}\|}{\sqrt{\mu}} \right)
\\ & \quad +
   \Mbd_0 \Nbd_0 \left( \frac{\sqrt{\kappa}\|\nabla \V{D}_{k}\| \, \|\nabla \V{e}_{k-1}\|}{\sqrt{\mu}}
+  \frac{\kappa \|\nabla \V{D}_{k-1}\| \, \|\nabla \V{D}_{k}\|}{\sqrt{\nu}} \right)
\\ & \quad \le 
\frac{2\Mbd_0 \Nbd_0}{\min\{\nu,\mu\}^{3/2}}
\sqrt{\nu \|\nabla \V{e}_{k-1} \|^2 + \kappa \mu \|\nabla \V{D}_{k-1} \|^2 }\,
\sqrt{\nu \|\nabla \V{e}_{k} \|^2 + \kappa \mu \|\nabla \V{D}_{k} \|^2 } 
\\ &  \qquad = 
 \Lambda_0\, \|(\V{e}_{k-1},\V{D}_{k-1})\|_\VX \, \|(\V{e}_k,\V{D}_k)\|_\VX.
\end{align*}
Then we get that
\begin{align*}
\|(\V{e}_k,\V{D}_k)\|_\VX \le \Lambda_0\, \|(\V{e}_{k-1},\V{D}_{k-1})\|_\VX \le \Lambda_0^k \, \|(\V{u},\V{B})\|_\VX.
\end{align*}
Therefore, if $\Lambda_0<1 $, then the sequence $(\V{u}_k,\V{B}_k)$ converges to $(\V{u},\V{B}) $ as $k \rightarrow \infty$ in $\V{V} \times \V{V}$.

To show the convergence of the pressure approximations $p_k$, consider the error in the momentum equation $ \V{v} \in \V{X}$:
\begin{equation}\label{ErrPicardPressure}
\begin{aligned}
 a_1 (\V{e}_{k},\V{v})
& + [c(\V{e}_{k-1},\V{u},\V{v})  
+c(\V{u}_{k-1},\V{e}_{k},\V{v})]
- \kappa[c(\V{D}_{k-1},\V{B},\V{v})  
+c(\V{B},\V{D}_{k},\V{v})]
+ b(\delta_{k},\V{v})
 = 0. 
\end{aligned}
\end{equation}
Applying the inf-sup condition in \eqref{ErrPicardPressure} gives
\begin{equation}
    \label{ErrPicardPressure1}
     \beta \| \delta_k \| \le \nu\|\nabla \V{e}_{k}\| + \gamma \|\divergence \V{e}_{k}\| + \Mbd_0 \Nbd_0  \left(\frac{\|\nabla \V{e}_{k-1}\| +
\|\nabla \V{e}_{k}\|}{\sqrt{\nu}}
 + \frac{\sqrt{\kappa}\|\nabla \V{D}_{k-1}\| \ + \|\nabla \V{D}_{k}\|}{\sqrt{\mu}} \right),
\end{equation}
which also implies the convergence of the pressure sequence. The convergence  $\lambda_k \rightarrow \lambda$ can be shown in a similar way, and is omitted for brevity.
\end{proof}
\subsubsection{The linear system for solving Algorithm \ref{algorithm:Picard}}\label{subsec:PicardSystem}
To discuss the linear system corresponding to  \eqref{eq:PicardScheme}, assume $\kappa=1$, and let $(\overrightarrow{X}_{\V{u},k}, \overrightarrow{X}_{\V{B},k}) \in \mathbb{R}^n \times \mathbb{R}^n$, and $(\overrightarrow{X}_{p,k}, \overrightarrow{X}_{\lambda,k}) \in \mathbb{R}^m \times \mathbb{R}^m$ be the coefficient vectors corresponding to the spatial discretizations of $\V{u}_k, \V{B}_k$, and $p_k, \lambda_k$. Moreover, for $j=1,2$, let us define the corresponding finite element matrices: 
\begin{equation}
\label{eq:FEM_Matrices}
    \begin{aligned}
     \widetilde{A}_j &\in \mathbb{R}^{n \times n}, \text{ matrix arising from } a_j(\cdot,\cdot):\VX_h \times \VX_h, \\
     C_{1,k-1} &\in \mathbb{R}^{n \times n}, \text{ matrix arising from mixed terms of } c^*(\V{u}_{k-1},\cdot,\cdot):\VX_h \times \VX_h, \\
     C_{2,k-1} &\in \mathbb{R}^{n \times n}, \text{ matrix arising from mixed terms of } c^*(\V{B}_{k-1},\cdot,\cdot):\VX_h \times \VX_h, \\
     A_{j,k-1} & = \widetilde{A}_j + C_{j,k-1}, \\
     \widehat{B} &\in \mathbb{R}^{n \times m}, \text{ matrix arising from } b(\cdot,\cdot): \VX_h \times Q_h,
    \end{aligned}
\end{equation}
The linear system of the Picard method \eqref{eq:PicardScheme} is then
\begin{equation}\label{eq:PicardSystem1}
\begin{bmatrix} 
A_{1,k-1} & C_{2,k-1} & \widehat{B}^T & 0 \\ 
C_{1,k-1} & A_{2,k-1} &  0  & \widehat{B}^T \\
\widehat{B}         &  0        &  0  & 0 \\
0                   &  \widehat{B}  &  0  & 0
\end{bmatrix}
\begin{bmatrix} \overrightarrow{X}_{\V{u},k}  \\ 
\overrightarrow{X}_{\V{B},k}  \\
\delta \overrightarrow{X}_{p,k} \\
\delta \overrightarrow{X}_{\lambda,k}
\end{bmatrix}
= \begin{bmatrix} 
\overrightarrow{F}_1 - \widehat{B}^T \overrightarrow{X}_{p,k-1} \\ \overrightarrow{F}_2 - \widehat{B}^T \overrightarrow{X}_{\lambda,k-1} \\ \overrightarrow{0} \\ \overrightarrow{0}
\end{bmatrix}.
\end{equation}
The system \eqref{eq:PicardSystem1} is indefinite of size $(2n+2m) \times (2n+2m)$, which is not always easy to solve using iterative methods. By defining 
\begin{equation}
    \label{eq:PicardSystem2}
    \begin{aligned}
      A_{k-1} &:= \begin{bmatrix} 
        A_{1,k-1} & C_{2,k-1} \\ 
        C_{1,k-1} &  A_{2,k-1} 
        \end{bmatrix}, \, B:= \begin{bmatrix} 
        \widehat{B} & 0 \\ 
        0   &  \widehat{B}
        \end{bmatrix} \\
        \overrightarrow{X}_{1,k} &:= \begin{bmatrix}
            \overrightarrow{X}_{\V{u},k} \\
            \overrightarrow{X}_{\V{B},k}
        \end{bmatrix}, \, 
        \overrightarrow{X}_{2,k} := \begin{bmatrix}
            \overrightarrow{X}_{p,k} \\
            \overrightarrow{X}_{\lambda,k}
        \end{bmatrix},
    \end{aligned}
\end{equation}
the \eqref{eq:PicardSystem1} can be written in a block $LU$ form as
\begin{equation}\label{eq:PicardSystem3}
\begin{aligned}  
\begin{bmatrix} 
A_{k-1} & B^T \\ 
B &  0 
\end{bmatrix}
\begin{bmatrix} \overrightarrow{X}_{1,k}  \\ 
\delta \overrightarrow{X}_{2,k}
\end{bmatrix}
& = \begin{bmatrix} \overrightarrow{F} - B^T \overrightarrow{X}_{2,k} \\ \overrightarrow{0} \end{bmatrix} \implies \\
\begin{bmatrix} 
A_{k-1} & 0 \\ 
B &  -B A_{k-1}^{-1} B^T 
\end{bmatrix}
\begin{bmatrix} 
I & A_{k-1}^{-1} B^T \\
0 & I 
\end{bmatrix} 
\begin{bmatrix} 
\overrightarrow{X}_{1,k}  \\ 
\delta \overrightarrow{X}_{2,k}
\end{bmatrix}
& = 
\begin{bmatrix} \overrightarrow{F} - B^T \overrightarrow{X}_{2,k} \\ \overrightarrow{0} 
\end{bmatrix}.
\end{aligned}
\end{equation}
Theoretically, the latter system is easier to solve than the former, as smaller size linear systems with coefficient matrices, $A_{k-1} \in \mathbb{R}^{2n \times 2n}$ and $S_{k-1}:=-B A_{k-1}^{-1} B^T  \in \mathbb{R}^{2m \times 2m} $, needs be solved. However, solving the system with $S_{k-1}$ is known to be hard, and owing to its dependence on $k$, is computationally intensive, especially for 3D problems.
\section{Steady IVS scheme}\label{sec:SIVS}
\DY{In this section, we introduce an Incremental Viscosity Splitting (IVS) scheme and study its convergence properties. This approach provides an alternative numerical framework for approximating the solution.}
\begin{algorithm}\label{algorithm:SIVS}
Let $(\V{u}_0,p_0,\V{B}_0,\lambda_0) = (\V{0},0,\V{0},0)$ and for $k = 1, 2, \ldots $, compute the until convergence: 
\begin{enumerate}
\item[]{\bf Step 1:} Given $(\V{u}_{k-1}, \V{B}_{k-1}) \in \V{V}^2 $ and $(p_{k-1}, \lambda_{k-1}) \in Q^2$, 
find $(\Vt{u}_{k},\Vt{B}_{k}) \in \VX^2$ solution of 
\begin{equation}
\label{eq:SIVS1}
\begin{cases}
a_1(\Vt{u}_{k}, \V{v})
+ c^*(\V{u}_{k-1}, \Vt{u}_{k}, \V{v})
- \kappa c^*(\V{B}_{k-1}, \Vt{B}_{k}, \V{v})
+ b(p_{k-1},\V{v}) &= \langle \V{f}, \V{v} \rangle \quad \forall \, \V{v} \in \VX.\\
a_2(\V{\tilde{B}}_{k}, \V{S})
+ c^*(\V{u}_{k-1}, \Vt{B}_{k},\V{S})
- c^*(\V{B}_{k-1}, \Vt{u}_{k},\V{S})
+ b(\lambda_{k-1},\V{S}) &= \langle \V{g}, \V{S} \rangle, \quad \forall \, \V{S} \in \VX.
\end{cases}
\end{equation}
\item[]{\bf Step 2:} Find $(\V{u}_{k}, p_{k}) \in \VX  \times Q$ solution of 
\begin{equation}
\label{eq:SIVS2}
\begin{aligned}
a_1(\V{u}_{k} - \Vt{u}_{k}, \V{v}) + 
  b(p_{k}-p_{k-1}, \V{v}) & = 0
, \qquad \forall \, \V{v} \in \VX
 \\
b(q,\V{u}_{k}) & = 0, \qquad \forall \, q \in Q.
\end{aligned}
\end{equation}
\item[]{\bf Step 3:} Find $(\V{B}_{k}, \lambda_{k})  \in \VX  \times Q$ solution of 
\begin{equation}
\label{eq:SIVS3}
\begin{aligned}
a_2( \V{B}_{k} -  \V{\tilde{B}}_{k}, \V{S})
+ b(\lambda_{k}-\lambda_{k-1}, \V{S}) & = 0,
\quad \forall \, \V{S} \in \VX 
 \\
b(\eta, \V{B}_{k}) & = 0, \quad \forall \, \eta \in Q.
\end{aligned}
\end{equation}
\end{enumerate}
\end{algorithm}
We have the following uniform boundedness and convergence results for our Algorithm \ref{algorithm:SIVS}.
\begin{theorem}[Uniform boundedness]\label{boundedness}
If 
\begin{equation}
	\Lambda_1 := \frac{2\Mbd \Nbd_0}{\min\{\nu,\mu\}^{3/2}} < \frac{1}{\sqrt{2}} \label{eq:smalldata},    
\end{equation}
then $\| \nabla \Vt{u}_k\|, \| \nabla \V{u}_k\| , \| \nabla p_k\|_{-1}$, and  $\| \nabla \Vt{B}_k\|, \| \nabla \V{B}_k\| , \| \nabla \lambda_k\|_{-1}$ are uniformly bounded, and as $k \rightarrow \infty$ there holds 
\begin{equation}
    \label{eq:ErrConv_Els}
    \begin{aligned}
\V{u}_k \xrightarrow{\V{V}} \V{u}, \, \Vt{u}_k \xrightarrow{\V{X}} \V{u}, \text{ and } \, \nabla p_k \xrightarrow{\Luinvmap} \nabla p, \\
\V{B}_k \xrightarrow{\V{V}} \V{B}, \, \Vt{B}_k \xrightarrow{\V{X}} \V{B}, \text{ and } \, \nabla \lambda_k \xrightarrow{\Lbinvmap} \nabla \lambda,
    \end{aligned}
\end{equation}
where $(\V{u},p,\V{B},\lambda)$ is the unique solution of  \eqref{eq:MHD_U}--\eqref{eq:MHD_Bmass}.
\end{theorem}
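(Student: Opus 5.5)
Since $\Lambda_1<1/\sqrt2<1$ and $\Mbd_0\le\Mbd$, we have $\Lambda_0\le\Lambda_1<1$. Theorem \ref{The-existence} therefore gives a unique solution $(\V{u},p,\V{B},\lambda)$ with $\|(\V{u},\V{B})\|_\VX\le\Nbd_0$. The plan is an energy argument carried out directly on the errors. This avoids needing a separate boundedness proof: once the error energy is nonincreasing, boundedness of the iterates follows from that of the exact solution.

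\emph{Error equations.} Introduce
\[
\Vt{e}_k=\V{u}-\Vt{u}_k,\quad \V{e}_k=\V{u}-\V{u}_k,\quad \delta_k=p-p_k,
\]
and analogously $\Vt{D}_k,\V{D}_k,r_k$ for the magnetic variables. Subtract \eqref{eq:SIVS1}--\eqref{eq:SIVS3} from \eqref{eq:exact1}--\eqref{eq:exact4}. Step 2 is equivalent to $\Lumap(\V{e}_k-\Vt{e}_k)+\nabla(\delta_k-\delta_{k-1})=0$ with $\divergence\V{e}_k=0$. Testing it with $\V{e}_k$ and with $\Luinvmap\nabla\delta_k$ gives the two standard identities from incremental projection analyses (as in \cite{SIVS_Takhirov2026}):
\[
\|\Vt{e}_k\|_\Lumap^2=\|\V{e}_k\|^2_\Lumap+\|\Vt{e}_k-\V{e}_k\|^2_\Lumap,
\qquad
\|\nabla\delta_k\|^2_{\Luinvmap}-\|\nabla\delta_{k-1}\|^2_{\Luinvmap}=\|\Vt{e}_k-\V{e}_k\|^2_\Lumap-2\langle\nabla(\delta_k-\delta_{k-1}),\ldots\rangle.
\]
The second one needs care; the cleanest route is to add Step 2 to the Step 1 error equation, which yields an equation in $\V{e}_k$ with pressure $\delta_k$ and a residual $a_1(\Vt{e}_k-\V{e}_k,\cdot)$. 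The same identities hold for $\Lbmap$, $\Vt{D}_k$, $\V{D}_k$, $r_k$.

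\emph{Energy inequality.} Test the Step 1 error equations with $(\Vt{e}_k,\kappa\Vt{D}_k)$. The skew-symmetric $c^*$ form makes $c^*(\V{u}_{k-1},\Vt{e}_k,\Vt{e}_k)$ and $c^*(\V{u}_{k-1},\Vt{D}_k,\Vt{D}_k)$ vanish. The two $\V{B}_{k-1}$ coupling terms cancel after scaling by $\kappa$, exactly as in Theorem \ref{Thm-MonoCVG}. What remains are terms of the form $c^*(\V{e}_{k-1},\V{u},\Vt{e}_k)$, $\kappa c^*(\V{D}_{k-1},\V{B},\Vt{e}_k)$, etc. By \eqref{eq:nnl} and the bound $\Nbd_0$, these are controlled by $\Lambda_1\|(\V{e}_{k-1},\V{D}_{k-1})\|_\VX\|(\Vt{e}_k,\Vt{D}_k)\|_\VX$, which Young's inequality bounds further by
\[
\tfrac12\|(\Vt{e}_k,\Vt{D}_k)\|^2+\tfrac{\Lambda_1^2}{2}\|(\V{e}_{k-1},\V{D}_{k-1})\|^2 .
\]
The pressure terms $b(\delta_{k-1},\Vt{e}_k)=b(\delta_{k-1},\Vt{e}_k-\V{e}_k)$ are handled with the Step 2 relation, which turns them into telescoping differences $\tfrac{1}{2}(\|\nabla\delta_{k-1}\|^2_{\Luinvmap}-\|\nabla\delta_k\|^2_{\Luinvmap})$ plus the increment $\|\Vt{e}_k-\V{e}_k\|^2_\Lumap$. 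The result should be an inequality of the form
\[
\omega_1 a_k+\omega_2 b_k+c_k\le(\omega_1-\varepsilon_1)a_{k-1}+(\omega_2-\varepsilon_2) b_{k-1}+c_{k-1},
\]
with $a_k=\nu\|\nabla\V{e}_k\|^2$ (or the $\Lumap$-norm), $b_k=\kappa\mu\|\nabla\V{D}_k\|^2$, and $c_k=\|\nabla\delta_k\|^2_{\Luinvmap}+\kappa\|\nabla r_k\|^2_{\Lbinvmap}$. The margin $\varepsilon_i>0$ is exactly where $2\Lambda_1^2<1$, i.e.\ \eqref{eq:smalldata}, is used. The $\gamma$ parts of the $\Lumap$ norms vanish on the divergence-free $\V{e}_k$.

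\emph{Conclusions.} Lemma \ref{lem:SeqConv0} gives $a_k,b_k\to0$ and $c_k$ bounded. Boundedness of $\V{u}_k,\V{B}_k,\nabla p_k,\nabla\lambda_k$ follows from the triangle inequality with the exact solution, and $\Vt{u}_k,\Vt{B}_k$ are bounded through the first identity. To get $\Vt{e}_k\to0$, sum the increments $\|\Vt{e}_k-\V{e}_k\|^2_\Lumap$, which the telescoped inequality shows to be summable. For the pressure convergence, take the combined error equation in $\V{e}_k$ with pressure $\delta_k$. It gives $\|\nabla\delta_k\|_{\Luinvmap}\lesssim\|\V{e}_k\|+\|\Vt{e}_k-\V{e}_k\|+\|\V{e}_{k-1}\|+\|\V{D}_{k-1}\|+\cdots$, which is the bound \eqref{eq:Contract2}. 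Hence $\delta_k\to0$, and Lemma \ref{ContractivitySequence} upgrades this to geometric convergence.

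The main obstacle is the pressure bookkeeping in the energy inequality. The lagged $b(\delta_{k-1},\Vt{e}_k)$ must be converted into telescoping norms with the correct sign, and the positive increment $\|\Vt{e}_k-\V{e}_k\|^2$ must not be consumed by the terms it has to absorb. If the direct splitting fails, I would first try testing Step 2 with $\Luinvmap\nabla(\delta_k-\delta_{k-1})$ to make the identity exact, and then rebalance the Young constants.
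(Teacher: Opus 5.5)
Your proposal follows the paper's proof essentially step for step: the same error equations, the Pythagorean identity and the polarization via $\Vt{e}_k-\V{e}_k=\Luinvmap\nabla(\delta_k-\delta_{k-1})$ that turns $b(\delta_{k-1},\Vt{e}_k-\V{e}_k)$ into telescoping pressure norms, testing Step 1 with $(\Vt{e}_k,\kappa\Vt{D}_k)$, the bound $\Lambda_1\|(\V{e}_{k-1},\V{D}_{k-1})\|_\VX\|(\Vt{e}_k,\Vt{D}_k)\|_\VX$, Lemma~\ref{lem:SeqConv0}, and finally inf--sup plus Lemma~\ref{ContractivitySequence}. The one slip is your stated Young split $\tfrac12\|(\Vt{e}_k,\Vt{D}_k)\|_\VX^2+\tfrac{\Lambda_1^2}{2}\|(\V{e}_{k-1},\V{D}_{k-1})\|_\VX^2$, which uses up the entire $\tfrac12\|(\Vt{e}_k,\Vt{D}_k)\|_\VX^2$ left after the pressure identity, so neither the increments nor $\Vt{e}_k$ come out summable and your pressure bound becomes circular; the paper instead uses $\tfrac14\|(\Vt{e}_k,\Vt{D}_k)\|_\VX^2+\Lambda_1^2\|(\V{e}_{k-1},\V{D}_{k-1})\|_\VX^2$, which keeps $\tfrac14\|(\Vt{e}_k,\Vt{D}_k)\|_\VX^2$ on the left and is precisely where $\Lambda_1<1/\sqrt2$ enters---this is the rebalancing you anticipated.
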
 
%
%
\begin{proof}
First, let us note that the uniqueness of the solution of \eqref{eq:MHD_U}--\eqref{eq:MHD_Bmass} follows from condition~\eqref{eq:smalldata}.
We define the errors as 
\begin{equation}
    \label{eq:err0}
    \begin{aligned}
\V{e}_k & =\V{u}-\V{u}_k, \, \Vt{e}_k=\V{u}-\Vt{u}_k \\
\delta_k & =p-p_k, \\
\V{D}_k & =\V{B}-\V{B}_k, \, \Vt{D}_k=\V{B}-\Vt{B}_k \\
r_k & = \lambda-\lambda_k.
    \end{aligned}
\end{equation}
Next we subtract \eqref{eq:SIVS1}-\eqref{eq:SIVS3} from \eqref{eq:MHD_U}-\eqref{eq:MHD_Bmass} to obtain the error equations $\forall \, \V{v} \in \V{X}$:
\begin{equation}\label{eq:err1}
\begin{aligned}  
a_1(\Vt{e}_{k},\V{v})
+ c^*(\V{e}_{k-1}, \V{u}, \V{v}) + c^*(\V{u}_{k-1}, \Vt{e}_k, \V{v})
- \kappa c^*(\V{D}_{k-1}, \V{B}, \V{v}) - \kappa c^*(\V{B}_{k-1}, \Vt{D}_k, \V{v})
+ b(\delta_{k-1},\V{v}) = 0, \\
a_2(\Vt{D}_{k},\V{S})
+ c^*(\V{e}_{k-1}, \V{B}, \V{S}) + c^*(\V{u}_{k-1}, \Vt{D}_k, \V{S})
- c^*(\V{D}_{k-1}, \V{u}, \V{S}) - c^*(\V{B}_{k-1}, \Vt{e}_k, \V{S})
+ b(r_{k-1},\V{S}) = 0,
\end{aligned}
\end{equation}
and
\begin{equation}\label{eq:err2}
\begin{aligned}
a_1(\V{e}_{k} - \Vt{e}_{k}, \V{v}) + b(\delta_k-\delta_{k-1},\V{v}) & = 0, \qquad \forall \, \V{v} \in \V{X}, \\
b(q, \V{e}_{k}) & = 0, \qquad \forall \, q \in Q,
\end{aligned}
\end{equation}
\begin{equation}\label{eq:err2.5}
\begin{aligned}
a_2(\V{D}_{k} - \Vt{D}_{k}, \V{S}) + b(r_k-r_{k-1},\V{S}) & = 0, \qquad \forall \, \V{S} \in \V{X}, \\
b(q, \V{D}_{k}) & = 0, \qquad \forall \, q \in Q.
\end{aligned}
\end{equation}
We first rewrite $b(\cdot, \cdot)$ terms. To this end, we test \eqref{eq:err2} with $\V{v} = \V{e}_k \in \V{V}$ to obtain that
\begin{equation}\label{eq:err3}
\nu \left( \|\nabla \V{e}_{k} \|^2 - \|\nabla \Vt{e}_{k} \|^2 + \|\nabla (\V{e}_{k}  - \Vt{e}_{k}) \|^2 \right) + 
\gamma \left( \|\divergence \V{e}_{k} \|^2 - \|\divergence \Vt{e}_{k} \|^2 + \|\divergence (\V{e}_{k}  - \Vt{e}_{k}) \|^2 \right) = 0.
\end{equation}
Moreover, the first equation of \eqref{eq:err2} implies that $\Vt{e}_{k}-\V{e}_{k} = \Luinvmap \nabla (\delta_{k}  - \delta_{k-1})$  in $\VX$, and that
\begin{equation}\label{eq:err4}
\|\nabla (\delta_{k}  - \delta_{k-1}) \|^2_{\Luinvmap} = \|\V{e}_{k}  - \Vt{e}_{k} \|^2_{\Lumap}
= \nu \|\nabla(\V{e}_{k}  - \Vt{e}_{k}) \|^2 + 
\gamma \|\divergence(\V{e}_{k}  - \Vt{e}_{k}) \|^2.
\end{equation}
Thus, 
\begin{equation}
    \label{eq:err5}
    \begin{aligned}        
b(\delta_{k-1}, \Vt{e}_k - \V{e}_k) & = \left\langle \nabla \delta_{k-1},  \Vt{e}_k - \V{e}_k \right \rangle \\ 
& = \left\langle \nabla \delta_{k-1}, \Luinvmap \nabla (\delta_{k}  - \delta_{k-1}) \right \rangle \\
& = \frac{1}{2} \left[\|\nabla \delta_{k} \|^2_{\Luinvmap} - \|\nabla \delta_{k-1} \|^2_{\Luinvmap} - \|\nabla (\delta_{k}  - \delta_{k-1}) \|^2_{\Luinvmap} \right] \\
& = \frac{1}{2} \left[\|\nabla \delta_{k} \|^2_{\Luinvmap} - \|\nabla \delta_{k-1} \|^2_{\Luinvmap} \right]  - \frac{\nu}{2}  \|\nabla (\V{e}_{k}  - \Vt{e}_{k}) \|^2
-\frac{\gamma}{2} \|\divergence(\V{e}_{k}  - \Vt{e}_{k}) \|^2 \\
& = \frac{1}{2} \left[\|\nabla \delta_{k} \|^2_{\Luinvmap} - \|\nabla \delta_{k-1} \|^2_{\Luinvmap} \right] +
\frac{\nu}{2} \left( \|\nabla \V{e}_{k} \|^2 - \|\nabla \Vt{e}_{k} \|^2 \right) + 
\frac{\gamma}{2} \left( \|\divergence \V{e}_{k} \|^2 - \|\divergence \Vt{e}_{k} \|^2\right),
    \end{aligned}
\end{equation}
where the last equality is deduced thanks to \eqref{eq:err3}. Similarly, 
\begin{equation}
    \label{eq:err5.5}
b(r_{k-1}, \Vt{D}_k - \V{D}_k) 
= \frac{1}{2} \left[\|\nabla r_{k} \|^2_{\Lbinvmap} - \|\nabla r_{k-1} \|^2_{\Lbinvmap} \right] + \frac{\mu}{2}\left( \|\nabla \V{D}_{k} \|^2 - \|\nabla \Vt{D}_{k} \|^2 \right) + 
\frac{\gamma}{2} \left( \|\divergence \V{D}_{k} \|^2 - \|\divergence \Vt{D}_{k} \|^2\right).
\end{equation}
By picking $(\V{v},\V{S})=(\Vt{e}_k,\Vt{D}_k)$ in \eqref{eq:err1} and using the skew-symmetry of the trilinear form $c^*(\cdot,\cdot,\cdot)$, we have
\begin{equation}\label{eq:err6}
\begin{aligned}  
a_1(\Vt{e}_{k},\Vt{e}_{k})
+ c^*(\V{e}_{k-1}, \V{u}, \Vt{e}_{k})
- \kappa c^*(\V{D}_{k-1}, \V{B}, \Vt{e}_{k}) - \kappa c^*(\V{B}_{k-1}, \Vt{D}_k, \Vt{e}_{k})
+ b(\delta_{k-1},\Vt{e}_{k}-\V{e}_k) = 0, \\
 a_2(\Vt{D}_{k},\Vt{D}_{k})
+  c^*(\V{e}_{k-1}, \V{B}, \Vt{D}_{k}) - c^*(\V{D}_{k-1}, \V{u}, \Vt{D}_{k}) - c^*(\V{B}_{k-1}, \Vt{e}_k, \Vt{D}_{k})
+  b(r_{k-1},\Vt{D}_{k}-\V{D}_{k}) = 0.
\end{aligned}
\end{equation}
We then multiply the second equation above by $\kappa$ and it to the first equation above, cancel the $\kappa c^*(\V{B}_{k-1}, \Vt{D}_k, \Vt{e}_{k})$ terms, and use the standard bounds to get
\begin{equation}\label{eq:err6.5}
\begin{aligned}  
\| (\Vt{e}_{k},\Vt{D}_{k}) \|^2_\V{X}
& + \gamma \left( \| \divergence \Vt{e}_{k} \|^2 + \| \divergence \Vt{D}_{k} \|^2 \right)
+ b(\delta_{k-1},\Vt{e}_{k}-\V{e}_k)
+ \kappa b(r_{k-1},\Vt{D}_{k}-\V{D}_{k}) 
\\
& = - c^*(\V{e}_{k-1}, \V{u}, \Vt{e}_{k})
+ \kappa c^*(\V{D}_{k-1}, \V{B}, \Vt{e}_{k}) 
- \kappa c^*(\V{e}_{k-1}, \V{B}, \Vt{D}_{k}) + \kappa c^*(\V{D}_{k-1}, \V{u}, \Vt{D}_{k}) 
\\ & \le
\Mbd \|\nabla \V{u} \| \left[\|\nabla \V{e}_{k-1} \|\, \|\nabla \Vt{e}_{k} \| + \kappa\|\nabla \V{D}_{k-1} \|\, \|\nabla \Vt{D}_{k} \| \right] 
\\ & + 
\Mbd \kappa \|\nabla \V{B} \| \left[\|\nabla \V{e}_{k-1} \|\, \|\nabla \Vt{D}_{k} \| + \|\nabla \V{D}_{k-1} \|\, \|\nabla \Vt{e}_{k} \| \right] 
\\ & \le 
\Mbd \Nbd_0\left[ \frac{\|\nabla \V{e}_{k-1} \|\, \|\nabla \Vt{e}_{k} \| + \kappa\|\nabla \V{D}_{k-1} \|\, \|\nabla \Vt{D}_{k} \|}{\sqrt{\nu}}
+ \sqrt{\kappa}\frac{\|\nabla \V{e}_{k-1} \|\, \|\nabla \Vt{D}_{k} \| + \|\nabla \V{D}_{k-1} \|\, \|\nabla \Vt{e}_{k} \|}{\sqrt{\mu}}\right]
\\ & \le
\frac{2\Mbd \Nbd_0}{\min\{\nu,\mu\}^{3/2}}
\sqrt{\nu \|\nabla \V{e}_{k-1} \|^2 + \kappa \mu \|\nabla \V{D}_{k-1} \|^2 }\,
\sqrt{\nu \|\nabla \Vt{e}_{k} \|^2 + \kappa \mu \|\nabla \Vt{D}_{k} \|^2 } 
\\ & \le
\frac{\| (\Vt{e}_{k},\Vt{D}_{k}) \|^2_\V{X}}{4}
+ \Lambda_1^2 \| (\V{e}_{k-1},\V{D}_{k-1}) \|^2_\V{X}.
\end{aligned}
\end{equation}
Combine the last identity with \eqref{eq:err5} and \eqref{eq:err5.5} to get
\begin{equation}\label{eq:err7}   
\begin{aligned}
\frac{1}{2} \| (\V{e}_{k},\V{D}_{k}) \|^2_\V{X} +  \frac{1}{4} \| (\Vt{e}_{k},\Vt{D}_{k}) \|^2_\V{X} 
& + \frac{\gamma}{2}\left( \| \nabla \cdot \Vt{e}_{k} \|^2 + \| \nabla \cdot \V{e}_{k} \|^2 + \kappa \| \nabla \cdot \Vt{D}_{k} \|^2 + \kappa \| \nabla \cdot \V{D}_{k} \|^2 \right) \\
& + \frac{ \|\nabla \delta_{k} \|^2_{\Luinvmap} + \kappa \|\nabla r_{k} \|^2_{\Lbinvmap}}{2} \\
& \le \frac{ \|\nabla \delta_{k-1} \|^2_{\Luinvmap} + \kappa \|\nabla r_{k-1} \|^2_{\Lbinvmap}}{2} +  \Lambda_1^2 \| (\V{e}_{k-1},\V{D}_{k-1}) \|^2_\V{X}.
\end{aligned}
\end{equation}
Now, thanks to the small data condition \eqref{eq:smalldata}, we can conclude that
\begin{equation}\label{eq:err8}   
\begin{aligned}
\frac{1}{2} \| (\V{e}_{k},\V{D}_{k}) \|^2_\V{X} +  \frac{1}{4} \| (\Vt{e}_{k},\Vt{D}_{k}) \|^2_\V{X} 
& + \frac{\gamma}{2}\left( \| \nabla \cdot \Vt{e}_{k} \|^2 + \| \nabla \cdot \V{e}_{k} \|^2 + \kappa \| \nabla \cdot \Vt{D}_{k} \|^2 + \kappa \| \nabla \cdot \V{D}_{k} \|^2 \right) \\
& + \frac{ \|\nabla \delta_{k} \|^2_{\Luinvmap} + \kappa \|\nabla r_{k} \|^2_{\Lbinvmap}}{2} \\
& \le \frac{ \|\nabla \delta_{0} \|^2_{\Luinvmap} + \kappa \|\nabla r_{0} \|^2_{\Lbinvmap}}{2} +  \Lambda_1^2 \| (\V{e}_{0},\V{D}_{0}) \|^2_\V{X} \\
& = \frac{ \|\nabla p_{0} \|^2_{\Luinvmap} + \kappa \|\nabla \lambda_{0} \|^2_{\Lbinvmap}}{2} +  \Lambda_1^2 \| (\V{u}_{0},\V{B}_{0}) \|^2_\V{X}.
\end{aligned}
\end{equation}
By the triangle inequality, we obtain the uniform boundedness of the solution:
\begin{equation}
\label{eq:err9}
\exists K_i>0, \, i=\overline{1,4}, \text{ with } \| (\V{u}_{k},\V{B}_{k}) \|_\V{X} \leq K_1, \, \| (\Vt{u}_{k},\Vt{B}_{k}) \|^2_\V{X} \leq K_2, \, 
\| \nabla p_k \|_{\Luinvmap} \le K_3 \text{ and } \| \nabla r_k \|_{\Lbinvmap} \le K_4.     
\end{equation}
Then, Lemma~\ref{lem:SeqConv0} implies that 
\begin{equation}
    \label{eq:err9.5}
    \begin{aligned}        
    \lim \limits_{k \rightarrow \infty} \| (\V{e}_{k},\V{D}_{k}) \|_\V{X} = \lim \limits_{k \rightarrow \infty} \| (\Vt{e}_{k},\Vt{D}_{k}) \|_\V{X} & = 0, \\ 
    \lim \limits_{k \rightarrow \infty} \left(\|\divergence \V{e}_{k} \|^2+\|\divergence \Vt{e}_{k} \|^2 + \kappa \|\divergence \V{D}_{k} \|^2 + \kappa\|\divergence \Vt{D}_{k} \|^2 \right) & = 0,
    \end{aligned}
\end{equation}
where we took
\begin{equation} \label{eq:err10}  
\begin{aligned}
a_k & = \| (\V{e}_{k},\V{D}_{k}) \|_\V{X}^2, \, b_k = \frac{1}{4}\| (\Vt{e}_{k},\Vt{D}_{k}) \|^2_\V{X}+ \frac{\gamma}{2} \left( \|\divergence \Vt{e}_{k} \|^2 + \|\divergence \V{e}_{k} \|^2 + \kappa \|\divergence \V{D}_{k} \|^2 + \kappa\|\divergence \Vt{D}_{k} \|^2\right), \\
c_k & = \frac{ \|\nabla \delta_{k} \|^2_{\Luinvmap} + \kappa \|\nabla r_{k} \|^2_{\Lbinvmap}}{2}, \\
\omega_1 & = \dfrac{1}{2},  \omega_2=1, \, \varepsilon_1 = \dfrac{1}{2} - \Lambda_1^2>0, \, \varepsilon_2 = 0.
\end{aligned}
\end{equation}
By applying the inf-sup condition in each equation of \eqref{eq:err1}, and the norm equivalence \eqref{eq:NormEquiv}, we obtain that
\begin{equation}
\label{eq:err11}
  \begin{aligned}
   \|\nabla \delta_k\|_{\Luinvmap} & \le C_1 \| (\Vt{e}_k,\Vt{D}_k) \|_{\V{X}} + 
   C_2 \| (\V{e}_{k-1},\V{D}_{k-1}) \|_{\V{X}}
   + \gamma \| \divergence \Vt{e}_k \|, 
   \\
   \|\nabla r_k\|_{\Lbinvmap} & \le C_3 \| (\Vt{e}_k,\Vt{D}_k) \|_{\V{X}} + 
   C_4 \| (\V{e}_{k-1},\V{D}_{k-1}) \|_{\V{X}}
   + \gamma \| \divergence \Vt{B}_k \,|   
    \end{aligned}
\end{equation}
for some positive constants $C_j$, $j=\overline{1,4}$.
Then \eqref{eq:err11} implies a bound equivalent to \eqref{eq:Contract2}. By invoking Lemma \ref{ContractivitySequence}, we can conclude geometric convergence. 

Ne{\'c}as' inequality
   \begin{equation}
        \label{eq:Necas}
      \exists C > 0 \;  \forall q \in Q: \| \nabla  q \|_{-1} \ge C \| q\|,
    \end{equation}
also shows the boundedness and convergence of the Lagrange multipliers in the $\| \cdot \|$ norm.    
\end{proof}
\subsubsection{The linear system for solving Algorithm \ref{algorithm:SIVS}}\label{subsec:SIVSSystem}
Keeping the notations of Subsection \ref{subsec:PicardSystem}, 
the system \eqref{eq:SIVS1} is equivalent to solving 
\begin{equation}
    \label{eq:SIVSSystem1}
    A_{k-1}\overrightarrow{X}_{1,k} =\overrightarrow{F} - B^T,
\end{equation}
while the equation \eqref{eq:SIVS2} is equivalent to
\begin{equation}\label{eq:SIVSSystem2}
\begin{aligned}  
\begin{bmatrix} 
\widetilde{A}_1 & \widehat{B}^T \\ 
\widehat{B} &  0 
\end{bmatrix}
\begin{bmatrix} \overrightarrow{X}_{\V{u},k}  \\ 
\delta \overrightarrow{X}_{p,k}
\end{bmatrix}
& = \begin{bmatrix} \widetilde{A}_1\overrightarrow{X}_{\Vt{u},k} - B^T \overrightarrow{X}_{p,k-1} \\ \overrightarrow{0} \end{bmatrix} \implies \\
\begin{bmatrix} 
\widetilde{A}_1 & 0 \\ 
\widehat{B} &  -\widehat{B} \widetilde{A}_1^{-1} \widehat{B}^T 
\end{bmatrix}
\begin{bmatrix} 
I & \widetilde{A}_1^{-1} \widehat{B}^T \\
0 & I 
\end{bmatrix} 
\begin{bmatrix} 
\overrightarrow{X}_{\V{u},k}  \\ 
\delta \overrightarrow{X}_{p,k}
\end{bmatrix}
& = 
\begin{bmatrix} \widetilde{A}_1\overrightarrow{X}_{\Vt{u},k} - B^T \overrightarrow{X}_{p,k-1} \\ \overrightarrow{0} 
\end{bmatrix}.
\end{aligned}
\end{equation}
In this case, the Schur matrix $S_1:=-\widehat{B} \widetilde{A}^{-1} \widehat{B}^T  \in \mathbb{R}^{2m \times 2m} $ is independent of $k$, and thus needs to be assembled and preconditioned only once. The linear system arising from \eqref{eq:SIVS3} is handled similarly.
\section{SIVS - Els{\"a}sser formulation}\label{sec:SIVS_Els}
Even though the Steady Incremental Viscosity Splitting Algorithm \ref{algorithm:SIVS} requires a solution of simpler linear systems compared to the classical Picard Algorithm \ref{algorithm:Picard}, the first step \eqref{eq:SIVS1} couples the intermediate velocity and magnetic fields $(\Vt{u}_{k},\Vt{B}_{k})$, which requires a solution of a large linear system. In this section, we will construct a novel IVS scheme based on the Els{\"a}sser reformulation of the MHD system \eqref{eq:MHD_U}-\eqref{eq:MHD_Bmass} that replaces \eqref{eq:SIVS1} with two decoupled linear systems of a smaller size.

To this end, we introduce the following Els{\"a}sser variables \cite{AGGUL202372}:
\begin{equation}
    \label{eq:ElsVariables}
    \begin{aligned}
    \V{w} &:= \V{u} + \sqrt{\kappa}\V{B}, \, \pplus: = p + \sqrt{\kappa} \lambda, \\
    \V{z} &:= \V{u}  - \sqrt{\kappa}\V{B}, \, \pminus: = p - \sqrt{\kappa} \lambda, \\
    \fplus &:=  \V{f} + \sqrt{\kappa} \V{g}\, , \fminus:=  \V{f} - \sqrt{\kappa}\V{g}, \\
    \nuplus &:=\frac{\nu+\mu}{2}, \, \numinus :=\frac{\nu-\mu}{2}.
    \end{aligned}
\end{equation}
To obtain a system for $(\V{w},\pplus,\V{z},\pminus)$, we first scale the equation \eqref{eq:MHD_B} by $\sqrt{\kappa}$, then add it to and subtract it from \eqref{eq:MHD_U}, resulting in the following system:
\begin{align}
 - \nuplus \Delta \V{w} + \V{z} \cdot \nabla \V{w} + \nabla \pplus 
 & = \fplus  + \numinus \Delta \V{z}
, \label{eq:Els1} \\
\nabla \cdot \V{w} &= 0, \label{eq:mass_Els1} 
\\
- \nuplus \Delta \V{z} + \V{w} \cdot \nabla \V{z} + \nabla \pminus  
& 
=  \fminus + \numinus \Delta \V{w}, \label{eq:Els2} \\
\nabla \cdot \V{z} &= 0. \label{eq:mass_Els2}
\end{align}
First, we prove an a priori bound for the solution of the system \eqref{eq:Els1}-\eqref{eq:mass_Els2}:
\begin{lemma}\label{lem:Stability_Els}
Any solution $(\V{w},\V{z})$ of \eqref{eq:Els1}-\eqref{eq:mass_Els2} satisfies 
\begin{equation}\label{eq:Stability_Els}
 \|(\nabla \V{w},\nabla \V{z})\|:=\sqrt{\| \nabla \V{w} \|^2 + \| \nabla \V{z} \|^2} \le \,
\frac{2\nuplus}{\nu\, \mu} 
\sqrt{\|\fplus\|^2_{-1}+\|\fminus\|^2_{-1}} := \; \Nbd_1.
\end{equation}
Additionally, if 
\begin{equation}
	\Lambda_2 := \frac{\Mbd \Nbd_1+|\numinus|}{\nuplus} < 1,
    \label{eq:smalldataUniq_Els}
\end{equation}
then the solution of \eqref{eq:Els1}-\eqref{eq:mass_Els2} is unique. 
\end{lemma}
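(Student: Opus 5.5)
The plan is to argue directly on the Els{\"a}sser system \eqref{eq:Els1}--\eqref{eq:mass_Els2}, in weak form with test spaces $\V{V}$ for both $\V{w}$ and $\V{z}$. The convective terms are handled by skew-symmetry, and the coupling term $\numinus\Delta$ is absorbed by the dominant diffusion $\nuplus$. Since $\nuplus-|\numinus| = \min\{\nu,\mu\}>0$, this absorption is always possible.

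\textbf{A priori bound.} I will test \eqref{eq:Els1} with $\V{w}$ and \eqref{eq:Els2} with $\V{z}$, and add the results. The pressures drop out because $\V{w},\V{z}\in\V{V}$. Because $\divergence\V{z}=\divergence\V{w}=0$, we have $c(\V{z},\V{w},\V{w}) = c(\V{w},\V{z},\V{z})=0$. The cross-diffusion terms give $-\numinus\big[(\nabla\V{z},\nabla\V{w})+(\nabla\V{w},\nabla\V{z})\big] = -2\numinus(\nabla\V{w},\nabla\V{z})$. Combining these,
\begin{equation*}
\nuplus\big(\|\nabla\V{w}\|^2+\|\nabla\V{z}\|^2\big) = \langle\fplus,\V{w}\rangle+\langle\fminus,\V{z}\rangle - 2\numinus(\nabla\V{w},\nabla\V{z}).
\end{equation*}
I will bound $2|\numinus||(\nabla\V{w},\nabla\V{z})|\le|\numinus|(\|\nabla\V{w}\|^2+\|\nabla\V{z}\|^2)$. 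For the data terms, Cauchy--Schwarz in $\mathbb{R}^2$ gives the bound $\sqrt{\|\fplus\|_{-1}^2+\|\fminus\|_{-1}^2}\,\|(\nabla\V{w},\nabla\V{z})\|$. Dividing through yields $\|(\nabla\V{w},\nabla\V{z})\|\le \sqrt{\|\fplus\|^2_{-1}+\|\fminus\|^2_{-1}}/\min\{\nu,\mu\}$. Finally, $\frac{2\nuplus}{\nu\mu}=\frac1\nu+\frac1\mu\ge\frac{1}{\min\{\nu,\mu\}}$, which gives \eqref{eq:Stability_Els}.

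\textbf{Uniqueness.} Let $(\V{w}_i,\V{z}_i)$, $i=1,2$, be two solutions, and set $\V{e}_w=\V{w}_1-\V{w}_2$ and $\V{e}_z=\V{z}_1-\V{z}_2$. Subtracting the equations, I will split the convective difference as
\begin{equation*}
\V{z}_1\cdot\nabla\V{w}_1-\V{z}_2\cdot\nabla\V{w}_2=\V{e}_z\cdot\nabla\V{w}_1+\V{z}_2\cdot\nabla\V{e}_w,
\end{equation*}
and analogously for the second equation. I then test with $\V{e}_w$ and $\V{e}_z$ respectively. The terms $c(\V{z}_2,\V{e}_w,\V{e}_w)$ and $c(\V{w}_2,\V{e}_z,\V{e}_z)$ vanish. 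The cross-diffusion is treated exactly as above. This leaves
\begin{equation*}
\nuplus S \le |\numinus| S + \Mbd\big(\|\nabla\V{w}_1\|+\|\nabla\V{z}_1\|\big)\|\nabla\V{e}_w\|\|\nabla\V{e}_z\|,
\qquad S:=\|\nabla\V{e}_w\|^2+\|\nabla\V{e}_z\|^2 ,
\end{equation*}
where I use $\Mbd_0\le\Mbd$ (or work with $c^*$ throughout).

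The only mildly delicate step is matching the constant $\Mbd\Nbd_1$ in \eqref{eq:smalldataUniq_Els}. For this I use $\|\nabla\V{w}_1\|+\|\nabla\V{z}_1\|\le\sqrt2\,\Nbd_1$, which follows from \eqref{eq:Stability_Els}, together with $\|\nabla\V{e}_w\|\|\nabla\V{e}_z\|\le S/2$. The nonlinear term is then at most $\frac{\sqrt2}{2}\Mbd\Nbd_1 S\le\Mbd\Nbd_1 S$. Hence $\nuplus S\le(\Mbd\Nbd_1+|\numinus|)S$, that is, $(1-\Lambda_2)S\le0$. Under \eqref{eq:smalldataUniq_Els} this forces $S=0$, so $\V{w}_1=\V{w}_2$ and $\V{z}_1=\V{z}_2$. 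Uniqueness of $\pplus$ and $\pminus$ then follows from the inf--sup condition.
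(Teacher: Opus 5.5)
Your proof is correct and follows the paper's structure. You test the two equations with $\mathbf{w}$ and $\mathbf{z}$, kill the convective terms by skew-symmetry, absorb the $\nu^{-}$ coupling into the $\nu^{+}$ diffusion, and prove uniqueness by the same difference argument; the paper bounds $\|\nabla\mathbf{w}_1\|$ and $\|\nabla\mathbf{z}_1\|$ each by $\mathcal{N}_1$ instead of using your $\sqrt{2}$ refinement, with the same conclusion. The only real difference is bookkeeping in the a priori bound: the paper applies weighted Young inequalities to each equation separately and uses $(\nu^{+})^2-(\nu^{-})^2=\nu\mu$ so the $(\nu^{-})^2$ terms cancel after summation, landing exactly on $\mathcal{N}_1$, whereas you sum first and use $\nu^{+}-|\nu^{-}|=\min\{\nu,\mu\}$, which gives the sharper intermediate bound $\sqrt{\|\mathbf{f}^{+}\|_{-1}^2+\|\mathbf{f}^{-}\|_{-1}^2}\,/\min\{\nu,\mu\}$ before you relax it to $\mathcal{N}_1$.
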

\begin{proof}
Multiply \eqref{eq:Els1} by $\V{v}=\V{w}$ and integrate it over $\Omega$, yielding 
\begin{equation}
\label{eq:Els_Apriori1}
\begin{aligned}
\nuplus \|\nabla \V{w}\|^2 &= 
\langle \fplus,\V{w} \rangle
- \numinus (\nabla \V{z},\nabla \V{w})\\
& \le \frac{\nuplus}{\, \nu \, \mu}\|\fplus\|^2_{-1} + \frac{\nu \mu}{4\nuplus}\|\nabla \V{w}\|^2
+ \frac{\nuplus}{2}\|\nabla \V{w}\|^2
+ \frac{(\numinus)^2}{2\nuplus}\|\nabla \V{z}\|^2 \implies \\
\frac{2 (\numinus)^2+\mu\nu}{4 \nuplus}\|\nabla \V{w}\|^2 & \le \frac{\nuplus}{\, \nu \, \mu}\|\fplus\|^2_{-1}
+ \frac{(\numinus)^2}{2\nuplus}\|\nabla \V{z}\|^2.
 \end{aligned}
 \end{equation}
 Similarly, multiplying \eqref{eq:Els2} by $\V{v}=\V{z}$ and integrating gives
\begin{equation}
    \label{eq:Els_Apriori2}
    \frac{2 (\numinus)^2+\mu\nu}{4 \nuplus}\|\nabla \V{z}\|^2  \le \frac{\nuplus}{\, \nu \, \mu}\|\fminus\|^2_{-1}
+ \frac{(\numinus)^2}{2\nuplus}\|\nabla \V{w}\|^2.
\end{equation}
Combining \eqref{eq:Els_Apriori1} and \eqref{eq:Els_Apriori2} yields the bound \eqref{eq:Stability_Els}. 

To show the uniqueness of the solution, assume $(\V{w}_1,\V{z}_1)$ and $(\V{w}_2,\V{z}_2)$ solve \eqref{eq:Els1}-\eqref{eq:mass_Els2}. Then their differences $\delta \V{w}:=\V{w}_1-\V{w}_2$ and $\delta \V{z}:=\V{z}_1-\V{z}_2$ satisfy:
\begin{equation}\label{eq:Uniquness_Els1}
\begin{aligned}  
\nuplus (\| \nabla \delta \V{w}\|^2+\| \nabla \delta \V{z}\|^2)
& = - c^*(\delta \V{z}, \V{w}_1, \delta \V{w}) - c^*(\delta \V{w}, \V{z}_1, \delta \V{z}) - 2\numinus (\nabla \delta \V{z}, \nabla  \delta \V{w}) \\ 
& \le 2(\Nbd_1 \Mbd + |\numinus|) \| \nabla \delta \V{w}\|\, \| \nabla \delta \V{z}\| \\ 
& \le \frac{\Nbd_1 \Mbd + |\numinus|}{\nuplus}(\| \nabla \delta \V{w}\|^2+\| \nabla \delta \V{z}\|^2).
\end{aligned}
\end{equation}
Owing to the assumption \eqref{eq:smalldataUniq_Els}, we obtain the uniqueness of the solution.
\end{proof}
 \begin{algorithm}\label{algorithm:SIVS_Els}
Let $(\V{w}_0,\pplus_0,\V{z}_0,\pminus_0) = (\V{0},0,\V{0},0)$ and for $k = 1, 2, \ldots $, compute the following steps until convergence: 
\begin{enumerate}
\item[]{\bf Step 1:} Given $(\V{z}_{k-1}, \pplus_{k-1}) \in \V{V} \times Q$, find $\Vt{w}_{k} \in \VX$ solution of 
\begin{equation}
\label{eq:SIVS1_Els}
a_1(\Vt{w}_{k}, \V{v})
+ c^*(\V{z}_{k-1}, \Vt{w}_{k}, \V{v})
+ b(\pplus_{k-1},\V{v}) = \langle \fplus, \V{v} \rangle - \numinus(\nabla \V{z}_{k-1},\nabla \V{v}), \quad \forall \, \V{v} \in \VX.
\end{equation}
\item[]{\bf Step 2:} Find $(\V{w}_{k}, \pplus_{k}) \in \VX  \times Q$ solution of 
\begin{equation}
\label{eq:SIVS2_Els}
\begin{aligned}
a_1(\V{w}_{k} - \Vt{w}_{k}, \V{v}) + 
  b(\pplus_{k}-\pplus_{k-1}, \V{v}) & = 0
, \qquad \forall \, \V{v} \in \VX, \\
b(q,\V{w}_{k}) & = 0, \qquad \forall \, q \in Q.
\end{aligned}
\end{equation}
\item[]{\bf Step 3:} Given $(\V{w}_{k-1}, \pminus_{k-1}) \in \V{V} \times Q$, find $\Vt{z}_{k} \in \VX$ solution of 
\begin{equation}
\label{eq:SIVS3_Els}
a_1(\Vt{z}_{k}, \V{v})
+ c^*(\V{w}_{k-1}, \Vt{z}_{k}, \V{v})
+ b(\pminus_{k-1},\V{v}) = \langle \fminus, \V{v} \rangle - \numinus(\nabla \V{w}_{k-1},\nabla \V{v}), \quad \forall \, \V{v} \in \VX.
\end{equation}
\item[]{\bf Step 4:} Find $(\V{z}_{k}, \pminus_{k}) \in \VX  \times Q$ solution of 
\begin{equation}
\label{eq:SIVS4_Els}
\begin{aligned}
a_1(\V{z}_{k} - \Vt{z}_{k}, \V{v}) + 
  b(\pminus_{k}-\pminus_{k-1}, \V{v}) & = 0
, \qquad \forall \, \V{v} \in \VX, \\
b(q,\V{z}_{k}) & = 0, \qquad \forall \, q \in Q.
\end{aligned}
\end{equation}
\end{enumerate}
\end{algorithm}
Next, we establish the uniform boundedness and convergence results for our Algorithm \ref{algorithm:SIVS_Els}.
\begin{theorem}[Uniform boundedness]\label{boundedness}
If 
\begin{equation}
	\Lambda_3 := \frac{\Mbd \Nbd_1+|\numinus|}{\nuplus} < \frac{1}{\sqrt{2}},
    \label{eq:smalldata_Els}
\end{equation}
then $\| \nabla \Vt{w}_k\|, \| \nabla \V{w}_k\| , \| \nabla \pplus_k\|_{-1}$, and  $\| \nabla \Vt{z}_k\|, \| \nabla \V{z}_k\| , \| \nabla \pminus_k\|_{-1}$ are uniformly bounded, and as $k \rightarrow \infty$ there holds 
\begin{equation}
    \label{eq:ErrConv_Els}
    \begin{aligned}
\V{w}_k \xrightarrow{\V{V}} \V{w}, \, \Vt{w}_k \xrightarrow{\V{X}} \V{w}, \text{ and } \, \nabla \pplus_k \xrightarrow{\Luinvmap} \nabla \pplus, \\
\V{z}_k \xrightarrow{\V{V}} \V{z}, \, \Vt{z}_k \xrightarrow{\V{X}} \V{z}, \text{ and } \, \nabla \pminus_k \xrightarrow{\Lbinvmap} \nabla \pminus,
    \end{aligned}
\end{equation}
where $(\V{w},\pplus,\V{z},\pminus)$ is the unique solution of  \eqref{eq:Els1}--\eqref{eq:mass_Els2}.
\end{theorem}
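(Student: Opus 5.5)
The plan is to follow the proof of Theorem~\ref{boundedness} for Algorithm~\ref{algorithm:SIVS} step by step. The two decoupled subproblems for $\V{w}$ and $\V{z}$ are handled symmetrically, and the lagged cross-diffusion term $\numinus(\nabla\V{z}_{k-1},\nabla\V{v})$ is treated as an additional explicit perturbation, on the same footing as the lagged convection.

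\textbf{Setup and reading of the viscous form.} Since \eqref{eq:smalldata_Els} implies $\Lambda_3<1$, i.e. \eqref{eq:smalldataUniq_Els}, Lemma~\ref{lem:Stability_Els} gives a unique solution $(\V{w},\pplus,\V{z},\pminus)$ with $\|(\nabla\V{w},\nabla\V{z})\|\le\Nbd_1$. I read the diffusive form in Steps 1--4 as the one matching \eqref{eq:Els1}--\eqref{eq:Els2}, namely $a(\V{w},\V{v})=\nuplus(\nabla\V{w},\nabla\V{v})+\gamma(\divergence\V{w},\divergence\V{v})$. I define $\mathrm{T}$, $\|\cdot\|_{\mathrm{T}}$ and $\|\cdot\|_{\mathrm{T}^{-1}}$ accordingly, as in \eqref{eq:Tnorm}, and these norms replace $\Luinvmap$ and $\Lbinvmap$ in \eqref{eq:ErrConv_Els}. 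This reading is what makes the constant in \eqref{eq:smalldata_Els} come out as stated.

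\textbf{Error equations and the pressure-splitting identity.} Set $\V{e}_k=\V{w}-\V{w}_k$, $\Vt{e}_k=\V{w}-\Vt{w}_k$, $\V{E}_k=\V{z}-\V{z}_k$, $\Vt{E}_k=\V{z}-\Vt{z}_k$, $\delta_k=\pplus-\pplus_k$ and $r_k=\pminus-\pminus_k$. Subtracting the scheme from the weak form of \eqref{eq:Els1} gives, for all $\V{v}\in\VX$,
\begin{equation*}
a(\Vt{e}_k,\V{v})+c^*(\V{E}_{k-1},\V{w},\V{v})+c^*(\V{z}_{k-1},\Vt{e}_k,\V{v})+b(\delta_{k-1},\V{v})=-\numinus(\nabla\V{E}_{k-1},\nabla\V{v}).
\end{equation*}
The equation for $\Vt{E}_k$ is the same with the roles of $\V{w}$ and $\V{z}$ swapped. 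Steps 2 and 4 give projection error equations identical to \eqref{eq:err2}. Exactly as in \eqref{eq:err3}--\eqref{eq:err5}, I obtain
\begin{equation*}
b(\delta_{k-1},\Vt{e}_k-\V{e}_k)=\tfrac12\big[\|\nabla\delta_k\|^2_{\mathrm{T}^{-1}}-\|\nabla\delta_{k-1}\|^2_{\mathrm{T}^{-1}}\big]+\tfrac{\nuplus}{2}\big(\|\nabla\V{e}_k\|^2-\|\nabla\Vt{e}_k\|^2\big)+\tfrac{\gamma}{2}\big(\|\divergence\V{e}_k\|^2-\|\divergence\Vt{e}_k\|^2\big),
\end{equation*}
and the analogous identity for $r_k$.

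\textbf{Energy estimate.} I test with $\V{v}=\Vt{e}_k$ and with $\Vt{E}_k$, respectively. Since $\V{z}_{k-1}$ and $\V{w}_{k-1}$ lie in $\V{V}$, skew-symmetry kills $c^*(\V{z}_{k-1},\Vt{e}_k,\Vt{e}_k)$ and its counterpart, and I add the two equations. Each remaining right-hand side term is bounded by $(\Mbd\Nbd_1+|\numinus|)\|\nabla\V{E}_{k-1}\|\|\nabla\Vt{e}_k\|$, or by the same expression with $\V{e}$ and $\V{E}$ swapped. Cauchy--Schwarz in $\mathbb{R}^2$ then bounds their sum by
\begin{equation*}
\Lambda_3\nuplus\,\|(\nabla\V{e}_{k-1},\nabla\V{E}_{k-1})\|\,\|(\nabla\Vt{e}_k,\nabla\Vt{E}_k)\|,
\end{equation*}
and Young's inequality bounds this by $\tfrac{\nuplus}{4}\|(\nabla\Vt{e}_k,\nabla\Vt{E}_k)\|^2+\Lambda_3^2\nuplus\|(\nabla\V{e}_{k-1},\nabla\V{E}_{k-1})\|^2$. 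Inserting the splitting identities gives the analogue of \eqref{eq:err7}:
\begin{equation*}
\tfrac{\nuplus}{2}\|(\nabla\V{e}_k,\nabla\V{E}_k)\|^2+\tfrac{\nuplus}{4}\|(\nabla\Vt{e}_k,\nabla\Vt{E}_k)\|^2+\tfrac{\gamma}{2}(\cdots)+\tfrac12 P_k\le\tfrac12P_{k-1}+\Lambda_3^2\nuplus\|(\nabla\V{e}_{k-1},\nabla\V{E}_{k-1})\|^2,
\end{equation*}
where $P_k=\|\nabla\delta_k\|^2_{\mathrm{T}^{-1}}+\|\nabla r_k\|^2_{\mathrm{T}^{-1}}$ and $(\cdots)$ collects the divergence terms as in \eqref{eq:err7}.

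\textbf{Conclusion.} Because $\Lambda_3^2<\tfrac12$, this inequality has the form required by Lemma~\ref{lem:SeqConv0}, with $\omega_1=\nuplus/2$ and $\varepsilon_1=\nuplus(\tfrac12-\Lambda_3^2)$. Telescoping it yields uniform bounds, and the triangle inequality turns these into bounds on the iterates. It also gives convergence of the velocity-type errors and the divergence terms to $0$. The inf-sup condition applied to the error equation for $\Vt{e}_k$, together with \eqref{eq:NormEquiv}, gives $\|\nabla\delta_{k-1}\|_{\mathrm{T}^{-1}}\lesssim\|\nabla\Vt{e}_k\|+\|\nabla\V{E}_{k-1}\|+\gamma\|\divergence\Vt{e}_k\|$, and similarly for $r_{k-1}$. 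This is a bound of the type \eqref{eq:Contract2}, so Lemma~\ref{ContractivitySequence} gives geometric convergence including the pressures, and Ne\v{c}as' inequality \eqref{eq:Necas} transfers the result to $L^2$.

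\textbf{Expected obstacle.} The hard step is the energy estimate. Unlike Algorithm~\ref{algorithm:SIVS}, there is no cancellation between the two subproblems, so both the lagged convection and the lagged $\numinus$-diffusion must be absorbed purely through smallness. One has to check that the Young split leaves exactly the coefficient $\nuplus/2$ on $\|\nabla\V{e}_k\|^2$ against $\Lambda_3^2\nuplus$ on $\|\nabla\V{e}_{k-1}\|^2$; this is precisely where the threshold $1/\sqrt{2}$ comes from.
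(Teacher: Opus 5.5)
Your proposal is correct and follows the paper's proof essentially step for step. It uses the same error equations and the same pressure-splitting identity, tests with $\Vt{e}_k$ and its $\V{z}$-counterpart, and makes the same Young split, giving coefficients $\nuplus/2$ and $\nuplus/4$ against $\Lambda_3^2\nuplus$ (hence the threshold $1/\sqrt{2}$), before concluding with Lemma~\ref{lem:SeqConv0}, the inf-sup bound and Lemma~\ref{ContractivitySequence}. Your reading of the diffusive form as $\nuplus(\nabla\V{w},\nabla\V{v})+\gamma(\divergence\V{w},\divergence\V{v})$ is what the paper's proof tacitly uses in its splitting identity; with $a_1$ as printed, the Els\"asser solution would not be a fixed point of Step~1 unless $\nu=\mu$, so this is a consistent correction rather than a deviation.
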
 
%
%
\begin{proof}
First, let us note that the uniqueness of the solution of \eqref{eq:Els1}--\eqref{eq:mass_Els2} follows from condition~\eqref{eq:smalldata_Els}.

Define the errors as 
\begin{equation}
    \label{eq:err0_Els}
    \begin{aligned}
\V{e}_k & =\V{w}-\V{w}_k, \, \Vt{e}_k=\V{w}-\Vt{w}_k \\
\delta_k & = \pplus-\pplus_k, \\
\V{D}_k & =\V{z}-\V{z}_k, \, \Vt{D}_k=\V{z}-\Vt{z}_k \\
r_k & = \pminus-\pminus_k.
    \end{aligned}
\end{equation}
Next we obtain the equations satisfied by $(\Vt{e}_k,\V{e}_k,\delta_k)$ and $(\Vt{D}_k,\V{D}_k,r_k)$ by subtracting \eqref{eq:SIVS1_Els}-\eqref{eq:SIVS4_Els} from \eqref{eq:Els1}-\eqref{eq:mass_Els2}:
\begin{equation}\label{eq:err1_Els}
a_1(\Vt{e}_{k},\V{v})
+ c^*(\V{D}_{k-1}, \V{w}, \V{v}) + c^*(\V{z}_{k-1}, \Vt{e}_k, \V{v})
+ b(\delta_{k-1},\V{v}) = -\numinus(\nabla \V{D}_{k-1},\nabla \V{v}) 
\, \forall \V{v} \in \VX,
\end{equation}
\begin{equation}\label{eq:err2_Els}
\begin{aligned}
a_1(\V{e}_{k} - \Vt{e}_{k}, \V{v}) + b(\delta_k-\delta_{k-1},\V{v}) & = 0, \qquad \forall \, \V{v} \in \V{X}, \\
b(q, \V{e}_{k}) & = 0, \qquad \forall \, q \in Q,
\end{aligned}
\end{equation}
\begin{equation}\label{eq:err3_Els}
a_1(\Vt{D}_{k},\V{v})
+ c^*(\V{e}_{k-1}, \V{z}, \V{v}) + c^*(\V{w}_{k-1}, \Vt{D}_k, \V{v})
+ b(r_{k-1},\V{v}) = -\numinus(\nabla \V{e}_{k-1},\nabla \V{v})\, \forall \V{v} \in \VX,
\end{equation}
\begin{equation}\label{eq:err4_Els}
\begin{aligned}
a_2(\V{D}_{k} - \Vt{D}_{k}, \V{S}) + b(r_k-r_{k-1},\V{S}) & = 0, \qquad \forall \, \V{S} \in \V{X}, \\
b(q, \V{D}_{k}) & = 0, \qquad \forall \, q \in Q.
\end{aligned}
\end{equation}
Then we test \eqref{eq:err1_Els} with $\V{v} = \Vt{e}_k \in \V{V}$ and \eqref{eq:err3_Els} with $\V{v} = \Vt{D}_k \in \V{V}$, respectively. Following \eqref{eq:err5}, we can rewrite $b(\cdot, \cdot)$ terms as 
\begin{equation}
    \label{eq:err5_Els}
    \begin{aligned}        
b(\delta_{k-1}, \Vt{e}_k - \V{e}_k) & = \frac{\|\nabla \delta_{k} \|^2_{\Luinvmap} - \|\nabla \delta_{k-1} \|^2_{\Luinvmap}}{2} +
\frac{\nuplus}{2} \left( \|\nabla \V{e}_{k} \|^2 - \|\nabla \Vt{e}_{k} \|^2 \right) + 
\frac{\gamma}{2} \left( \|\divergence \V{e}_{k} \|^2 - \|\divergence \Vt{e}_{k} \|^2\right), \\
b(\lambda_{k-1}, \Vt{D}_k - \V{D}_k) & = \frac{\|\nabla r_{k} \|^2_{\Lbinvmap} - \|\nabla r_{k-1} \|^2_{\Lbinvmap}}{2} + \frac{\nuplus}{2}\left( \|\nabla \V{D}_{k} \|^2 - \|\nabla \Vt{D}_{k} \|^2 \right) + 
\frac{\gamma}{2} \left( \|\divergence \V{D}_{k} \|^2 - \|\divergence \Vt{D}_{k} \|^2\right).
    \end{aligned}
\end{equation}
Now, picking $(\V{v},\V{S})=(\Vt{e}_k,\Vt{D}_k)$ in \eqref{eq:err1_Els} and \eqref{eq:err3_Els}, and using the skew-symmetry of the trilinear form $c^*(\cdot,\cdot,\cdot)$, we have
\begin{equation}\label{eq:err6_Els}
\begin{aligned}  
a_1(\Vt{e}_{k},\Vt{e}_{k})
+ c^*(\V{D}_{k-1}, \V{w}, \Vt{e}_{k})
+ b(\delta_{k-1},\Vt{e}_{k}-\V{e}_k) = 
- \numinus(\nabla \V{D}_{k-1},\nabla\Vt{e}_{k}), \\
a_1(\Vt{D}_{k},\Vt{D}_{k})
+  c^*(\V{e}_{k-1}, \V{z}, \Vt{D}_{k})
+  b(r_{k-1},\Vt{D}_{k}-\V{D}_{k}) = 
-\numinus (\nabla \V{e}_{k-1},\nabla \Vt{D}_{k}).
\end{aligned}
\end{equation}
Next we apply \eqref{eq:err5_Els} in \eqref{eq:err6_Els}, and use the standard bounds to get
\begin{equation}\label{eq:err7_Els}
\begin{aligned} 
\frac{\nuplus}{2} \left( \|\nabla \V{e}_{k} \|^2 + \|\nabla \Vt{e}_{k} \|^2 \right) & + 
\frac{\gamma}{2} \left( \|\divergence \V{e}_{k} \|^2 + \|\divergence \Vt{e}_{k} \|^2\right) + \frac{\|\nabla \delta_{k} \|^2_{\Luinvmap} - \|\nabla \delta_{k-1} \|^2_{\Luinvmap}}{2} \\ 
& \le \Mbd \|\nabla \V{w} \|\, \|\nabla \V{D}_{k-1} \|\, \|\nabla \Vt{e}_{k} \| + |\numinus|\, \|\nabla \V{D}_{k-1} \|\, \|\nabla \Vt{e}_{k} \| \\
& \le \left(\Mbd \Nbd_1 + |\numinus| \right) \|\nabla \V{D}_{k-1} \|\, \|\nabla \Vt{e}_{k} \|,
\end{aligned}
\end{equation}
and
\begin{equation}\label{eq:err8_Els}
\begin{aligned} 
\frac{\nuplus}{2} \left( \|\nabla \V{D}_{k} \|^2 + \|\nabla \Vt{D}_{k} \|^2 \right) & + 
\frac{\gamma}{2} \left( \|\divergence \V{D}_{k} \|^2 + \|\divergence \Vt{D}_{k} \|^2\right) + \frac{\|\nabla r_{k} \|^2_{\Lbinvmap} - \|\nabla r_{k-1} \|^2_{\Lbinvmap}}{2} \\ 
& \le \left(\Mbd \Nbd_1 + |\numinus| \right) \|\nabla \V{e}_{k-1} \|\, \|\nabla \Vt{D}_{k} \|.
\end{aligned}
\end{equation}
Adding the last two inequalities and applying Young's inequality produces
\begin{equation}\label{eq:err9_Els}   
\begin{aligned}
\frac{\nuplus}{2} \| (\nabla \V{e}_{k},\nabla \V{D}_{k}) \|^2 +  \frac{\nuplus}{4} \| (\nabla \Vt{e}_{k}, \nabla \Vt{D}_{k}) \|^2
& + \frac{\gamma}{2}\left( \|(\divergence \Vt{e}_{k},\divergence \Vt{D}_{k}) \|^2 + \|(\divergence \V{e}_{k},\divergence \V{D}_{k}) \|^2 \right) \\
& + \frac{ \|\nabla \delta_{k} \|^2_{\Luinvmap} +  \|\nabla r_{k} \|^2_{\Lbinvmap}}{2} \\
& \le \frac{ \|\nabla \delta_{k-1} \|^2_{\Luinvmap} +  \|\nabla r_{k-1} \|^2_{\Lbinvmap}}{2} + \nuplus \Lambda_3^2 \| (\nabla \V{e}_{k-1},\nabla \V{D}_{k-1}) \|^2 .
\end{aligned}
\end{equation}
Assuming that the small data condition \eqref{eq:smalldata_Els} holds, we can conclude that
\begin{equation}\label{eq:err10_Els}   
\begin{aligned}
\frac{\nuplus}{2} \| (\nabla \V{e}_{k},\nabla \V{D}_{k}) \|^2 +  \frac{\nuplus}{4} \| (\nabla \Vt{e}_{k}, \nabla \Vt{D}_{k}) \|^2
& + \frac{\gamma}{2}\left( \|(\divergence \Vt{e}_{k},\divergence \Vt{D}_{k}) \|^2 + \|(\divergence \V{e}_{k},\divergence \V{D}_{k}) \|^2 \right) \\
& + \frac{ \|\nabla \delta_{k} \|^2_{\Luinvmap} +  \|\nabla r_{k} \|^2_{\Lbinvmap}}{2} \\
& \le \frac{ \|\nabla \delta_{0} \|^2_{\Luinvmap} +  \|\nabla r_{0} \|^2_{\Lbinvmap}}{2} + \nuplus \Lambda_3^2 \| (\nabla \V{e}_{0},\nabla \V{D}_{0}) \|^2 \\
& = \frac{ \|\nabla \pplus \|^2_{\Luinvmap} +  \|\nabla \pminus \|^2_{\Lbinvmap}}{2} + \nuplus \Lambda_3^2 \| (\nabla \V{w},\nabla \V{z}) \|^2.
\end{aligned}
\end{equation}
The bound \eqref{eq:err10_Els} in turn implies uniform boundedness of the solution, as in \eqref{eq:err9}. Then, \eqref{eq:ErrConv_Els} is deduced easily from Lemma~\ref{lem:SeqConv0}. The geometric convergence of our approximation also follows easily from the inf-sup condition and Lemma \ref{ContractivitySequence}.
\end{proof}
\begin{remark}
In the classical formulation of the MHD system \eqref{eq:MHD_U}-\eqref{eq:MHD_Bmass}, the velocity $\V{u}$ and the magnetic field $\V{B}$ are coupled via two nonlinear terms in each equation. On the other hand, in the Els{\"a}sser reformulation \eqref{eq:Els1}-\eqref{eq:mass_Els2}, the coupling between $\V{w}$ and $\V{z}$ is through one nonlinear and one linear term. In theory, this allows for the construction of iterative methods with improved properties, assuming homogeneous Dirichlet boundary conditions for both $\V{u}$ and $\V{B}$. However, when the boundary conditions for these fields are not of the same type, determining the physically correct expressions for $\V{w}$ and $\V{z}$ becomes challenging.   
\end{remark}
\section{Numerical experiments}\label{sec:Numerics}
We present several numerical experiments to verify our theoretical results and to test the effectiveness of using FreeFem++ \cite{Hec2012} software. 
We consider the $\VX_h=\V{P}_2^d$,\, $Q_h=P_1$ finite element pairs in all computations, using direct solvers for 2D problems. 
The stopping criterion is taken as 
$$\max\left \lbrace \dfrac{\|\overrightarrow{X}_{p,k}-\overrightarrow{X}_{p,k-1}\|_{\ell_2}}{\| \overrightarrow{X}_{p,k} \|_{\ell_2}}, \dfrac{\| \overrightarrow{X}_{\lambda,k}-\overrightarrow{X}_{\lambda,k-1}\|_{\ell_2}}{\| \overrightarrow{X}_{\lambda,k} \|_{\ell_2}} \right \rbrace \leq 10^{-6}.$$ 
In some tests, we specify the nondimensional parameters ${\rm Re} = \dfrac{1}{\nu}$ and ${\rm Rm} = \dfrac{1}{\mu}$. Finally, in all runs, we set $\gamma=1$.
\subsection{Convergence tests}\label{sec:Convergence}
In a unit square $\Omega=(0,1)^2$, we consider the following manufactured solution of \eqref{eq:MHD_U}-\eqref{eq:MHD_Bmass}:
\begin{align*}
\V{u} &= \pi \left(\sin^2(\pi x) \sin(2\pi y), -\sin(2\pi x) \sin^2(\pi y) \right), \, p = \cos(\pi x) \cos(\pi y), \\
\V{B} &= \left(\sin(\pi x) \cos(\pi y), -\cos(\pi x) \sin(\pi y) \right), \, \lambda = \sin(\pi x) \sin(\pi y),
\end{align*}
where the source terms $\V{f}$ and $\V{g}$ are computed accordingly. Moreover, we took ${\rm Re}=1$, ${\rm Rm}=2$, and $\kappa=0.5$. The Tables \ref{tab:vel_mag_convergence}-\ref{tab:div_press_convergence} and \ref{tab:Els_convergence}-\ref{tab:Els_div_press_convergence} of errors and the corresponding convergence rates yield the expected results.
\begin{table}[htbp]
\centering
\caption{Part 1: Errors and rates for 2D manufactured solution of Algorithm \ref{algorithm:SIVS}}
\label{tab:vel_mag_convergence}
\resizebox{0.9\textwidth}{!}{
\begin{tabular}{|c|c|c|c|c|c|c|c|c|}
\toprule
$h$ & $\|\mathbf{u} - \mathbf{u}_h\|_{L^2}$ & Rate & $\|\mathbf{u} - \mathbf{u}_h\|_{H^1}$ & Rate & $\|\mathbf{B} - \mathbf{B}_h\|_{L^2}$ & Rate & $\|\mathbf{B} - \mathbf{B}_h\|_{H^1}$ & Rate \\
\hline
0.14  & 5.80e-03 & -    & 4.02e-01 & -    & 4.07e-04 & -    & 3.04e-02 & -    \\
\hline 
0.07  & 6.97e-04 & 3.06 & 1.02e-01 & 1.98 & 5.01e-05 & 3.02 & 7.63e-03 & 1.99 \\
\hline
0.035  & 8.61e-05 & 3.02 & 2.56e-02 & 1.99 & 6.24e-06 & 3.01 & 1.91e-03 & 2.00 \\
\hline
0.0177  & 1.07e-05 & 3.00 & 6.42e-03 & 2.00 & 7.79e-07 & 3.00 & 4.78e-04 & 2.00 \\
\hline
0.0088 & 1.34e-06 & 3.00 & 1.60e-03 & 2.00 & 9.73e-08 & 3.00 & 1.19e-04 & 2.00 \\
 \hline
\bottomrule
\end{tabular}%
}
\end{table}
\begin{table}[htbp]
\centering
\caption{Part 2: Errors and rates for 2D manufactured solution of Algorithm \ref{algorithm:SIVS}}
\label{tab:div_press_convergence}
\resizebox{0.9\textwidth}{!}{%
\begin{tabular}{|c|c|c|c|c|c|c|c|c|}
\toprule
$h$ & $\|\nabla \cdot \mathbf{u}_h\|_{L^2}$ & Rate & $\|\nabla \cdot \mathbf{B}_h\|_{L^2}$ & Rate & $\|p - p_h\|_{L^2}$ & Rate & $\|\lambda - \lambda_h\|_{L^2}$ & Rate \\
\hline
0.14  & 2.62e-01 & -    & 2.20e-02 & -    & 2.64e-02 & -    & 4.39e-03 & -    \\ \hline
0.07  & 6.89e-02 & 1.93 & 5.57e-03 & 1.98 & 2.36e-03 & 3.48 & 1.04e-03 & 2.08 \\ \hline
0.035  & 1.75e-02 & 1.98 & 1.40e-03 & 1.99 & 3.11e-04 & 2.92 & 2.58e-04 & 2.01 \\ \hline
0.0177  & 4.39e-03 & 1.99 & 3.50e-04 & 2.00 & 6.60e-05 & 2.24 & 6.43e-05 & 2.00 \\ \hline
0.0088 & 1.10e-03 & 2.00 & 8.74e-05 & 2.00 & 1.61e-05 & 2.03 & 1.61e-05 & 2.00 \\ \hline
\bottomrule
\end{tabular}%
}
\end{table}
\begin{table}[htbp]
\centering
\caption{Part 3: Errors and rates for 2D manufactured solution of Algorithm \ref{algorithm:SIVS_Els}}
\label{tab:Els_convergence}
\resizebox{0.9\textwidth}{!}{%
\begin{tabular}{|c|c|c|c|c|c|c|c|c|}
\hline
$h$ & $\|\mathbf{w} - \mathbf{w}_h\|_{L^2}$ & Rate & $\|\mathbf{w} - \mathbf{w}_h\|_{H^1}$ & Rate & $\|\mathbf{z} - \mathbf{z}_h\|_{L^2}$ & Rate & $\|\mathbf{z} - \mathbf{z}_h\|_{H^1}$ & Rate \\
\hline
0.14   & 5.94e-03 & -    & 4.12e-01 & -    & 5.67e-03 & -    & 3.92e-01 & -    \\
\hline
0.07  & 7.15e-04 & 3.05 & 1.05e-01 & 1.98 & 6.81e-04 & 3.06 & 9.96e-02 & 1.98 \\
\hline
0.035  & 8.84e-05 & 3.02 & 2.63e-02 & 1.99 & 8.41e-05 & 3.02 & 2.50e-02 & 1.99 \\
\hline
0.0177  & 1.10e-05 & 3.00 & 6.58e-03 & 2.00 & 1.05e-05 & 3.00 & 6.26e-03 & 2.00 \\
\hline
0.0088 & 1.43e-06 & 2.95 & 1.65e-03 & 2.00 & 1.37e-06 & 2.94 & 1.57e-03 & 2.00 \\
\hline
\end{tabular}
}
\end{table}
\begin{table}[htbp]
\centering
\caption{Part 4: Errors and rates for 2D manufactured solution of Algorithm \ref{algorithm:SIVS_Els}}
\label{tab:Els_div_press_convergence}
\resizebox{0.9\textwidth}{!}{%
\begin{tabular}{|c|c|c|c|c|c|c|c|c|}
\hline
$h$ & $\|\nabla \cdot \mathbf{w}_h\|_{L^2}$ & Rate & $\|\nabla \cdot \mathbf{z}_h\|_{L^2}$ & Rate & $\|p^+ - p^+_h\|_{L^2}$ & Rate & $\|p^- - p^-_h\|_{L^2}$ & Rate \\
\hline
0.14   & 2.70e-01 & -    & 2.55e-01 & -    & 2.69e-02 & -    & 2.62e-02 & -    \\
\hline
0.07  & 7.08e-02 & 1.93 & 6.72e-02 & 1.92 & 2.75e-03 & 3.29 & 2.15e-03 & 3.61 \\
\hline
0.035  & 1.80e-02 & 1.98 & 1.71e-02 & 1.98 & 4.73e-04 & 2.54 & 1.91e-04 & 3.49 \\
\hline
0.0177  & 4.50e-03 & 1.99 & 4.28e-03 & 1.99 & 1.11e-04 & 2.09 & 2.42e-05 & 2.98 \\
\hline
0.0088 & 1.13e-03 & 2.00 & 1.07e-03 & 2.00 & 2.76e-05 & 2.01 & 5.27e-06 & 2.20 \\
\hline
\end{tabular}%
}
\end{table}
\subsection{2D Hartmann flow}\label{subsec:2DHartmann}
Next, we test both of our Algorithms \ref{algorithm:SIVS} and \ref{algorithm:SIVS_Els} on a 2D Hartmann flow. The analytic solutions for the velocity, magnetic field, and pressure are
\begin{align*}
    \mathbf{u}(x,y) &= (u(y), 0), \,    \mathbf{B}(x,y) = (B(y), 1), \, p(x,y)= -0.1x - \kappa \frac{B^2(y)}{2},
\end{align*}
where the horizontal velocity and magnetic field profiles are given by:
\begin{align*}
    u(y) &= \frac{0.1 {\rm Re}}{{\rm Ha}  \cdot \tanh({\rm Ha} )} \left( 1 - \frac{\cosh(y {\rm Ha} )}{\cosh({\rm Ha} )} \right), \, 
    B(y) = \frac{0.1}{\kappa} \left( \frac{\sinh(y {\rm Ha} )}{\sinh({\rm Ha} )} - y \right),
\end{align*}
and the Hartmann number is defined as ${\rm Ha}  = \sqrt{ \kappa\,{\rm Re} {\rm Rm}}$. The problem is solved using Dirichlet boundary conditions for two different values of ${\rm Ha}$: ${\rm Ha} =1$ for $\kappa={\rm Re}={\rm Rm}=1$, and ${\rm Ha} =10$ for $\kappa={\rm Re}=10$, ${\rm Rm}=1$. The graphs of the exact and approximate solutions in Figure \ref{fig:2DHartmann} indicate excellent accuracy.
\begin{figure}
    \centering
    \includegraphics[width=0.9\linewidth]{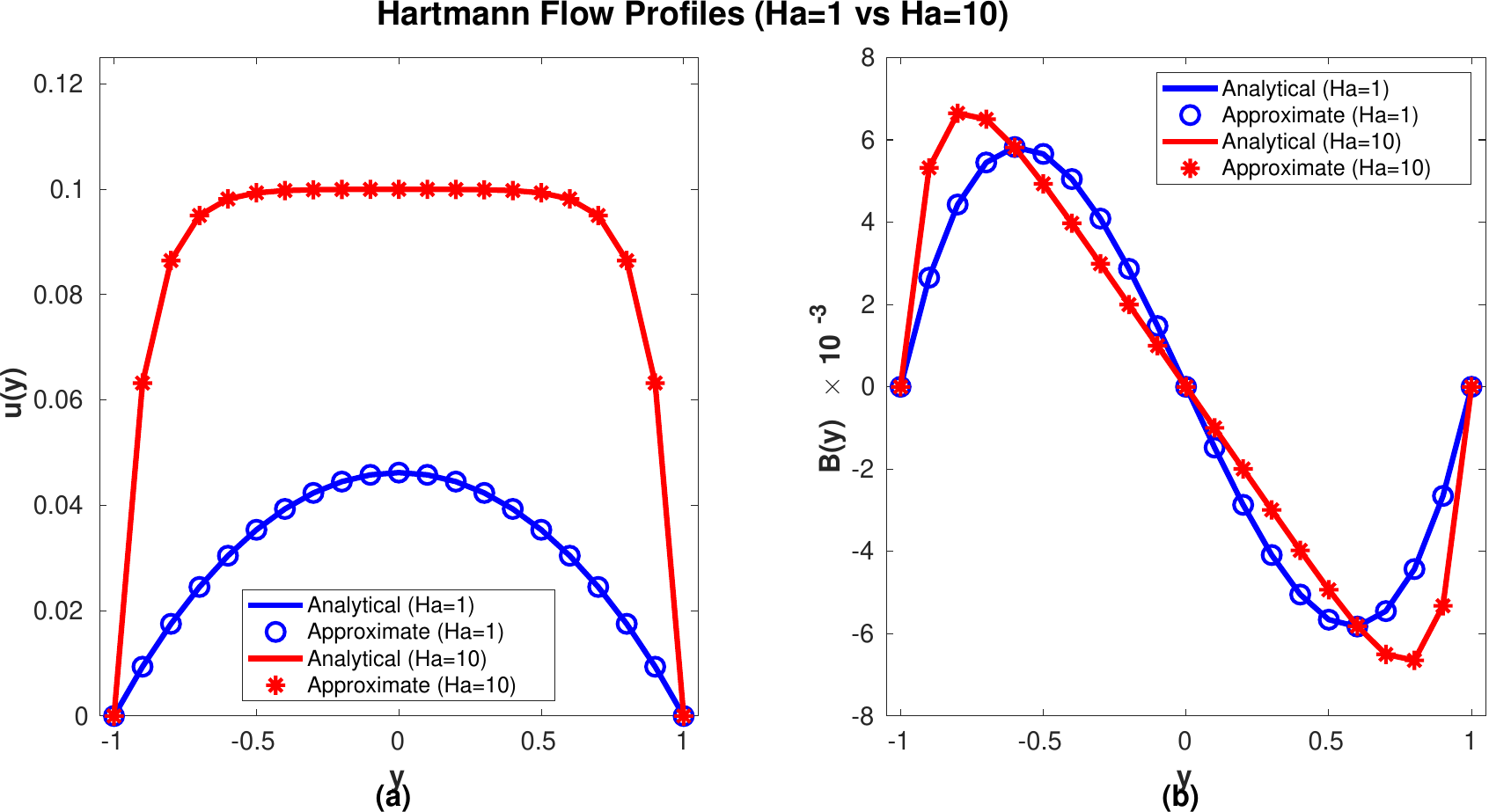}
    \caption{Graphs of $u_1$ versus $y$, and $B_1$ versus $y$ along $x=0.5$ line}
    \label{fig:2DHartmann}
\end{figure}
\subsection{3D Hartmann flow}\label{subsec:3DHartmann}
In this section, we test Algorithm \ref{algorithm:SIVS_Els} on a 3D Hartmann flow. The exact analytical solution for the in a rectangular duct $\Omega = [0, L] \times [-y_0, y_0] \times [-z_0, z_0]$, $y_0, z_0 \ll L$,  subjected to a constant transverse magnetic field $B_d = (0, 1, 0)$, is given as follows:
\begin{align*}
\V{u} &= (u(y,z), 0, 0), \\
\V{B} &= (B(y,z), 1, 0), \\
p &= -Gx - S_c \frac{B^2(y,z)}{2} + p_0.
\end{align*}
The velocity profile $u(y,z)$ and the induced magnetic field profile $B(y,z)$ are expressed using infinite Fourier series in the $z$-direction:
\begin{align*}
u(y,z) &= -\frac{1}{2} G R_e (z^2 - z_0^2) + \sum_{i=0}^{\infty} u_i(y) \cos(\lambda_i z), \\
B(y,z) &= \sum_{i=0}^{\infty} b_i(y) \cos(\lambda_i z).
\end{align*}
The Fourier coefficients $u_i(y)$ and $b_i(y)$ depend on the $y$-coordinate and are defined as:
\begin{align*}
u_i(y) &= A_i \cosh(p_1 y) + B_i \cosh(p_2 y) \\
b_i(y) &= \frac{1}{R_e S_c} \left( A_i \frac{\lambda_i^2 - p_1^2}{p_1} \sinh(p_1 y) + B_i \frac{\lambda_i^2 - p_2^2}{p_2} \sinh(p_2 y) \right)
\end{align*}
The eigenvalues $\lambda_i$ and the roots $p_1$ and $p_2$ governing the boundary layers are:
\begin{align*}
\lambda_i = \frac{(2i+1)\pi}{2z_0} \text{ and }
p_{1,2}^2 = \lambda_i^2 + \frac{Ha^2}{2} \pm Ha \sqrt{\lambda_i^2 + \frac{Ha^2}{4}}.
\end{align*}
$A_i$ and $B_i$ are determined using the boundary value $u_i(y_0)$ and the denominator $\gamma_i$:
\begin{align*}
u_i(y_0) &= \frac{-2 G R_e}{\lambda_i^3 z_0} \sin(\lambda_i z_0), \\
\gamma_i &= p_2(\lambda_i^2 - p_1^2) \sinh(p_1 y_0) \cosh(p_2 y_0) - p_1(\lambda_i^2 - p_2^2) \sinh(p_2 y_0) \cosh(p_1 y_0), \\
A_i &= \frac{-p_1(\lambda_i^2 - p_2^2)}{\gamma_i} u_i(y_0) \sinh(p_2 y_0), \\
B_i &= \frac{p_2(\lambda_i^2 - p_1^2)}{\gamma_i} u_i(y_0) \sinh(p_1 y_0).
\end{align*}
The problem is solved using Dirichlet boundary conditions for ${\rm Ha} =0.01$ corresponding to $\kappa={\rm Re}=1$, ${\rm Rm}=1e-4$. The graphs of the exact and approximate solutions in Figure \ref{fig:3DHartmann} show very good accuracy.
\begin{figure}
    \centering
    \includegraphics[width=0.8\linewidth]{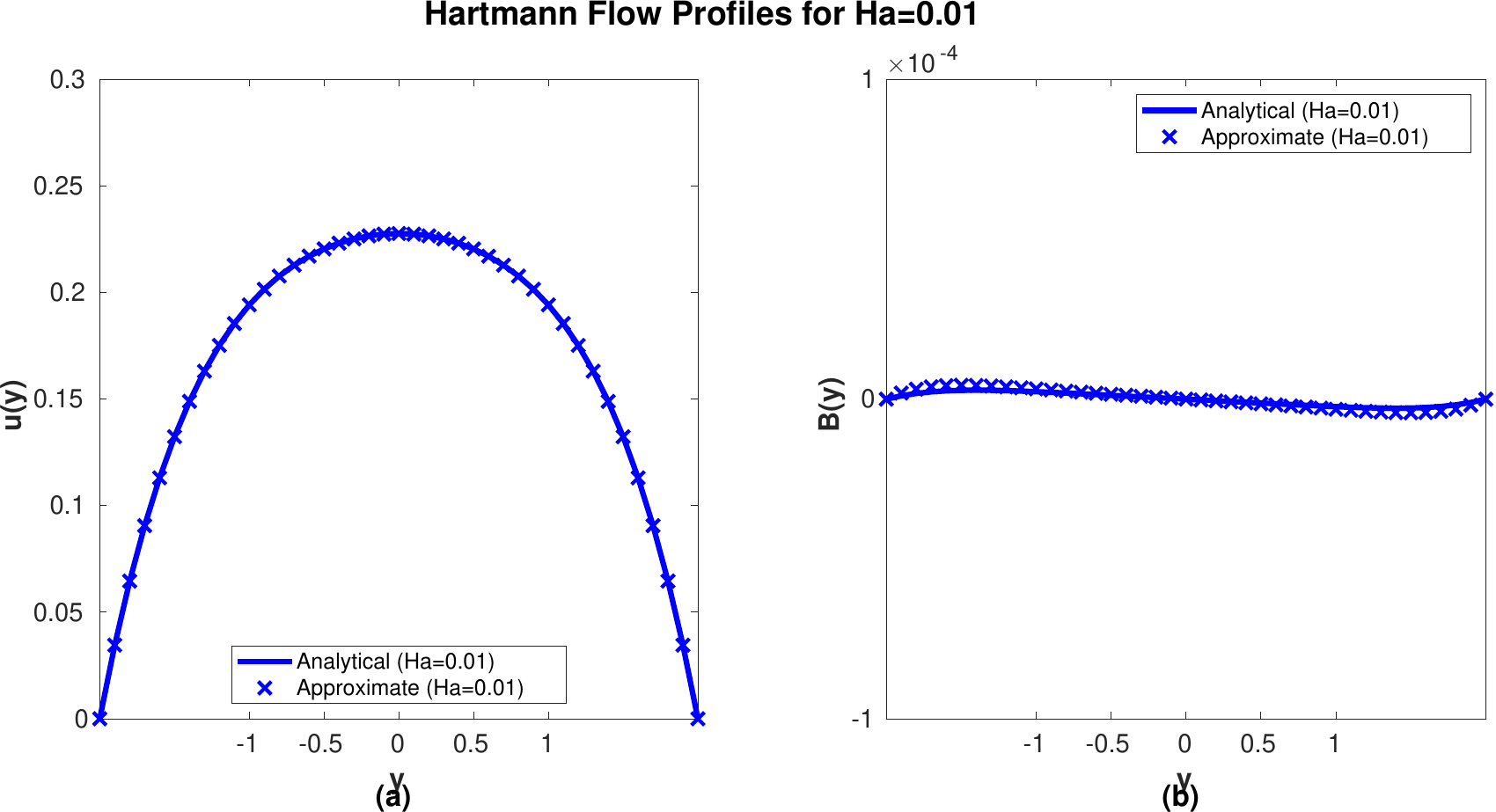}
    \caption{Graphs of $u_1$ versus $y$, and $B_1$ versus $y$ along $x=0.5$ line}
    \label{fig:3DHartmann}
\end{figure}
\subsection{2D MHD lid driven cavity flow}\label{subsec:2DCavity}
In this subsection, we test our Algorithms on a well-known 2D lid-driven cavity flow problem. The computational domain is $\Omega = (0,1)^2$, where the top lid moves in the positive $x$ direction at unit speed. The boundary conditions are taken to be no-slip along the remaining walls. To avoid solution irregularities at the upper corners, we consider a regularized initial data at the upper boundary, as in \cite{Frutos2016}.

We tested various values of ${\rm Re}, {\rm Rm}$ and the coupling number $\kappa$. The mesh is uniform of size $160 \times 160$. The runs with the smallest values of the parameters were initiated with zero, while the runs at higher values are initiated from the converged solution of the previous simulations.

The velocity streamlines superimposed on speed contours are shown in Figures \ref{fig:Cavity1}-\ref{fig:Cavity2}, which perfectly match the reference results \cite{XU2022105372}, except for the velocity streamlines at ${\rm Re}={\rm Rm}=1$ and $\kappa=5000$ case in Figure \ref{fig:Cavity2}. We ran this case with two different continuations. The first case was run with $\kappa=1,50,100,200,1000,2000,5000$, and 
the second case with $\kappa=1,50,100,200,1000,2000,3000,3500,4000,5000$, both giving the same solution.
Representative magnetic field lines are given in Figure \ref{fig:Cavity3}.
\begin{figure}[t!]
\centering
\begin{subfigure}[t]{0.33\textwidth}
  \centering
\includegraphics[scale=0.24]{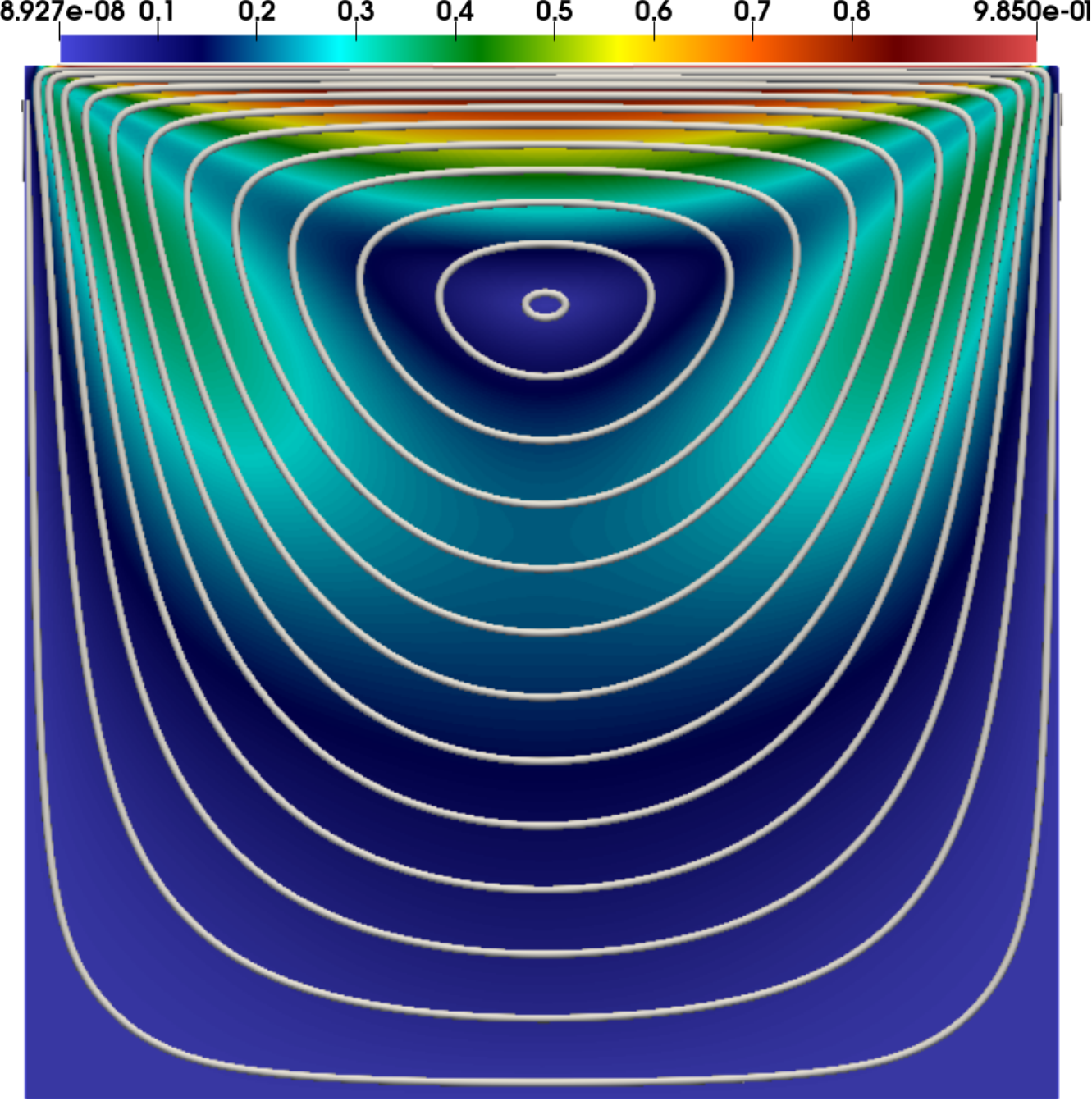}
\end{subfigure}%
\begin{subfigure}[t]{0.33\textwidth}
  \centering
\includegraphics[scale=0.24]{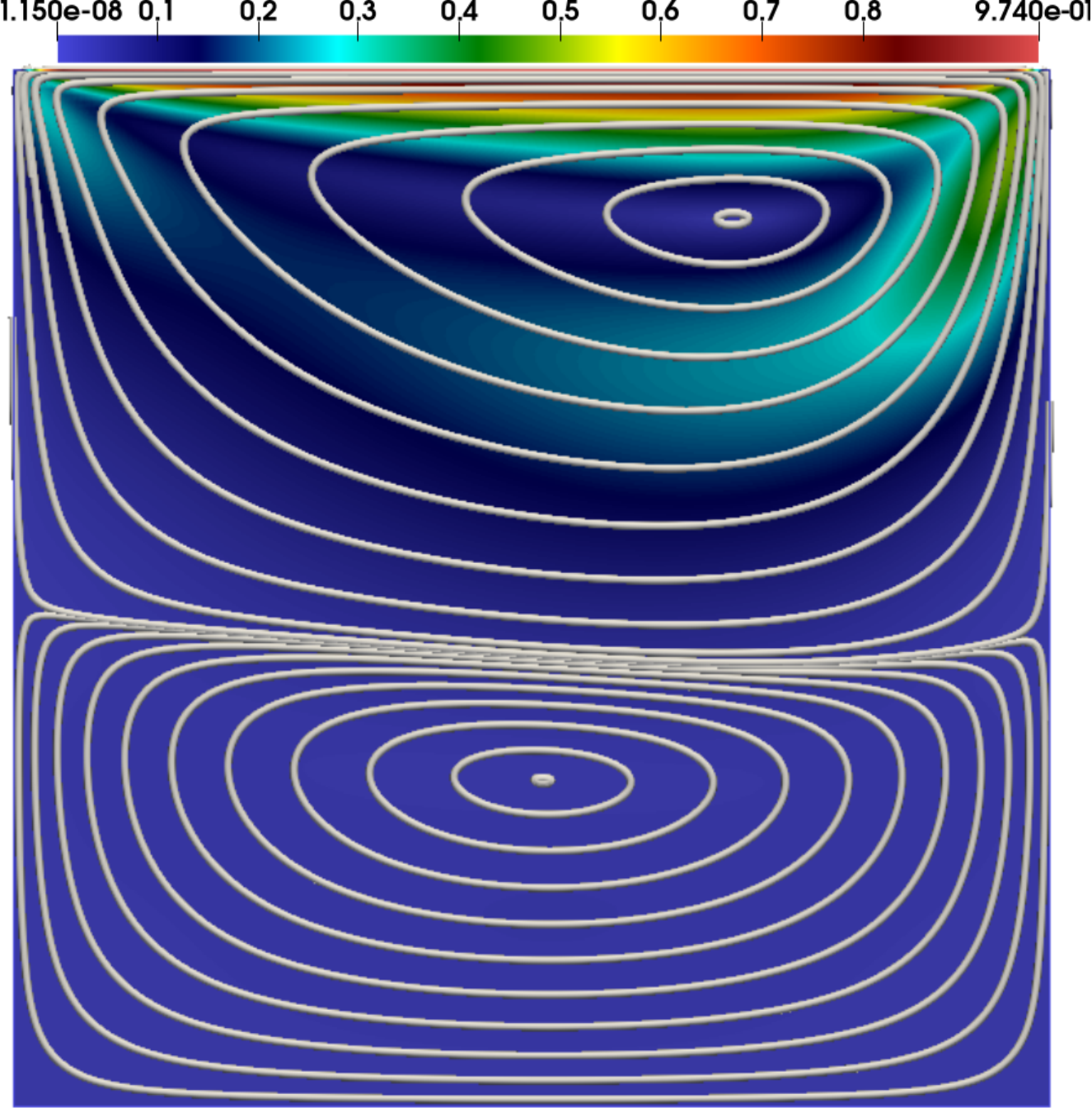}
\end{subfigure}%
\begin{subfigure}[t]{0.33\textwidth}
  \centering
\includegraphics[scale=0.24]{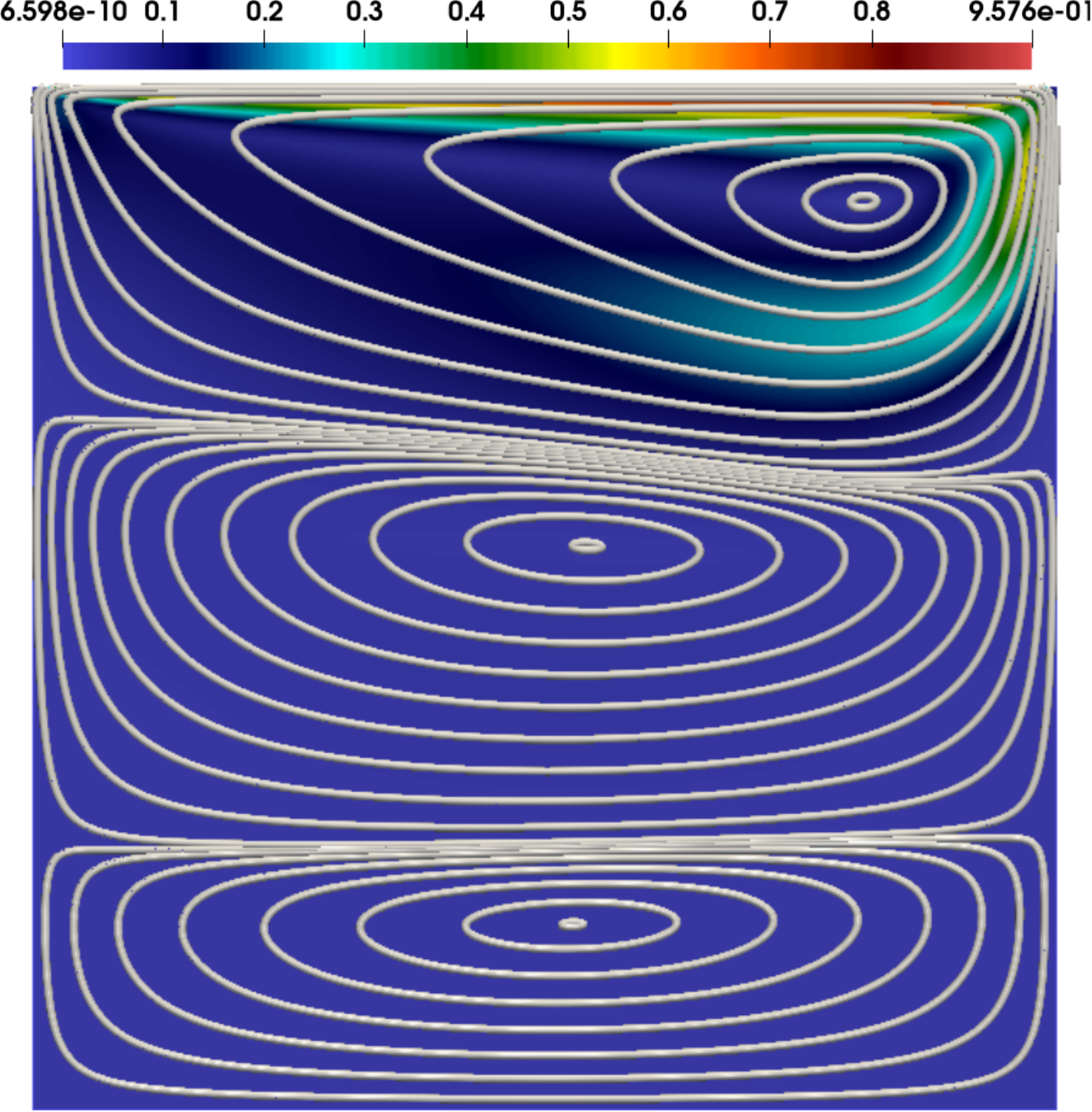}
\end{subfigure}
\caption{Velocity streamlines with ${\rm Rm}=1$ \& $\kappa=1$: ${\rm Re}=1$ (left), ${\rm Re}=100$ (middle), \& ${\rm Re}=500$ (right)}
\label{fig:Cavity1}
\end{figure}
\begin{figure}[ht!]
\centering
\begin{subfigure}[t]{0.33\textwidth}
  \centering
\includegraphics[scale=0.24]{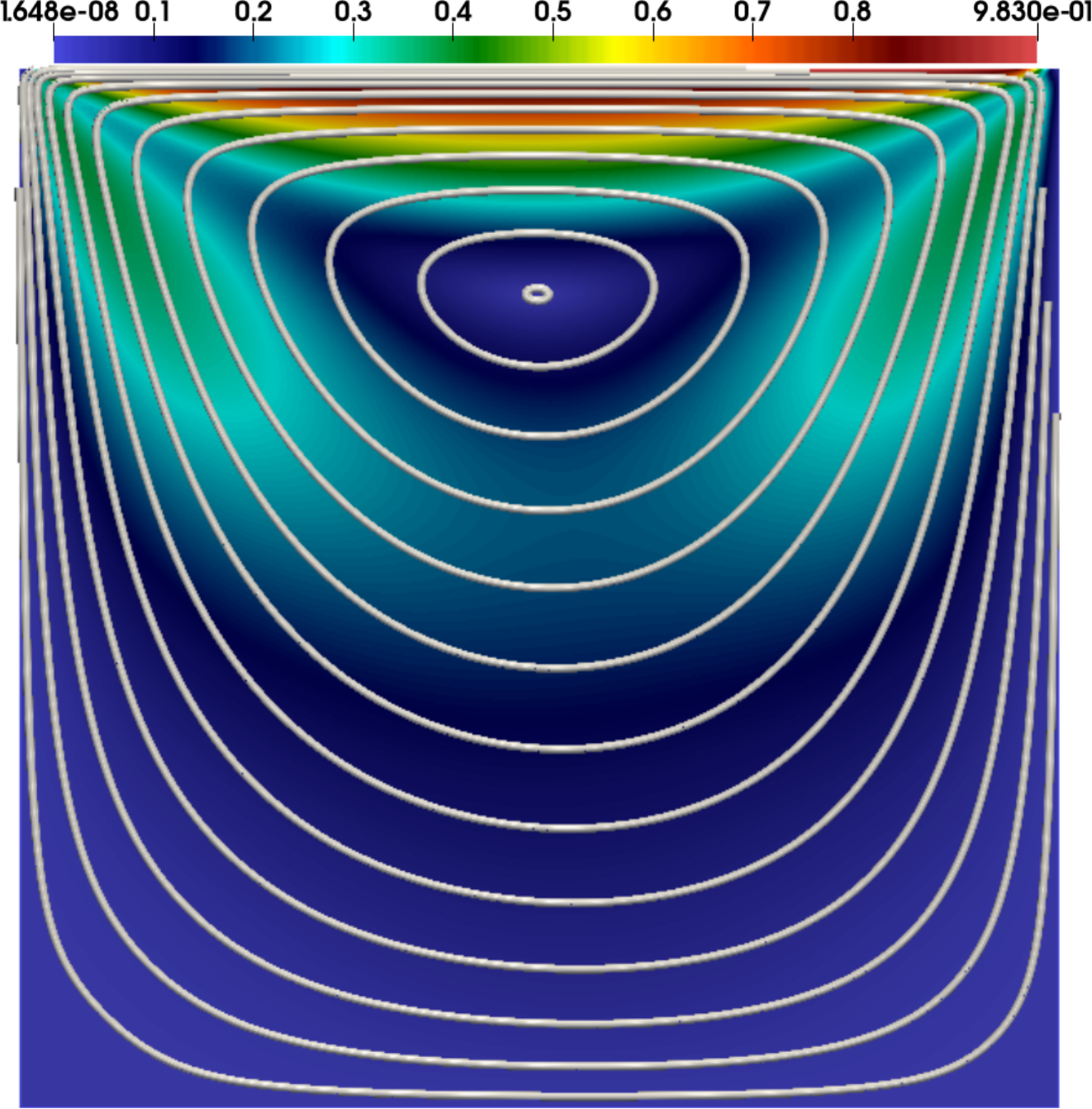}
\end{subfigure}%
\begin{subfigure}[t]{0.33\textwidth}
  \centering
\includegraphics[scale=0.24]{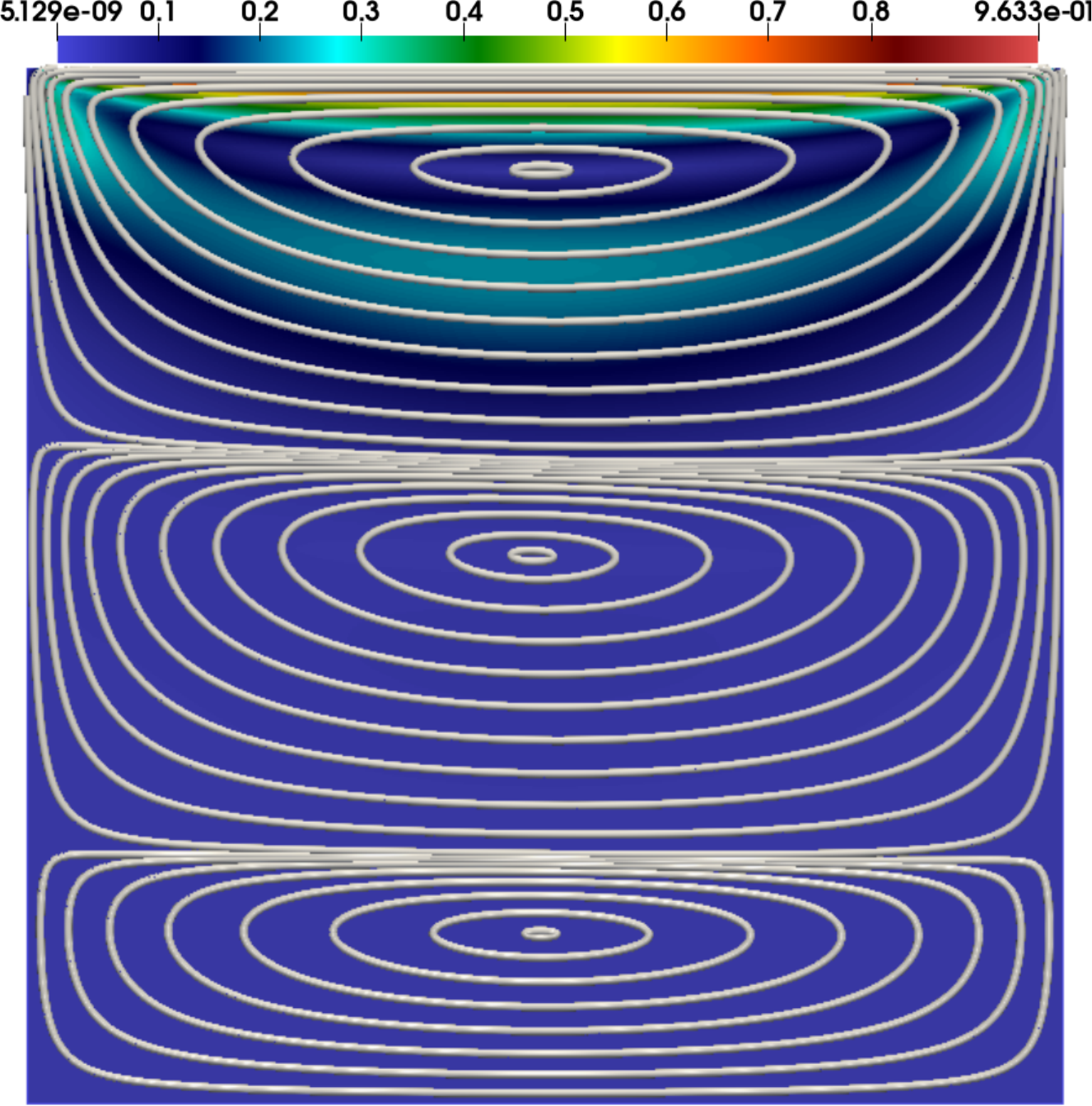}
\end{subfigure}%
\begin{subfigure}[t]{0.33\textwidth}
  \centering
\includegraphics[scale=0.24]{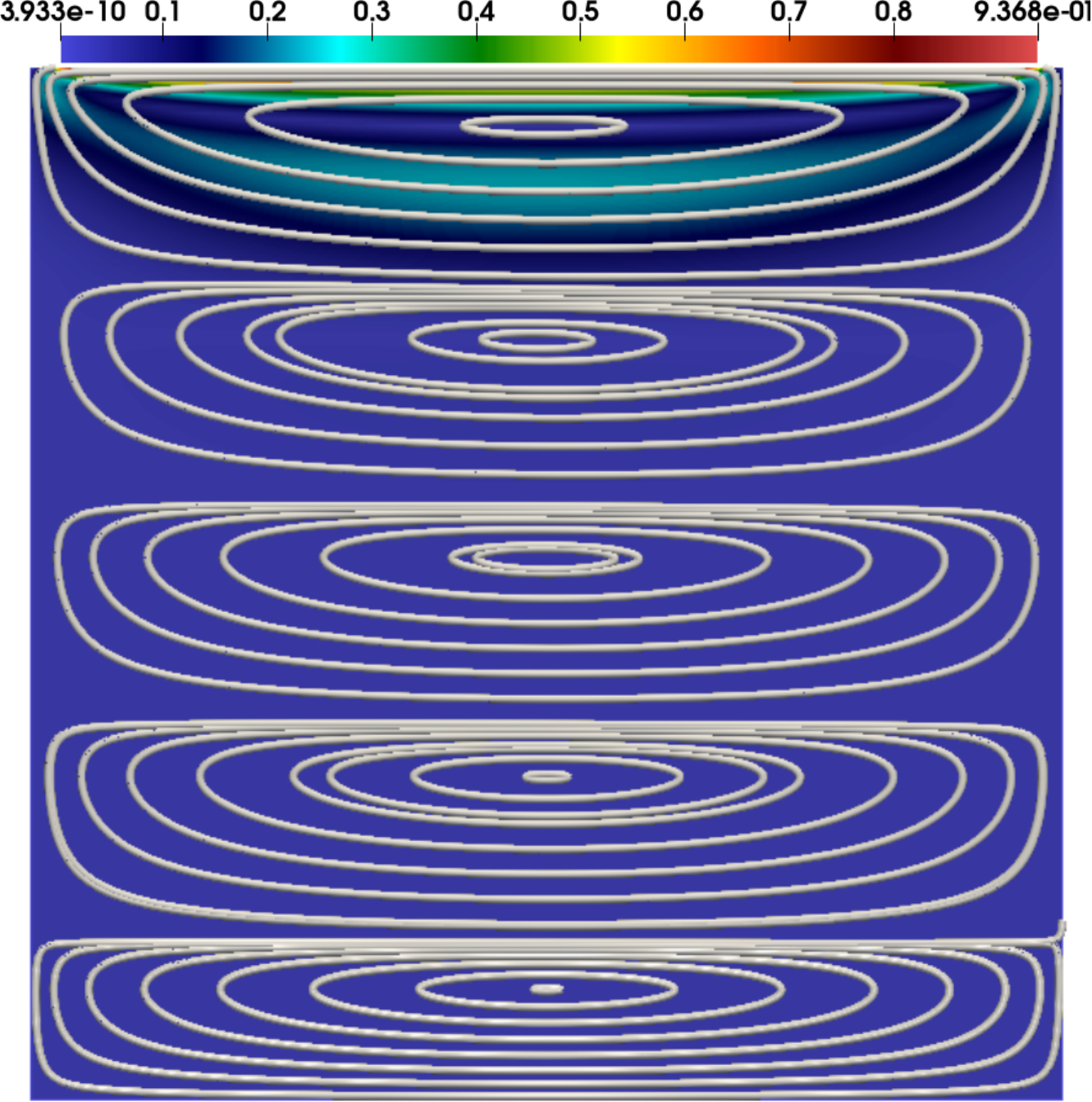}
\end{subfigure}
\caption{Velocity streamlines: ${\rm Re}=\kappa=1$, ${\rm Rm}=10$ (left);  ${\rm Re}={\rm Rm}=1$ \& $\kappa=500$ (middle), \& $\kappa=5000$ (right)}
\label{fig:Cavity2}
\end{figure}
\begin{figure}[ht!]
\centering
\begin{subfigure}[t]{0.33\textwidth}
  \centering
\includegraphics[scale=0.24]{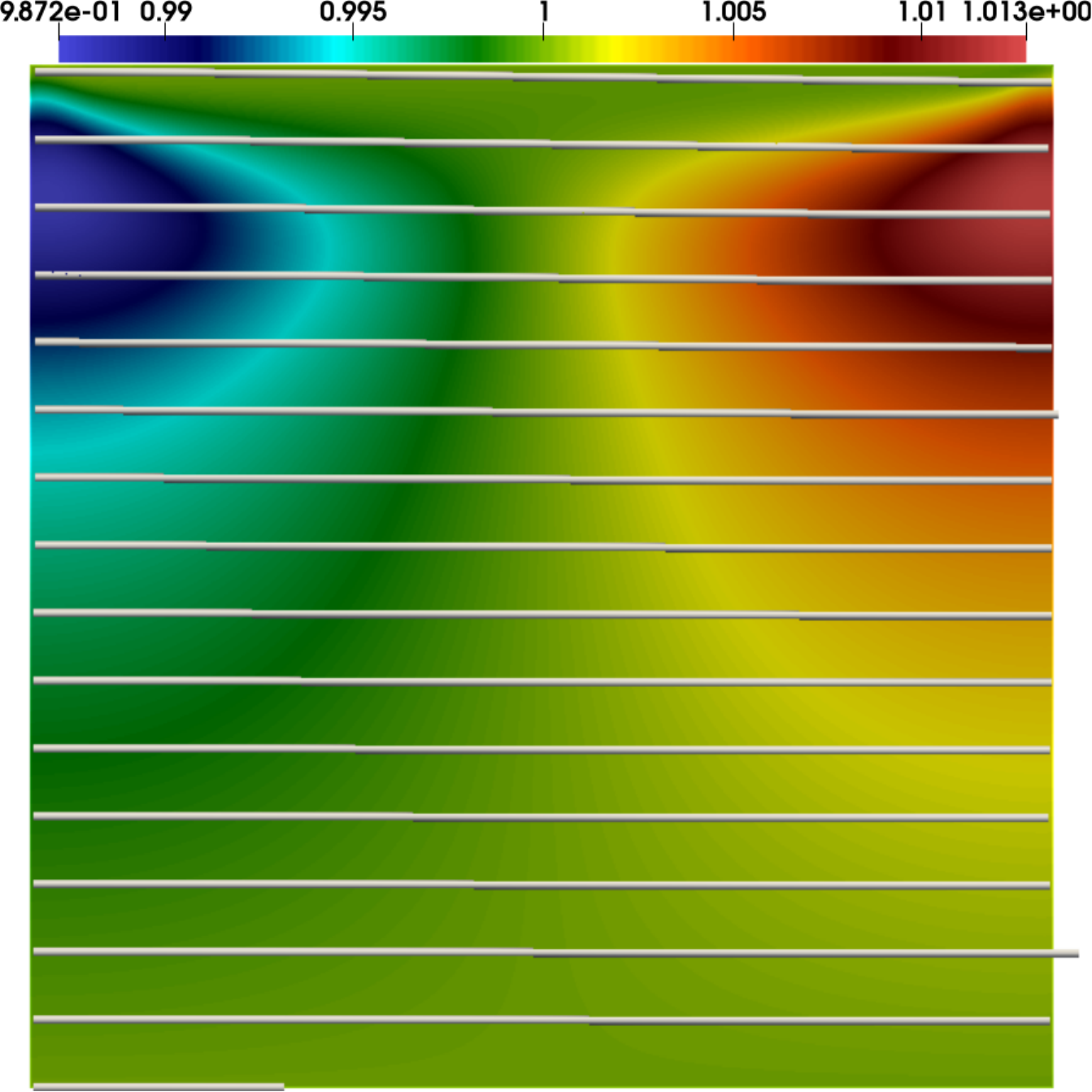}
\end{subfigure}%
\begin{subfigure}[t]{0.33\textwidth}
  \centering
\includegraphics[scale=0.24]{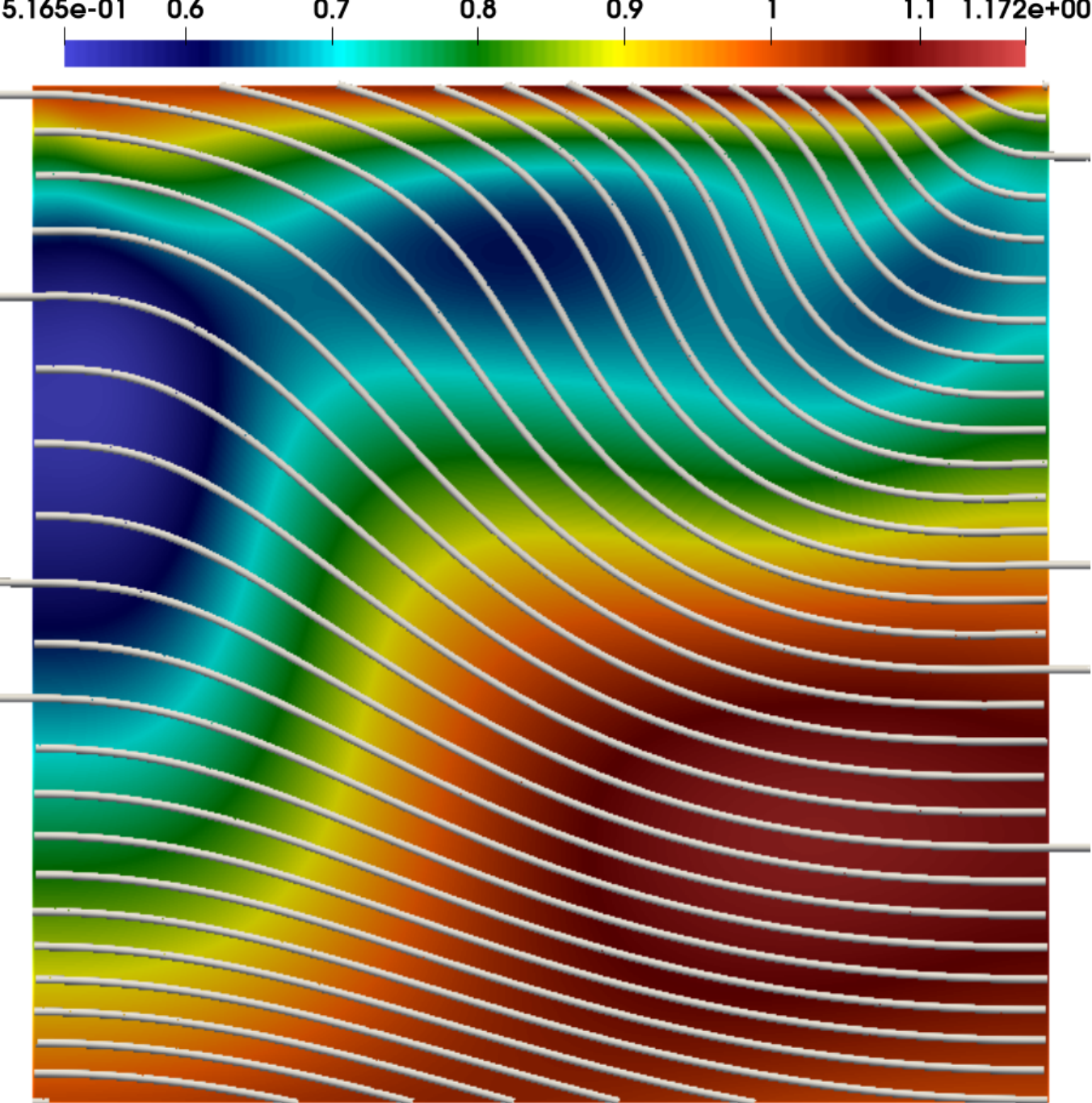}
\end{subfigure}%
\begin{subfigure}[t]{0.33\textwidth}
  \centering
\includegraphics[scale=0.24]{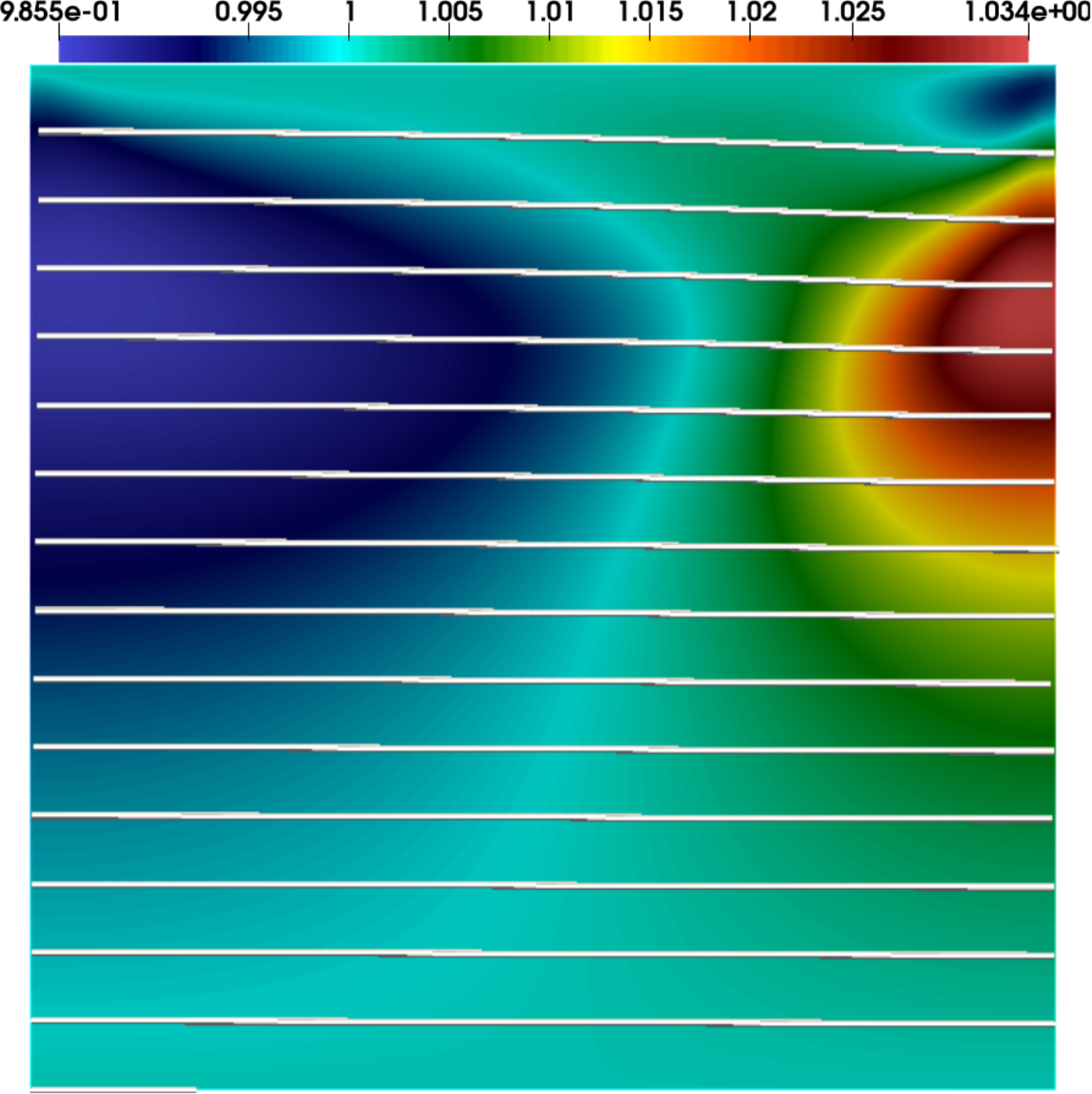}
\end{subfigure}
\caption{Magnetic field lines: ${\rm Re}={\rm Rm}=1$ \& $\kappa=5000$ (left);  ${\rm Re}=\kappa=1$ \& ${\rm Rm}=10$ (middle); ${\rm Re}=500$ \& ${\rm Rm}=\kappa=1$ (right)}
\label{fig:Cavity3}
\end{figure}
\subsection{2D MHD flow over a step}\label{subsec:2DStep}
This example is a classical flow over a step under a transverse magnetic field. The domain is $\Omega=(-0.25,0.75) \times (-0.125,0.125) \backslash (-0.25,0] \times (-0.125,0]$. 

The following boundary conditions are imposed:
\begin{equation}\label{eq:2DStep_MHD_BCs}
	\begin{array}{rcl}
	\V{u} &=& \left(-25.6(y-0.125)y\, ,\, 0\right) \quad \text{ on } x=-0.25, \\
    -\nu (\V{n} \cdot \nabla)\V{u} + p\V{n} &=& \V{0} \quad \text{ on }x=0.75, \\
	\V{u} &=& \V{0} \quad \text{ elsewhere}, \\
	\V{B} \times \V{n} &=& \V{B}_D \times \V{n} \quad \text{ on } \partial \Omega, \\
    \dfrac{\partial \V{B}}{\partial \V{n}} \cdot \V{n} & = & 0 \quad \text{ on } \partial \Omega,
	\end{array}
	\end{equation}
where $\V{B}_D = (1,0)^T$. \AT{We note that the last condition in \eqref{eq:2DStep_MHD_BCs} must be enforced in the Laplacian formulation of \eqref{eq:MHD_B} to ensure stability.}  Body forces are zero. The problem parameters are chosen as $\nu=10^{-2}$, $\mu=10^5$, with $\kappa=2.5\cdot 10^4$ and $\kappa=10^5$. The finite element mesh consisting of $229752$ triangles is used.

As expected from the reference solution, the streamlines are correctly captured, and pressure values drop past the step, cf. Fig. \ref{fig:2DStep1}. Moreover, we observe that the corner vertex gets damped more for an increasing value of the coupling parameter $\kappa$, cf. Fig. \ref{fig:2DStep2}.
\begin{figure}[t!]
\centering
\includegraphics[scale=0.3]{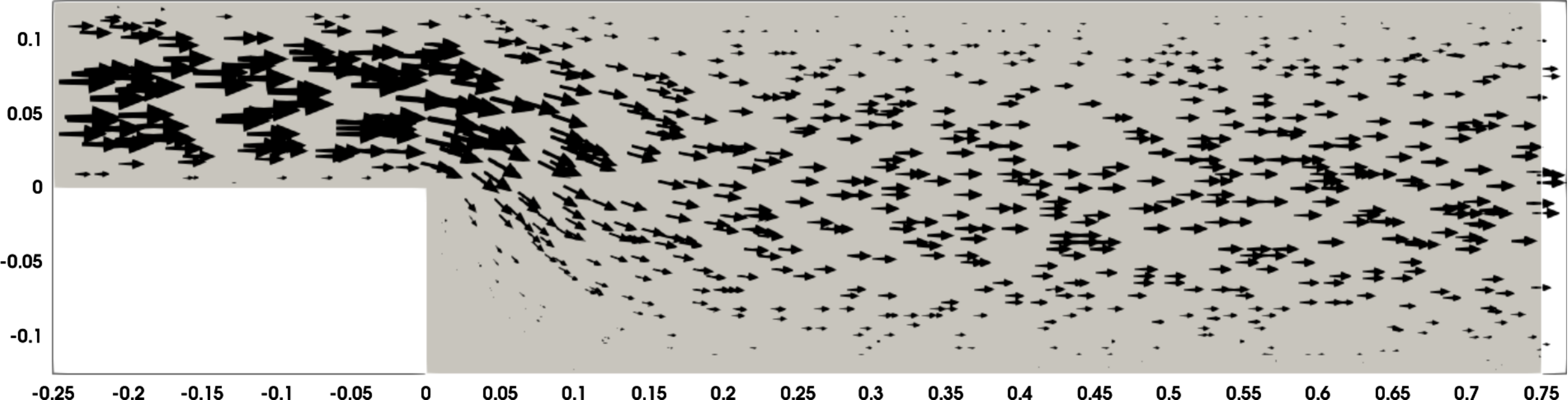} \\
\includegraphics[scale=0.3]{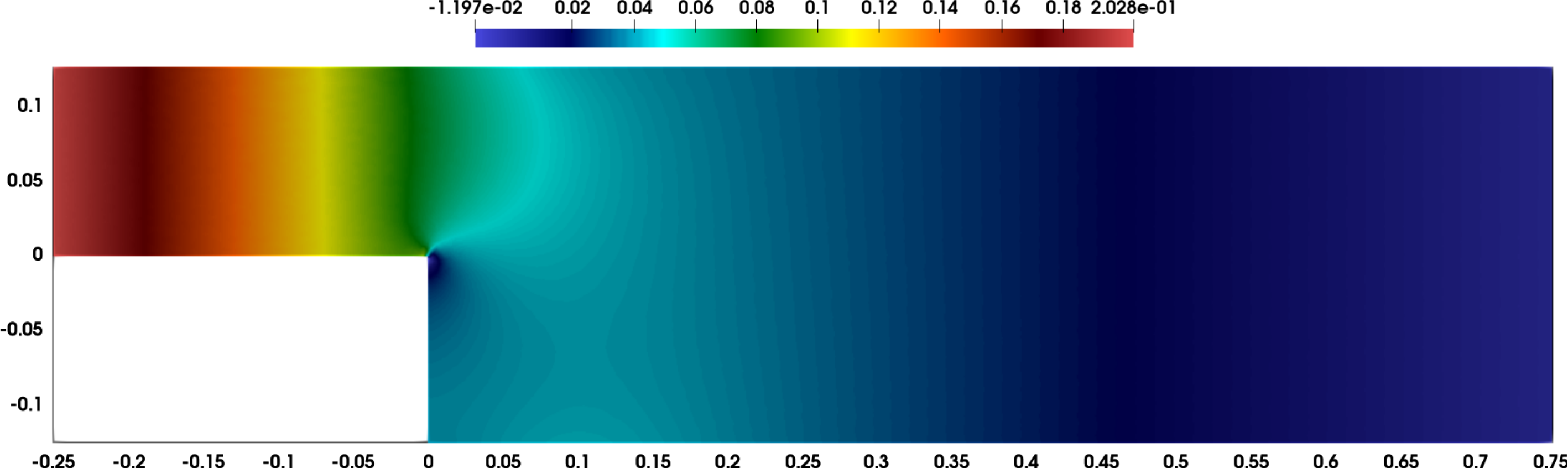}
\caption{Velocity field (top) and Pressure contours (bottom) for 2D flow over a step at $\kappa=2.5 \cdot 10^4$}
\label{fig:2DStep1}
\end{figure}
\begin{figure}[t!]
\centering
\includegraphics[scale=0.25,trim=6in 0in 0in 5in, clip]{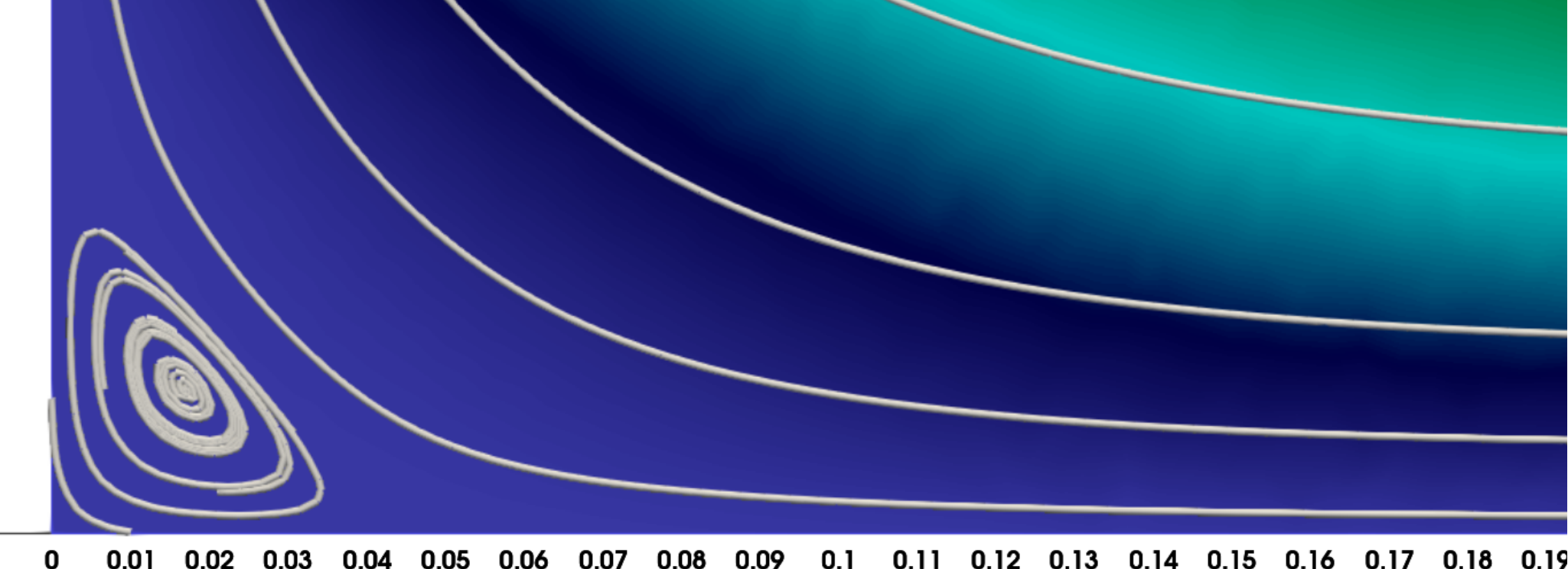} \\
\,\includegraphics[scale=0.25,trim=6.1in 0in 0in 4.9in, clip]{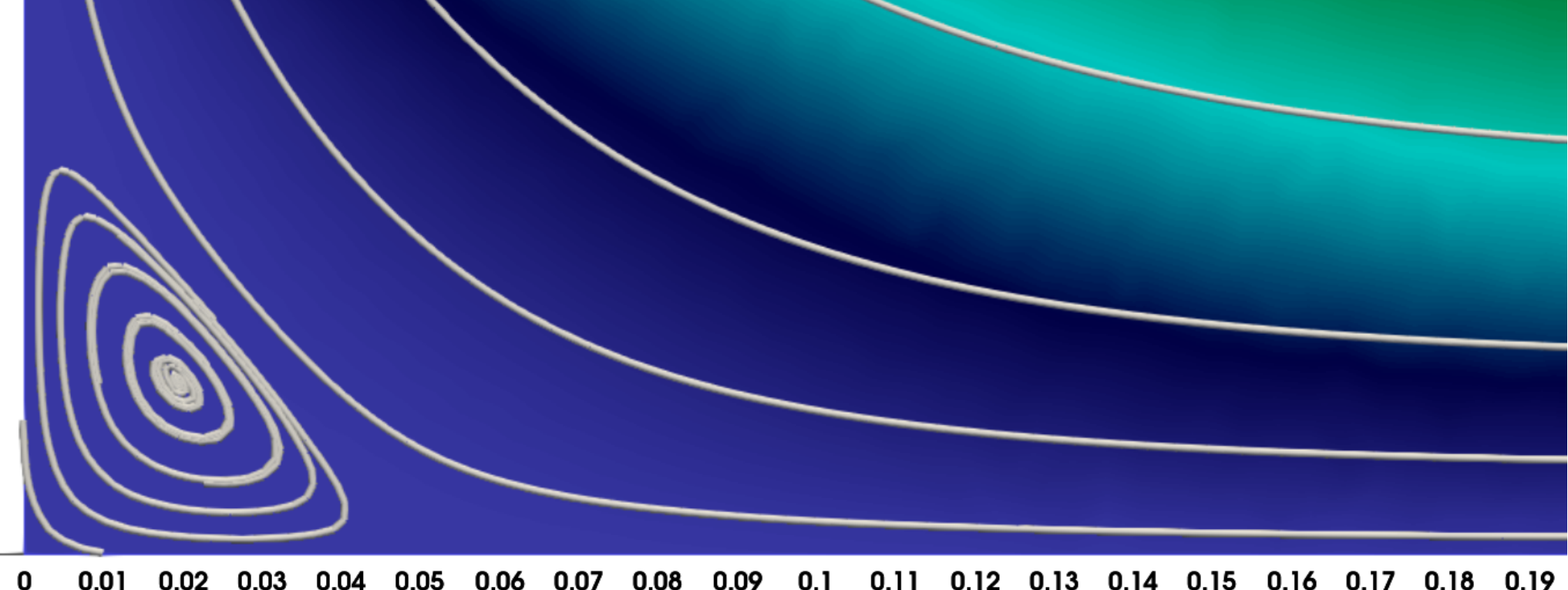}
\caption{Velocity streamlines zoomed in past the step: $\kappa=10^5$ (top) and $\kappa=2.5 \cdot 10^4$ (bottom)}
\label{fig:2DStep2}
\end{figure}
\section{Conclusion}\label{sec:Conclusion}
In this work, we first established the existence and uniqueness of the solution to the stationary MHD problem under suitable assumptions. We then analyzed the classical Picard iterative scheme and proved its convergence in the three-dimensional setting under the same conditions ensuring uniqueness of the continuous problem.

To overcome some implementation difficulties related to the strong coupling of the unknowns, we introduced a new iterative algorithm inspired by the IVS strategy. This approach allows for a decoupling of the main variables, leading to simpler and more efficient linear subproblems. We proved that this new iterative method converges toward the unique solution of the original MHD problem.

For each proposed algorithm, we explicitly derived and detailed the associated linear systems, which provides a practical framework for numerical implementation.

Furthermore, we extended our analysis to the Elsässer formulation of the MHD equations, showing that the proposed methodology can also be adapted to this alternative formulation.

Finally, several two-dimensional and three-dimensional numerical experiments were performed to validate the theoretical results and illustrate the efficiency and robustness of the proposed methods.

A natural continuation of this work concerns the unsteady MHD problem. Future investigations will focus on the extension of the proposed strategies to the time-dependent setting, with particular attention devoted to the analysis of temporal and spatial stability, as well as convergence properties of the fully discrete schemes.

\bibliographystyle{abbrv}
\bibliography{references}
\end{document}